\newtheorem{theorem}{Theorem}
\newtheorem{proposition}[theorem]{Proposition}
\newtheorem{lemma}[theorem]{Lemma}
\newtheorem{corollary}[theorem]{Corollary}
\newtheorem*{remark}{Remark}
\newtheorem*{theorem1}{Theorem 1.3}
\newtheorem*{theorem2}{Theorem 1.5}
\newtheorem*{corollary3}{Corollary 1.4}
\newtheorem{conjecture}{Conjecture}
\numberwithin{theorem}{section}
\numberwithin{conjecture}{section}
\numberwithin{figure}{section}
\theoremstyle{definition}
\newtheorem{example}{Example}[section]
\def\blfootnote{\xdef\@thefnmark{}\@footnotetext}
\title{Vexillary Grothendieck Polynomials via Bumpless Pipe Dreams}
\date{}
\author{Elena S. Hafner}
\begin{document}

\thispagestyle{empty}

\maketitle
\begin{abstract}
\noindent Recent work of Pechenik, Speyer, and Weigandt proved a formula for the degree of any Grothendieck polynomial.  A distinct formula for the degree of vexillary Grothendieck polynomials was proven by Rajchgot, Robichaux, and Weigandt.  We give a new proof of Pechenik, Speyer, and Weigandt's formula in the special case of vexillary permutations and characterize the set of bumpless pipe dreams which contribute maximal degree terms to the Grothendieck polynomial in this case.  Furthermore, we use this characterization to draw connections between the Pechenik-Speyer-Weigandt and Rajchgot-Robichaux-Weigandt formulas.  We also use bumpless pipe dreams to prove new results about the support of vexillary Grothendieck polynomials, addressing special cases of conjectures of M\'esz\'aros, Setiabrata, and St.~Dizier. 
\end{abstract}
\section{Introduction}  
Introduced by Lascoux and Sch\"utzenberger \cite{lascoux1982structure}, Grothendieck polynomials are a family of polynomials, indexed by the set of permutations $S_n$, which represent K-theoretic classes of the complete flag variety.  They are defined as follows.\\ \\
Define the \textbf{divided difference operator} $\partial_i$ by $\partial_i(f)=\frac{f-s_if}{x_i-x_{i+1}}$ where $f$ is a polynomial in $x_1,\ldots,x_{n+1}$ and $s_i$ acts on $f$ by interchanging the variables $x_i$ and $x_{i+1}$.  Let $\pi_i(f)=\partial_i(f-x_{i+1}f)$. The \textbf{Grothendieck polynomial} associated to a permutation $\omega \in S_n$ is then given by 
\[\mathfrak{G}_{\omega_0}(x_1,\ldots,x_n)=x_1^{n-1} x_2^{n-2} \cdots x_{n-1}\]
where $\omega_0=n \text{  } n-1 \text{  } n-2 \text{  } \ldots \text{  } 1$ is the permutation in $S_n$ with maximal length, and
\[\mathfrak{G}_{\omega s_i}=\pi_i \mathfrak{G}_{\omega}(x_1,\ldots,x_n)\]
where $s_i=(i, i+1)$ is a simple transposition such that $\omega s_i$ has one fewer inversion than $\omega$. \\ \\
Various combinatorial models for Grothendieck polynomials are described by \cite{FK94, knutson2005grobner, Lenart_groth, weigandt2020bumpless}.  This article focuses on bumpless pipe dreams which were introduced by Lam, Lee, and Shimozono \cite{lam2021stable} to model Schubert polynomials.  Weigandt \cite{weigandt2020bumpless} extended the model to yield a formula for Grothendieck polynomials.  \\ \\
A \textbf{bumpless pipe dream} (BPD) is a tiling of the $n \times n$ grid with the tiles   \\
\begin{center}
    \begin{tikzpicture}
\draw[gray, thin] (0,0) rectangle (.5,.5);
\draw[black, thick] (.25,0) -- (.25,.5);
\draw[black,thick] (0,.25) -- (.5,.25);
\draw[gray, thin] (1,0) rectangle (1.5,.5);
\draw[black, thick] (1.25,0) -- (1.25,.5);
\draw[gray, thin] (2,0) rectangle (2.5,.5);
\draw[black,thick] (2,.25) -- (2.5,.25);
\draw[gray, thin] (3,0) rectangle (3.5,.5);
\draw[black,thick] (3,.25) .. controls (3.25,.25) .. (3.25,.5);
\draw[gray, thin] (4,0) rectangle (4.5,.5);
\draw[black,thick] (4.25,0) .. controls (4.25,.25) .. (4.5,.25);
\draw[gray, thin] (5,0) rectangle (5.5,.5);
\end{tikzpicture}
\end{center} 
such that they form a network of $n$ pipes, each running from the bottom edge of the grid to the right edge \cite{lam2021stable, weigandt2020bumpless}.  To any such tiling $P$, there is an associated permutation $\omega$ given by labeling the pipes $1$ through $n$ along the bottom edge and then reading off the labels on the right edge, ignoring any crossings after the first between each pair of pipes.  In other words, replace any redundant crossings with bump tiles  
\begin{tikzpicture}
\draw[gray, thin] (0,0) rectangle (.5,.5);
\draw[black,thick] (0,.25) .. controls (.25,.25) .. (.25,.5);
\draw[black,thick] (.25,0) .. controls (.25,.25) .. (.5,.25);
\end{tikzpicture} 
 before tracing the path of each pipe to the right edge \textemdash \space see Figure \ref{fig:nonreduced}.  For a fixed permutation $\omega$, we notate the set of all BPDs associated to $\omega$ as $Pipes(\omega)$. \\ 
 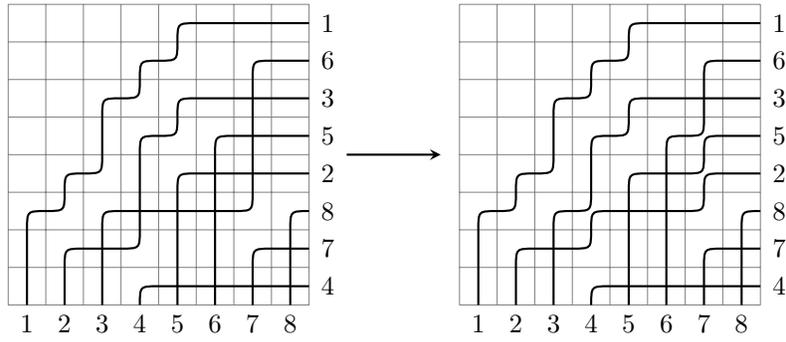
\begin{figure}
    \centering
    \scalebox{1}{
    \begin{tikzpicture}
\draw[step=.5cm,gray,very thin] (0,0) grid (4,4);
\node at (.25,-.25) {1};
\node at (.75,-.25) {2};
\node at (1.25,-.25) {3};
\node at (1.75,-.25) {4};
\node at (2.25,-.25) {5};
\node at (2.75,-.25) {6};
\node at (3.25,-.25) {7};
\node at (3.75,-.25) {8};
\node at (4.25,3.75) {1};
\node at (4.25,3.25) {6};
\node at (4.25,2.75) {3};
\node at (4.25,2.25) {5};
\node at (4.25,1.75) {2};
\node at (4.25,1.25) {8};
\node at (4.25,.75) {7};
\node at (4.25,.25) {4};
\draw[-stealth, black, thick] (4.5,2) -- (5.75,2);
\draw[step=.5cm,gray,very thin] (5.999,0) grid (10,4);
\node at (6.25,-.25) {1};
\node at (6.75,-.25) {2};
\node at (7.25,-.25) {3};
\node at (7.75,-.25) {4};
\node at (8.25,-.25) {5};
\node at (8.75,-.25) {6};
\node at (9.25,-.25) {7};
\node at (9.75,-.25) {8};
\node at (10.25,3.75) {1};
\node at (10.25,3.25) {6};
\node at (10.25,2.75) {3};
\node at (10.25,2.25) {5};
\node at (10.25,1.75) {2};
\node at (10.25,1.25) {8};
\node at (10.25,.75) {7};
\node at (10.25,.25) {4};

\draw[black,thick] (.25,0) -- (.25,1).. controls (.25,1.25) .. (.5,1.25) .. controls (.75,1.25) .. (.75,1.5) .. controls (.75,1.75) .. (1,1.75) .. controls (1.25,1.75) .. (1.25,2) -- (1.25,2.5) .. controls (1.25,2.75) .. (1.5,2.75).. controls (1.75,2.75) .. (1.75,3) .. controls (1.75,3.25) .. (2,3.25) .. controls (2.25,3.25) .. (2.25,3.5) .. controls (2.25,3.75) .. (2.5,3.75) -- (4,3.75) ;

\draw[black,thick] (.75,0) -- (.75,.5) .. controls (.75, .75) .. (1,.75) -- (1.5,.75) .. controls (1.75,.75) .. (1.75,1) -- (1.75,2) .. controls (1.75,2.25) .. (2,2.25) .. controls (2.25,2.25) .. (2.25,2.5) .. controls (2.25,2.75) .. (2.5,2.75) -- (4,2.75) ;

\draw[black,thick] (1.25,0) -- (1.25,1) .. controls (1.25,1.25) .. (1.5,1.25) -- (3,1.25) .. controls (3.25,1.25) .. (3.25,1.5) -- (3.25,3) .. controls (3.25,3.25) .. (3.5,3.25) -- (4,3.25);

\draw[black,thick] (1.75,0) .. controls (1.75,.25) .. (2,.25) -- (4,.25);

\draw[black,thick] (2.25,0) -- (2.25,1.5) .. controls (2.25,1.75) .. (2.5,1.75) -- (4,1.75);

\draw[black,thick] (2.75,0) -- (2.75, 2) .. controls (2.75,2.25) .. (3,2.25) -- (4,2.25);

\draw[black,thick] (3.25,0) -- (3.25,.5) .. controls (3.25,.75) .. (3.5,.75) -- (4,.75);

\draw[black,thick] (3.75,0) -- (3.75,1) .. controls (3.75,1.25) .. (4,1.25);

\draw[black,thick] (6.25,0) -- (6.25,1).. controls (6.25,1.25) .. (6.5,1.25) .. controls (6.75,1.25) .. (6.75,1.5) .. controls (6.75,1.75) .. (7,1.75) .. controls (7.25,1.75) .. (7.25,2) -- (7.25,2.5) .. controls (7.25,2.75) .. (7.5,2.75).. controls (7.75,2.75) .. (7.75,3) .. controls (7.75,3.25) .. (8,3.25) .. controls (8.25,3.25) .. (8.25,3.5) .. controls (8.25,3.75) .. (8.5,3.75) -- (10,3.75) ;

\draw[black,thick] (6.75,0) -- (6.75,.5) .. controls (6.75, .75) .. (7,.75) -- (7.5,.75) .. controls (7.75,.75) .. (7.75,1) .. controls (7.75,1.25) .. (8,1.25) -- (9,1.25) .. controls (9.25,1.25) .. (9.25,1.5) .. controls (9.25,1.75) .. (9.5,1.75) -- (10,1.75);

\draw[black,thick] (7.25,0) -- (7.25,1) .. controls (7.25,1.25) .. (7.5,1.25) .. controls (7.75,1.25) .. (7.75,1.5) -- (7.75,2) .. controls (7.75,2.25) .. (8,2.25) .. controls (8.25,2.25) .. (8.25,2.5) .. controls (8.25,2.75) .. (8.5,2.75) -- (10,2.75);

\draw[black,thick] (7.75,0) .. controls (7.75,.25) .. (8,.25) -- (10,.25);

\draw[black,thick] (8.25,0) -- (8.25,1.5) .. controls (8.25,1.75) .. (8.5,1.75) -- (9,1.75) .. controls (9.25,1.75) .. (9.25,2) .. controls (9.25,2.25) .. (9.5,2.25) -- (10,2.25);

\draw[black,thick] (8.75,0) -- (8.75, 2) .. controls (8.75,2.25) .. (9,2.25) .. controls (9.25,2.25) .. (9.25,2.5) -- (9.25,3) .. controls (9.25,3.25) .. (9.5,3.25) -- (10,3.25);

\draw[black,thick] (9.25,0) -- (9.25,.5) .. controls (9.25,.75) .. (9.5,.75) -- (10,.75);

\draw[black,thick] (9.75,0) -- (9.75,1) .. controls (9.75,1.25) .. (10,1.25);

\end{tikzpicture}}
    \caption{The left diagram shows a non-reduced bumpless pipe dream, and the right diagram shows which crosses are ignored in determining the associated permutation $\omega=16352874$. }
    \label{fig:nonreduced}
\end{figure}
\\
 Define the \textbf{Rothe bumpless pipe dream} $P_R(\omega)$ to be the unique BPD which has \begin{tikzpicture}
\draw[gray, thin] (0,0) rectangle (.5,.5);
\draw[black,thick] (.25,0) .. controls (.25,.25) .. (.5,.25);
\end{tikzpicture} tiles
in the squares $(i,\omega(i))$ for every $i$ and no \begin{tikzpicture}
\draw[gray, thin] (0,0) rectangle (.5,.5);
\draw[black,thick] (0,.25) .. controls (.25,.25) .. (.25,.5);
\end{tikzpicture} tiles.  We will subsequently refer to \begin{tikzpicture}
\draw[gray, thin] (0,0) rectangle (.5,.5);
\draw[black,thick] (.25,0) .. controls (.25,.25) .. (.5,.25);
\end{tikzpicture} tiles as \textbf{down-elbows} and \begin{tikzpicture}
\draw[gray, thin] (0,0) rectangle (.5,.5);
\draw[black,thick] (0,.25) .. controls (.25,.25) .. (.25,.5);
\end{tikzpicture} tiles as \textbf{up-elbows}. \\ \\
Let $U(P)$ be the set of all up-elbow tiles in $P$, and let $D(P)$ be the set of all blank tiles.  A \textbf{marked bumpless pipe dream} \cite{weigandt2020bumpless} is an ordered pair $(P,S)$ where $P$ is a bumpless pipe dream and $S$ is some subset of the set $U(P)$ of up-elbow tiles in $P$.  The set of all marked BPDs for $\omega$ is denoted $MPipes(\omega )$.  The Grothendieck polynomial associated to $\omega$ is then given by \cite{weigandt2020bumpless}\\
\[\mathfrak{G}_{\omega}(x_1,\ldots,x_n)=\sum_{(P,S) \in MPipes( \omega)}(-1)^{|D(P)| + |S| - \ell(\omega)}\left( \prod_{(i,j) \in D(P) \cup S}x_i \right)\]
where $D(P)$ is the set of all blank tiles in $P$.\\ \\
An explicit formula for the degree of a Grothendieck polynomial was proven in a recent paper by Pechenik, Speyer, and Weigandt \cite{Speyerslides}.  The \textbf{Rajchgot code} of a permutation $\omega \in S_n$ is defined to be $(r_1, r_2, \ldots, r_n)$ where $r_i$ is the minimum number of elements which must be removed from $\omega(i), \omega(i+1),\ldots, \omega(n)$ to form an increasing sequence beginning with $\omega(i)$. \\
\begin{theorem}[\cite{Speyerslides}, Theorem 1.1]
\label{PSWdegree}
The degree of the Grothendieck polynomial $\mathfrak{G}_{\omega}(x_1,\ldots,x_n)$ is given by $\sum_{i=1}^nr_i$.
\end{theorem}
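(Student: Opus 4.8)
Since this paper concerns vexillary permutations, the plan is to prove Theorem~\ref{PSWdegree} for $\omega\in S_n$ avoiding the pattern $2143$, working entirely inside the bumpless pipe dream model. I would establish the two bounds $\deg\mathfrak{G}_\omega\le\sum_{i=1}^n r_i$ and $\deg\mathfrak{G}_\omega\ge\sum_{i=1}^n r_i$ separately, the second by producing a distinguished BPD whose top-degree monomial cannot cancel. Every marked BPD $(P,S)\in MPipes(\omega)$ contributes the monomial $\prod_{(i,j)\in D(P)\cup S}x_i$ of degree $|D(P)|+|S|$, and since $S\subseteq U(P)$ this degree is at most $|D(P)|+|U(P)|$; so the upper bound reduces to the purely combinatorial claim that $|D(P)|+|U(P)|\le\sum_i r_i$ for every $P\in Pipes(\omega)$.

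For this combinatorial claim I would induct along droop moves: every $P\in Pipes(\omega)$ is reachable from the Rothe BPD $P_R(\omega)$ --- for which $|D(P_R)|=\ell(\omega)$ and $|U(P_R)|=0$ --- by a sequence of droops and K-theoretic (double-crossing) droops (cf.\ \cite{lam2021stable, weigandt2020bumpless}). A droop relocates a down-elbow from a cell $(i,j)$ to a cell $(i',j')$ weakly southeast of it, re-typing the cells in between; I would show that the net effect on $|D(P)|+|U(P)|$ is controlled and then use vexillarity to limit how far pipes can droop. This is the place where $2143$-avoidance is essential: for vexillary $\omega$ the Rothe diagram becomes, after reordering its rows, the Young diagram of a partition $\lambda(\omega)$ equipped with a flag, and this rigidity confines the ``drooped region'' of any $P\in Pipes(\omega)$. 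I would make the estimate precise by tracking, for each row $i$, the quantity $a_i(P):=\#\{\,j:(i,j)\in D(P)\cup U(P)\,\}$ and comparing $\sum_i a_i(P)$ to $\sum_i r_i$ via the identity $r_i=(n-i+1)-\mathrm{LIS}_i(\omega)$, where $\mathrm{LIS}_i(\omega)$ is the length of a longest increasing subsequence of $\omega(i),\dots,\omega(n)$ beginning at $\omega(i)$.

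For the lower bound I would write down an explicit ``maximally drooped'' BPD $P^{\star}\in Pipes(\omega)$, read off from the flagged shape of $\omega$, with $|D(P^{\star})|+|U(P^{\star})|=\sum_i r_i$, and then show that the monomial $m^{\star}=\prod_{(i,j)\in D(P^{\star})\cup U(P^{\star})}x_i$ survives in $\mathfrak{G}_\omega$. The cleanest route is to prove that $(P^{\star},U(P^{\star}))$ is the only marked BPD of $\omega$ producing $m^{\star}$, so that the coefficient of $m^{\star}$ equals $(-1)^{|D(P^{\star})|+|U(P^{\star})|-\ell(\omega)}\ne0$. Carrying out the same cell-by-cell analysis over all of $Pipes(\omega)$ then delivers the promised characterization of which BPDs contribute maximal-degree terms, and matching the shape and flag data of $P^{\star}$ against the Rajchgot--Robichaux--Weigandt degree formula yields the comparison between the two formulas.

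The step I expect to be the genuine obstacle is the upper bound, and within it the behaviour of $|D(P)|+|U(P)|$ under K-theoretic droops: since these moves can manufacture new blank cells, there is no a priori reason the total stays below the Rajchgot-code bound without invoking $2143$-avoidance. Turning the confinement of the drooped region into a quantitative cap --- essentially showing that the flag attached to $\lambda(\omega)$ limits how many additional blanks and up-elbows each row can acquire --- is, I anticipate, the technical core of the proof.
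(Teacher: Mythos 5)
Your proposal diverges from the paper in several important ways, and there is a genuine gap in the plan.

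The key fact you are missing is that vexillary permutations admit \emph{no} non-reduced bumpless pipe dreams (the paper's Lemma~2.3, which cites Lemma~7.2 of Weigandt). This means two things you are worrying about are non-issues: K-theoretic droops never occur for vexillary $\omega$, and $|D(P)|=\ell(\omega)$ is the \emph{same} for every $P\in Pipes(\omega)$, since ordinary droops preserve the number of blank tiles. Consequently the upper bound reduces at once to showing $|U(P)|\le\sum_i(r_i-c_i)$; your flagged expectation that ``the behaviour of $|D(P)|+|U(P)|$ under K-theoretic droops'' is the technical core is therefore misplaced for this statement. Relatedly, the cancellation worry in your lower-bound strategy is unnecessary: the sign of a marked BPD $(P,S)$ is $(-1)^{|D(P)|+|S|-\ell(\omega)}$, and $|D(P)|+|S|$ is precisely the degree of its monomial, so all contributions to a fixed degree have the same sign and cannot cancel. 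It suffices to exhibit one marked BPD of degree $\sum_i r_i$.

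The second, more structural difference is that the paper's bound is \emph{pipe-wise}, not row-wise: the key technical result (Theorem~1.3) is that pipe $i$ carries at most $r_{\omega^{-1}(i)}-c_{\omega^{-1}(i)}$ up-elbows, proved via a direct labelling of down-elbows and a pigeonhole argument against increasing subsequences, not by induction on droop moves. Your proposed quantity $a_i(P)=\#\{j:(i,j)\in D(P)\cup U(P)\}$ does not behave monotonically row by row under droops (a single droop moves blanks out of lower rows and into upper rows while adding an up-elbow to the lower row), so the inequality $a_i(P)\le r_i$ is not an obvious invariant of the move, and you give no argument that it holds. It may well be true, but it is a strictly stronger statement than what is needed and is not the route the paper takes. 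Your existence part (an explicitly constructed maximally drooped BPD matching the flagged shape) does parallel the paper's Lemma~2.5, but the upper bound as proposed would need to be substantially reworked: either establish the per-pipe bound directly, or supply the missing argument that the row-wise bound survives droops.
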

\noindent The \textbf{Lehmer code} is defined to be $(c_1, \ldots, c_n)$ where $c_i=|\{j : i<j, \omega(i)>\omega(j)\}|$.  Notice that $r_i \geq c_i$.  Recall that the \textbf{length} of $\omega$ is $\ell(\omega)=\sum_{i=1}^n c_i$ and, analogously to the above formula, equals the degree of the Schubert polynomial $\mathfrak{S}_{\omega}(x_1,\ldots,x_n)$ \cite{lascoux1982polynomes}.  \\ \\
\noindent Other combinatorial degree formulas have been given for special cases of Grothendieck polynomials \cite{symmetricdegree, rajchgot2022castelnuovomumford}.  In particular, recall that a \textbf{vexillary permutation} is one that is 2143-avoiding.  The following formula for the degree of Grothendieck polynomials for vexillary permutations was introduced by Rajchgot, Robichaux, and Weigandt \cite{rajchgot2022castelnuovomumford}.  \\ 
\\
\noindent Let $\omega \in S_n$ be vexillary.  To every blank tile $(i,j)$ in $P_R(\omega)$, assign the label 
\[r(i,j)=|\{(k,\omega(k))|k<i,\omega(k)<j\}|.\]  
This can be thought of as the number of pipes which pass above and to the left of $(i,j)$ in $P_R(\omega)$.  Let $\lambda(\omega)$ be the Young diagram which has the same number of tiles in each diagonal as in the corresponding diagonal of $D(P_R(\omega))$, and fill each tile with $r(i,j)$ for the corresponding $(i,j) \in D(P_R(\omega))$.  Write $\tau_k(\omega)$ for the portion of $\lambda(\omega)$ filled with values greater than or equal to $k$ (see Figure \ref{fig:RRWLabeling}).  
\begin{figure}
    \centering
\begin{tikzpicture}
 \draw[step=.5cm,gray,very thin] (0,0) grid (4,4);
    \node at (.25,-.25) {1};
    \node at (.75,-.25) {2};
    \node at (1.25,-.25) {3};
    \node at (1.75,-.25) {4};
    \node at (2.25,-.25) {5};
    \node at (2.75,-.25) {6};
    \node at (3.25,-.25) {7};
    \node at (3.75,-.25) {8};
    \node at (4.25,3.75) {1};
    \node at (4.25,3.25) {8};
    \node at (4.25,2.75) {2};
    \node at (4.25,2.25) {7};
    \node at (4.25,1.75) {3};
    \node at (4.25,1.25) {5};
    \node at (4.25,.75) {6};
    \node at (4.25,.25) {4};
    \draw[black,thick] (.25,0) -- (.25,3.5) .. controls (.25,3.75) .. (.5,3.75) -- (4,3.75);
    \draw[black,thick] (.75,0) -- (.75, 2.5) .. controls (.75,2.75) .. (1,2.75) -- (4,2.75);
    \draw[black,thick] (1.25,0) -- (1.25,1.5) .. controls (1.25,1.75) .. (1.5,1.75) -- (4,1.75);
    \draw[black,thick] (1.75,0) .. controls (1.75,.25) .. (2,.25) -- (4,.25);
    \draw[black,thick] (2.25,0) -- (2.25,1) .. controls (2.25,1.25) .. (2.5,1.25) -- (4,1.25);
    \draw[black,thick] (2.75,0) -- (2.75,.5) .. controls (2.75,.75) .. (3,.75) -- (4,.75);
    \draw[black,thick] (3.25,0) -- (3.25,2) .. controls (3.25,2.25) .. (3.5,2.25) -- (4,2.25);
    \draw[black,thick] (3.75,0) -- (3.75,3) .. controls (3.75,3.25) .. (4,3.25);
    \begin{scope}[shift={(5,0)}]
    \draw[step=.5cm,gray,very thin] (0,3.499) grid (3,4);
    \draw[step=.5cm,gray,very thin] (0,2.99) grid (2,3.5);
    \draw[step=.5cm,gray,very thin] (0,1.99) grid (.5,3);
    \node at (.25,3.75) {1};
    \node at (.75,3.75) {1};
    \node at (1.25,3.75) {1};
    \node at (1.75,3.75) {1};
    \node at (2.25,3.75) {1};
    \node at (2.75,3.75) {1};
    \node at (.25,3.25) {2};
    \node at (.75,3.25) {2};
    \node at (1.25,3.25) {2};
    \node at (1.75,3.25) {2};
    \node at (.25,2.75) {3};
    \node at (.25,2.25) {3};
    \end{scope}
      \node at (3.25,3.25) {1};
    \node at (.75,3.25) {1};
    \node at (1.25,3.25) {1};
    \node at (1.75,3.25) {1};
    \node at (2.25,3.25) {1};
    \node at (2.75,3.25) {1};
    \node at (1.25,2.25) {2};
    \node at (1.75,2.25) {2};
    \node at (2.25,2.25) {2};
    \node at (2.75,2.25) {2};
    \node at (1.75,1.25) {3};
    \node at (1.75,.75) {3};
\end{tikzpicture} 
    \caption{Shows the labeling $r(i,j)$ on $P_R(\omega)$ and the corresponding filling of $\lambda(\omega)$}
    \label{fig:RRWLabeling}
\end{figure}
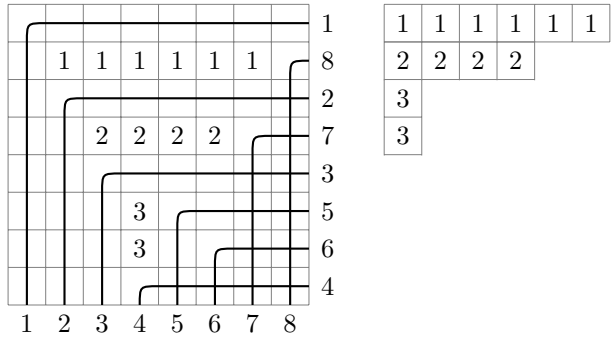 \\ \\
\noindent An \textbf{antidiagonal path} is a subset $\{(i_1,j_1) \ldots (i_k,j_k)\} \subset \left[n\right] \times \left[n\right]$ satisfying $i_1 > i_2 > \ldots > i_k$ and $j_1 <j_2 < \ldots < j_k$.  For any subset $S \subset \left[n\right] \times \left[n\right]$, let $\rho_a(S)$ denote the length of the longest antidiagonal path contained in $S$.  
\begin{theorem}[\cite{rajchgot2022castelnuovomumford}, Theorem 2.4]
\label{RRWdegree}
If $\omega \in S_n$ is vexillary, then \\
$deg(\mathfrak{G}_{\omega})=\ell(\omega)+\sum_{k=1}^{n}\rho_a(\tau_k(\omega))$
\end{theorem}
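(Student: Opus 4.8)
The plan is to combine the bumpless pipe dream expansion of $\mathfrak{G}_\omega$ with Theorem~\ref{PSWdegree}, reducing the statement to the evaluation of a single extremal statistic over $Pipes(\omega)$. First note a cancellation-free phenomenon in the marked BPD formula: a pair $(P,S)$ contributes a monomial of degree $|D(P)|+|S|$ with sign $(-1)^{|D(P)|+|S|-\ell(\omega)}$, so all contributions of any fixed degree $d$ carry the identical sign $(-1)^{d-\ell(\omega)}$ and hence cannot cancel. Taking $d$ as large as possible, and marking every up-elbow of the relevant pipe dream to realize it, gives
\[
\deg\bigl(\mathfrak{G}_\omega\bigr)\;=\;\max_{P\in Pipes(\omega)}\bigl(|D(P)|+|U(P)|\bigr).
\]
It therefore suffices to prove that, for vexillary $\omega$, this maximum equals $\ell(\omega)+\sum_{k\ge 1}\rho_a(\tau_k(\omega))$; by Theorem~\ref{PSWdegree} the same maximum equals $\sum_i r_i$, so a single argument simultaneously recovers the Pechenik--Speyer--Weigandt degree formula in the vexillary case and matches it against the Rajchgot--Robichaux--Weigandt one.

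For the lower bound I would exhibit an explicit maximizer built from $P_R(\omega)$. Because $\omega$ is vexillary, the blank tiles of $P_R(\omega)$ have the shape $\lambda(\omega)$, on which the labels $r(i,j)$ increase weakly along diagonals, so the nested superlevel sets $\tau_1(\omega)\supseteq\tau_2(\omega)\supseteq\cdots$ each have Young-diagram-like shape. Fix for each $k$ a longest antidiagonal path $\pi_k\subseteq\tau_k(\omega)$ and apply to $P_R(\omega)$ a sequence of K-droop moves \cite{lam2021stable,weigandt2020bumpless}, one indexed by each cell visited by the $\pi_k$; each such move converts blank tiles of the Rothe diagram into additional blanks and up-elbows and raises $|D(P)|+|U(P)|$, and the monotonicity of $r$ together with the fact that $r(i,j)$ bounds how far a droop rooted at $(i,j)$ can travel guarantees that exactly the cells of $\pi_k$ can be activated ``at level $k$'', that the moves can be scheduled without collision, and that the output is still a BPD for $\omega$. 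The total gain is $\sum_k|\pi_k|=\sum_k\rho_a(\tau_k(\omega))$, producing $P^{*}$ with $|D(P^{*})|+|U(P^{*})|=\ell(\omega)+\sum_k\rho_a(\tau_k(\omega))$. For the matching upper bound, given any $P\in Pipes(\omega)$ I would inject the excess blank tiles $D(P)\setminus(\text{Rothe count})$ together with the up-elbows of $P$ into $\bigsqcup_k\{\text{cells of }\tau_k(\omega)\}$ so that at most $\rho_a(\tau_k(\omega))$ tiles land in level $k$, with their images tracing an antidiagonal path of $\tau_k(\omega)$; the structural inputs are that pipes of $P$ never recross ``for free'', so the excess crossings and up-elbows organize into antidiagonal cascades, and that a pipe which has drooped past a Rothe-diagram cell $(i,j)$ has had to route around the $r(i,j)$ pipes passing above and to the left of it, which pins down the level and forces injectivity. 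Summing over $k$ and combining with the lower bound yields the equality, hence the theorem.

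The step I expect to be the main obstacle is the upper bound: turning the heuristic ``antidiagonal cascade plus level count'' into a rigorous injection requires a careful analysis of how compositions of droop and K-droop moves act on a BPD of a vexillary permutation, and it is precisely here that $2143$-avoidance does the real work -- it forces $D(\omega)$ to be a genuine Young diagram up to reordering rows and columns and the labels $r(i,j)$ to vary monotonically, so that the $\tau_k(\omega)$ nest coherently over the whole grid rather than fragmenting into incomparable pieces, which is what keeps the per-level antidiagonal estimates from overcounting. As an alternative one could bypass pipe dreams entirely and try to deduce the identity $\sum_i r_i=\ell(\omega)+\sum_k\rho_a(\tau_k(\omega))$ directly from Theorem~\ref{PSWdegree} by expressing each difference $r_i-c_i$ through the shape $\lambda(\omega)$, but this appears to require essentially the same combinatorial bookkeeping.
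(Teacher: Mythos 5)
Your reduction of $\deg(\mathfrak{G}_\omega)$ to $\max_{P\in Pipes(\omega)}\bigl(|D(P)|+|U(P)|\bigr)$ via the same-sign observation is correct and is exactly the starting point the paper uses implicitly. However, the lower bound construction has a genuine error: for vexillary $\omega$ the K-theoretic droop moves simply cannot be applied, since a K-droop always introduces a second crossing between a pair of pipes and hence produces a non-reduced BPD, while by \cite[Lemma 7.2]{weigandt2020bumpless} (restated in the paper as Lemma~\ref{localmoves}'s preamble) vexillary permutations have \emph{only} reduced BPDs. The moves you want are ordinary droops, which raise $|U(P)|$ by one while leaving $|D(P)|=\ell(\omega)$ unchanged; this also means the phrase ``converts blank tiles \ldots into additional blanks'' is off, and the whole maximization collapses to maximizing $|U(P)|$ alone. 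The paper's construction (Lemma~\ref{construction}) does exactly this: for each pipe $i$ it builds a diagram $D_i$, identifies a longest antidiagonal in its shaded part, and droops pipe $i$ to place an up-elbow in each cell of that antidiagonal, giving at least $r_{\omega^{-1}(i)}-c_{\omega^{-1}(i)}$ up-elbows on pipe $i$.

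The upper bound is where you acknowledge the gap, and it is a real one: ``inject excess tiles into $\bigsqcup_k\tau_k(\omega)$ so each level receives at most $\rho_a(\tau_k)$'' is a plausible shape of argument but is not an argument. The paper instead proves a per-pipe bound (Theorem~\ref{main}): by labeling each down-elbow with a top label and a side label, the up-elbows of pipe $i$ acquire pairs $(a,b)$ that chain into disjoint strictly decreasing sequences among entries following $i$ in one-line notation, so any increasing subsequence of $\omega$ starting at $i$ must delete at least one element per up-elbow, i.e. $|U(\text{pipe }i)|\le r_{\omega^{-1}(i)}-c_{\omega^{-1}(i)}$. Summing and combining with Lemma~\ref{construction} gives $\max_P|U(P)|=\sum_i(r_i-c_i)$, and the identification $\sum_{\{i:\,rank_\omega(i)=k\}}(r_{\omega^{-1}(i)}-c_{\omega^{-1}(i)})=\rho_a(\tau_k(\omega))$ (Corollary~\ref{corollary3}) then converts this to the Rajchgot--Robichaux--Weigandt form. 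Your proposal tries to index the extra up-elbows by level $k$ directly rather than by pipe and then regroup; that reorganization is essentially the content of Corollary~\ref{corollary3} and would still require the per-pipe chain argument (or an equivalent) to control $|U(P)|$ from above, so as written there is a missing core lemma in your upper bound.
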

\noindent In this article, we prove the following characterization of the marked BPDs which contribute maximal degree terms to $G_{\omega}$ in the case where $\omega$ is vexillary. 
\begin{theorem}
\label{main} Suppose that $P$ is a bumpless pipe dream corresponding to a vexillary permutation $\omega$.  Then the $i^{th}$ pipe has at most $r_{\omega^{-1}(i)}-c_{\omega^{-1}(i)}$ up-elbows.  In particular, if the marked bumpless pipe dream $(P,U(P))$ corresponds to a maximal degree term of $\mathfrak{G}_{\omega}$, the $i^{th}$ pipe will have exactly $r_{\omega^{-1}(i)}-c_{\omega^{-1}(i)}$ up-elbows.  
\end{theorem}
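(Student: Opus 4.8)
Put $a=\omega^{-1}(i)$, so that the $i$th pipe enters the grid at the bottom of column $\omega(a)$ and exits on the right edge in row $a$. The plan is to describe the shape of a single pipe, translate its number of up-elbows into a statistic of $\omega$ that is bounded by $r_a-c_a$, and then feed this per-pipe bound into Theorem~\ref{PSWdegree}.

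I would begin with the observation that, traced from its bottom endpoint, any pipe meets elbow tiles in the alternating pattern down-elbow, up-elbow, $\dots$, down-elbow: it starts travelling upward, ends travelling rightward, and straight and cross tiles preserve its direction of travel. Hence the $i$th pipe has exactly one more down-elbow than up-elbow, and if its up-elbows in order of traversal are $(\rho_1,\gamma_1),\dots,(\rho_{m_i},\gamma_{m_i})$, then $n\ge\rho_1>\dots>\rho_{m_i}>a$ and $\omega(a)<\gamma_1<\dots<\gamma_{m_i}\le n$; in particular the up-elbows of each pipe form an antidiagonal path, and $|U(P)|=\sum_i m_i$ where $m_i$ denotes the number of up-elbows of the $i$th pipe.

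The heart of the matter is the bound $m_i\le r_a-c_a$. Here I would use the reformulation $r_a-c_a=|M_a|-\operatorname{lis}(\omega|_{M_a})$, where $M_a=\{\,b>a:\omega(b)>\omega(a)\,\}$ and $\operatorname{lis}$ denotes the length of a longest increasing subsequence; that is, $r_a-c_a$ is the minimum number of entries one must delete from $\omega$, restricted to the positions of $M_a$, in order to leave an increasing sequence. So it suffices to manufacture, from the $m_i$ up-elbows of the $i$th pipe, an obstruction preventing $\omega|_{M_a}$ from being sorted by fewer than $m_i$ deletions. For each up-elbow $(\rho_t,\gamma_t)$ of the $i$th pipe I would analyze the local configuration — in particular the pipe entering column $\gamma_t$, which the $i$th pipe necessarily runs above (and which exits in row $\omega^{-1}(\gamma_t)$), together with the pipe occupying row $\rho_t$ to the right of column $\gamma_t$ — and extract from it a pair of positions in $M_a$ forming an inversion of $\omega$. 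The antidiagonal arrangement $\rho_1>\dots>\rho_{m_i}$, $\gamma_1<\dots<\gamma_{m_i}$ of the up-elbows should force the $m_i$ inversions so produced to occupy $2m_i$ distinct positions, so that every increasing subsequence of $\omega|_{M_a}$ must avoid at least one position from each, giving $\operatorname{lis}(\omega|_{M_a})\le|M_a|-m_i$. I expect this local analysis — pinning down the witnessing positions, excluding degenerate overlaps, and using $2143$-avoidance (equivalently, the total ordering by inclusion of the rows of the Rothe diagram of $\omega$) to control which pipes can surround an up-elbow — to be the main obstacle, and it is here that the vexillary hypothesis is used.

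For the final clause, I would also record that every bumpless pipe dream of a vexillary permutation is reduced (for instance by checking that such a $P$ is always obtained from $P_R(\omega)$ by droop moves that never create a second crossing of a pair of pipes), so that $|D(P)|=\ell(\omega)$ and the term of $\mathfrak{G}_\omega$ indexed by $(P,U(P))$ has degree $|D(P)|+|U(P)|=\ell(\omega)+\sum_i m_i$. Summing $m_i\le r_{\omega^{-1}(i)}-c_{\omega^{-1}(i)}$ over all $i$ and invoking Theorem~\ref{PSWdegree} gives $\sum_i m_i\le\sum_a(r_a-c_a)=\deg(\mathfrak{G}_\omega)-\ell(\omega)$. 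Hence if $(P,U(P))$ corresponds to a maximal degree term, then $\sum_i m_i=\sum_a(r_a-c_a)$; being an equality between a sum of the nonnegative integers $m_i$ and the sum of their respective upper bounds $r_{\omega^{-1}(i)}-c_{\omega^{-1}(i)}$, it must hold term by term, so the $i$th pipe has exactly $r_{\omega^{-1}(i)}-c_{\omega^{-1}(i)}$ up-elbows for every $i$.
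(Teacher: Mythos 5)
Your framing of the problem is right, and several of the preparatory observations are correct and match the paper: the alternation of elbows along a pipe, the antidiagonal shape of a pipe's up-elbows, the reformulation $r_a-c_a=|M_a|-\operatorname{lis}(\omega|_{M_a})$, and the reducedness of vexillary BPDs (which is Lemma~7.2 of \cite{weigandt2020bumpless}, restated as Lemma 2.2 in the paper). But the heart of the argument — the bound $m_i\le r_a-c_a$ — is exactly where you stop short, and the mechanism you propose has a genuine gap. You want each up-elbow of pipe $i$ to produce an inversion of $\omega|_{M_a}$, and you want the $m_i$ inversions to occupy $2m_i$ \emph{distinct} positions so that deleting to an increasing sequence costs at least $m_i$. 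That disjointness claim is not true in general. The paper assigns each up-elbow of pipe $i$ a pair $(a_t,b_t)$ (a side label and a top label built from the down-elbows adjacent to it), and shows that no two $a_t$ coincide and no two $b_t$ coincide — but an $a_t$ may very well equal a $b_s$. So the pairs chain together, and the multiset of labels can have as few as $m_i+1$ distinct values, not $2m_i$. The crucial step you're missing is what to do with these overlaps: rather than force disjointness, the paper sorts the labeled pairs into \emph{decreasing chains} ($b_{j_l}=a_{j_{l+1}}$), observes that each chain of $m$ pairs yields $m+1$ values, all larger than $i$ and all positioned after $\omega^{-1}(i)$, which are strictly decreasing in value and hence cost at least $m$ deletions, and then sums across the (genuinely disjoint) chains to get the total cost $\ge m_i$. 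You correctly anticipate that "excluding degenerate overlaps" is the main obstacle; the resolution in the paper is not to exclude them but to exploit them.

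Separately, your route to the "exactly" clause differs from the paper's, and while logically sound it loses something the paper wants. You deduce term-by-term equality for a maximal-degree $(P,U(P))$ by summing the per-pipe bound and invoking Theorem~\ref{PSWdegree}. The paper instead proves a companion construction result (Lemma 2.5) exhibiting a BPD in which every pipe simultaneously attains the bound; combined with the per-pipe upper bound, this both identifies the maximal-degree BPDs \emph{and} re-derives the PSW degree formula in the vexillary case — the paper explicitly advertises Theorem 1.3 as an alternate proof of Theorem~\ref{PSWdegree}. By taking the PSW formula as input, your argument proves the statement but forfeits that independence, and it does not establish that a BPD achieving the bound on every pipe actually exists (you only establish that if a maximal-degree $(P,U(P))$ exists then it has that shape).
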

\noindent Note that this result serves as an alternate proof of Theorem \ref{PSWdegree} in the case of vexillary permutations.  The proof of Theorem \ref{main} also allows us to prove the following connection between the formulas in Theorems \ref{PSWdegree} and \ref{RRWdegree}. \\ \\
For a fixed vexillary permutation $\omega$, let $rank_{\omega}(i)$ be the length of the longest increasing sequence of $j \in \left[n \right]$ such that $j \leq i \text{ and } \omega(j)\leq\omega(i)$. 
\begin{corollary}
Let $\omega \in S_n$ be vexillary.  Then $\sum_{\{i : rank_{\omega}(i)=k\}}(r_{\omega^{-1}(i)}-c_{\omega^{-1}(i)})=\rho_a(\tau_k(\omega))$.
\end{corollary}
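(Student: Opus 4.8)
The plan is to combine the two known degree formulas into one global identity and then to \emph{localize} it to a single index $k$ by means of the extremal bumpless pipe dream furnished by Theorem~\ref{main}. Write $d_i:=r_{\omega^{-1}(i)}-c_{\omega^{-1}(i)}\ge 0$ for the per-pipe bound of Theorem~\ref{main}, and for each $k\ge 1$ set $L_k:=\sum_{\{i\,:\,rank_{\omega}(i)=k\}}d_i$ and $R_k:=\rho_a(\tau_k(\omega))$, so that the assertion is $L_k=R_k$ for all $k$. First I would record the global identity: since the fibers $\{i:rank_{\omega}(i)=k\}$ partition $[n]$ and $i\mapsto\omega^{-1}(i)$ is a bijection of $[n]$,
\[
\sum_{k\ge 1}L_k=\sum_{i\in[n]}\bigl(r_{\omega^{-1}(i)}-c_{\omega^{-1}(i)}\bigr)=\sum_{p\in[n]}(r_p-c_p)=\Bigl(\sum_{p}r_p\Bigr)-\ell(\omega),
\]
which by Theorem~\ref{PSWdegree} equals $\deg(\mathfrak{G}_{\omega})-\ell(\omega)$ and hence, by Theorem~\ref{RRWdegree}, equals $\sum_{k\ge 1}\rho_a(\tau_k(\omega))=\sum_{k\ge 1}R_k$. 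Because $L_k\ge 0$ and $R_k\ge 0$ for every $k$, once we establish the single inequality $L_k\le R_k$ for all $k$, the equality $\sum_k L_k=\sum_k R_k$ forces $L_k=R_k$ term by term; so everything reduces to that one inequality.

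To prove $L_k\le R_k$ I would use a maximal-degree term of $\mathfrak{G}_{\omega}$. Such a term is a marked BPD $(P,S)$ maximizing $|D(P)|+|S|$; since $S\subseteq U(P)$ and enlarging $S$ to $U(P)$ cannot lower the degree, maximality forces $S=U(P)$. Fix this $P$. By Theorem~\ref{main} pipe $i$ has exactly $d_i$ up-elbows, so $|U(P)|=\sum_i d_i=\sum_p(r_p-c_p)$; comparing with $|D(P)|+|U(P)|=\deg(\mathfrak{G}_{\omega})=\sum_p r_p$ shows $|D(P)|=\ell(\omega)$, i.e.\ $P$ is reduced. Now I would read $L_k$ as the number of up-elbow tiles of $P$ lying on a pipe $i$ with $rank_{\omega}(i)=k$, and exhibit from this collection an antidiagonal path contained in $\tau_k(\omega)$; since $\rho_a$ is precisely the length of a longest antidiagonal path, this gives $L_k\le R_k$.

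The construction of that path has a local part and a global part. Locally: any pipe runs from the bottom edge to the right edge, alternating vertical runs (each ended by a down-elbow) with horizontal runs (each ended by an up-elbow), so the up-elbows along one pipe occur in strictly decreasing rows and strictly increasing columns and already form an antidiagonal path. Globally: I would show that if $i\ne i'$ with $rank_{\omega}(i)=rank_{\omega}(i')=k$, then no up-elbow of pipe $i$ lies weakly northwest of an up-elbow of pipe $i'$, so the up-elbows of all rank-$k$ pipes together still form a single NE-to-SW chain; here $2143$-avoidance is what keeps same-rank pipes from turning back on one another, and the fine description of the reduced BPD $P$ coming from the proof of Theorem~\ref{main} pins down exactly where each pipe's up-elbows sit. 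Finally, using that $P$ is reduced and that its pipes pass to the northwest of each up-elbow in the pattern encoded by the labels $r(i,j)$ on $P_R(\omega)$, one checks that each such up-elbow lies on a diagonal carrying a label $\ge k$, and that the resulting chain maps to an antidiagonal path of $\tau_k(\omega)$ of the same length; counting cells then gives $L_k\le\rho_a(\tau_k(\omega))=R_k$, and with the first step we are done.

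The main obstacle is exactly the global part of the previous paragraph: proving that the up-elbows of all pipes of a common rank $k$ really do assemble into one antidiagonal path, and that this path embeds in $\tau_k(\omega)$ rather than merely occupying the right multiset of diagonals (matching the diagonal content is not enough, since a Young diagram can meet many diagonals while still having a short longest antidiagonal path). Both points rest on the structural control over the extremal reduced BPD $P$ obtained while proving Theorem~\ref{main}; the cleanest route is to transport that structure --- for instance a canonical presentation of $P$ by droops from $P_R(\omega)$, or an explicit pipe-by-pipe account of where the up-elbows land --- directly into this argument rather than re-deriving it here.
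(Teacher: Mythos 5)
Your proposal takes a genuinely different route from the paper's. You first equate the two degree formulas (Theorems~\ref{PSWdegree} and~\ref{RRWdegree}) to get the global identity $\sum_k L_k=\sum_k R_k$, and then reduce the corollary to the termwise inequality $L_k\le R_k$. The paper's proof is instead a direct termwise identification: it returns to the diagrams $D_i$ built in the proof of Lemma~\ref{construction} (the blank cells of $P_R(\omega)$ beneath pipe $i$, packed into a Young diagram and padded with X's) and identifies the shaded portion of $D_i$ with the connected component of $\tau_k(\omega)$ where $k=rank_\omega(i)$. Because the cells of $\lambda(\omega)$ are exactly the blank cells of $P_R(\omega)$ transported diagonal by diagonal, and the label on each cell of a given component is constant and equal to that $k$, same-rank pipes contribute disjoint components occupying disjoint ranges of diagonals, so the longest antidiagonals add. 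Combined with the fact, extracted jointly from Lemma~\ref{construction} and Theorem~\ref{main}, that the longest antidiagonal of (the shaded part of) $D_i$ has length exactly $r_{\omega^{-1}(i)}-c_{\omega^{-1}(i)}$, this gives $L_k=R_k$ directly. The paper's route is also better matched to what the corollary is \emph{for}: it exhibits a term-by-term correspondence between the summands of the PSW and RRW formulas rather than treating the RRW formula as a black-box input, as your global step does.

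The more serious issue is that your reduction leaves the key step unproved. Establishing $L_k\le R_k$ requires producing, inside $\tau_k(\omega)$, an antidiagonal path of length $L_k$, and you correctly identify the two difficulties: showing that the up-elbows of \emph{all} rank-$k$ pipes in your extremal BPD assemble into a single NE-to-SW chain, and showing that this chain actually embeds in $\tau_k(\omega)$ (matching diagonal content is not enough). You explicitly flag this as ``the main obstacle'' and defer it to ``structural control over the extremal reduced BPD $P$'' without carrying it out. That structural control is precisely what the paper's identification of the $D_i$ with components of $\tau_k(\omega)$ supplies, and it works with $P_R(\omega)$ and the $D_i$ rather than with the up-elbows of a maximal BPD; transporting the argument to a maximal BPD introduces exactly the embedding headache you are worried about. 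As written, your proposal is a sound plan with the central lemma missing.
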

\noindent Additionally, Theorem \ref{main} allows us to prove new results about the support of Grothendieck polynomials.  In particular, M\'esz\'aros, Setiabrata, and St.~Dizier \cite{conjectures} conjectured the following.
\begin{conjecture}[\cite{conjectures}, Conjecture 1.2]
\label{conjsides}
Let $\omega \in S_n$, and let $x_1^{i_1}x_2^{i_2}\ldots x_n^{i_n}$ be any monomial with a nonzero coefficient and non-maximal degree in $\mathfrak{G}_{\omega}(x_1,\ldots,x_n)$.  There exists a monomial $x_1^{j_1}x_2^{j_2}\ldots x_n^{j_n}$ with nonzero coefficient in $\mathfrak{G}_{\omega}(x_1,\ldots,x_n)$ such that $j_k=i_k+1$ for some $1 \leq k \leq n$ and $j_l=i_l$ for all other indices $l$.
\end{conjecture}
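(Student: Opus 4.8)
The plan is to argue inside Weigandt's marked bumpless pipe dream model, exploiting that it is cancellation-free. For any $(P,S)\in MPipes(\omega)$ the union $D(P)\cup S$ is disjoint, so the number of variable slots is $|D(P)|+|S|$, which is exactly the degree of the monomial $\prod_{(i,j)\in D(P)\cup S}x_i$. Hence every marked BPD contributing to a given monomial carries the same sign $(-1)^{\deg-\ell(\omega)}$, and the coefficient of $x_1^{i_1}\cdots x_n^{i_n}$ in $\mathfrak{G}_\omega$ is $\pm$ the number of marked BPDs whose row-content vector (the vector whose $a$-th entry counts the cells of $D(P)\cup S$ in row $a$) equals $(i_1,\dots,i_n)$; in particular $x_1^{i_1}\cdots x_n^{i_n}$ lies in the support of $\mathfrak{G}_\omega$ if and only if some marked BPD for $\omega$ has this row content. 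Writing $\delta_a(P)$ for the number of blank tiles and $\upsilon_a(P)$ for the number of up-elbows in row $a$ of $P$, the contents realized by $(P,S)$ as $S$ ranges over all subsets of $U(P)$ are precisely the lattice points of the box $B(P)=\prod_a[\delta_a(P),\,\delta_a(P)+\upsilon_a(P)]$, so $\operatorname{supp}(\mathfrak{G}_\omega)=\bigcup_{P\in Pipes(\omega)}\bigl(B(P)\cap\Z^n\bigr)$.

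Given a support monomial $x^i:=x_1^{i_1}\cdots x_n^{i_n}$ with $|i|<\deg\mathfrak{G}_\omega$, choose a BPD $P$ with $i\in B(P)$. If $i_a<\delta_a(P)+\upsilon_a(P)$ for some $a$, then $i+e_a$ (where $e_a$ is the $a$-th unit vector) again lies in $B(P)\subseteq\operatorname{supp}(\mathfrak{G}_\omega)$, and $x^{i+e_a}$ is the desired monomial. So the remaining case is $i=\delta(P)+\upsilon(P)=\operatorname{content}(P,U(P))$, i.e.\ the fully marked BPD $(P,U(P))$ realizes $x^i$ and has non-maximal degree $|D(P)|+|U(P)|$. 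This case is genuinely present: for instance the Rothe BPD $P_R(\omega)$, which by definition has no up-elbows, contributes a point that is a top corner of its own (one-point) box, of degree $\ell(\omega)$.

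The crux is to produce, from such a $P$, a BPD $P'$ of $\omega$ and an index $k$ with $i+e_k\in B(P')$; equivalently $\delta_a(P')\le i_a\le\delta_a(P')+\upsilon_a(P')$ for all $a\ne k$ and $\delta_k(P')\le i_k+1\le\delta_k(P')+\upsilon_k(P')$. The plan is a degree-raising droop: since $|D(P)|+|U(P)|$ is not maximal and $Pipes(\omega)$ is connected under the droop and K-theoretic moves of \cite{lam2021stable,weigandt2020bumpless}, some such move applies to $P$, and one shows it can be chosen, after local adjustment, so that it creates exactly one additional blank tile, in a single row $k$, while only relocating up-elbows within their own rows; one then lets $S'$ mark the up-elbows of $P'$ up to level $i_a$ in each row $a\ne k$ and one further up-elbow in row $k$, giving content $i+e_k$. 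Carrying out this local calibration is the main obstacle. In the vexillary case it is tractable: by Theorem~\ref{main} the $m$-th pipe of $P$ carries at most $r_{\omega^{-1}(m)}-c_{\omega^{-1}(m)}$ up-elbows, with simultaneous equality for all $m$ characterizing the maximal-degree BPDs, so a non-maximal $(P,U(P))$ has a pipe with an up-elbow deficit; drooping that pipe one step converts a vertical segment into a blank together with a new up-elbow, raising its up-elbow count by one while the bound of Theorem~\ref{main} keeps the remaining rows under control, yielding $i+e_k$. For general $\omega$ the Rajchgot code admits no such pipe-by-pipe reading; one would instead have to track how $\operatorname{supp}(\mathfrak{G}_\omega)$ transforms under the divided-difference operators $\pi_i$ generating the Grothendieck polynomials (with Theorem~\ref{PSWdegree} controlling the degrees involved), and understanding the effect of $\pi_i$ on the support outside the vexillary range is the genuinely hard point.

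Once the degree-raising move is available, $i+e_k\in B(P')\cap\Z^n\subseteq\operatorname{supp}(\mathfrak{G}_\omega)$, so the monomial $x^j$ with $j=i+e_k$ has nonzero coefficient in $\mathfrak{G}_\omega$, $j_k=i_k+1$, and $j_l=i_l$ for all $l\ne k$, which is exactly the claimed conclusion.
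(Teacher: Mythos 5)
Your setup is sound and matches the paper's: the model is sign-coherent degree by degree, so the support is the union over $P\in Pipes(\omega)$ of the boxes $B(P)$, and the easy case ($S\subsetneq U(P)$, i.e.\ $i_a<\delta_a(P)+\upsilon_a(P)$ for some $a$) is handled exactly as in the paper by marking one more up-elbow. The problem is that the entire difficulty of the statement sits in the case you label "the crux" and then only assert: producing, from a non-maximal fully-marked $(P,U(P))$, a new BPD whose box contains $i+e_k$. Saying that Theorem~\ref{main} supplies a pipe $k$ with an up-elbow deficit and that "drooping that pipe one step" does the job is not a proof, because (a) the square into which pipe $k$ would need to droop may be occupied by elbows of other pipes, so no such local move exists on $P$ itself; and (b) a non-local droop changes the blank count in two rows by $\pm(j_2-j_1)$, not by a single unit, so an arbitrary degree-raising droop does not yield $i+e_k$. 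The paper's proof of Proposition~\ref{upByOne} spends essentially all of its effort on exactly these two points: it builds an auxiliary BPD $P'$ by resetting pipes $1$ through $k-1$ to their Rothe positions to locate an admissible move on pipe $k$, transfers that move back to $P$, iteratively clears the obstructing up-elbows of other pipes $k'$ via the local slide moves, and then \emph{undoes} all but one droop so that the net change to the monomial is multiplication by a single variable. Your sketch acknowledges none of the obstruction-clearing and none of the final calibration (your intermediate BPD could correspond to a monomial divisible by the original by more than one variable).

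A secondary point: the statement as posed is for all $\omega\in S_n$, and like the paper you only address the vexillary case (your remarks about $\pi_i$ for general $\omega$ are speculation, not argument). That restriction is acceptable here since the paper itself only proves the vexillary case, but the vexillary argument you give still has the gap above, so the proposal as written does not establish even that case.
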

\begin{conjecture}[\cite{conjectures}, Conjecture 1.3]
\label{conjbetween}
Let $\omega \in S_n$, and suppose that $p_1(x_1, \ldots, x_n)$ and $p_2(x_1, \ldots, x_n)$ are monomials with nonzero coeffiecient in $\mathfrak{G}_{\omega}$ such that $p_1 | p_2$.  Then any monomial $q(x_1, \ldots, x_n)$ satisfying $p_1 | q $ and $q | p_2$ must also have nonzero coefficient in $\mathfrak{G}_{\omega}$.
\end{conjecture}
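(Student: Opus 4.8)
I would run the whole argument inside Weigandt's bumpless pipe dream model \cite{weigandt2020bumpless} and reduce Conjecture~\ref{conjbetween} to a clean combinatorial statement about the support. The first observation is that Weigandt's formula has no internal cancellation once the total degree is fixed. Indeed, if a marked BPD $(P,S)$ of $\omega$ contributes a monomial $x^a := x_1^{a_1}\cdots x_n^{a_n}$, then $\prod_{(i,j)\in D(P)\cup S}x_i = x^a$, and since blank tiles and up-elbow tiles are distinct tile types the sets $D(P)$ and $S$ are disjoint, so $|D(P)|+|S| = |D(P)\cup S| = |a|$ and the attached sign $(-1)^{|D(P)|+|S|-\ell(\omega)} = (-1)^{|a|-\ell(\omega)}$ depends only on $x^a$. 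Hence the coefficient of $x^a$ in $\mathfrak{G}_\omega$ equals $(-1)^{|a|-\ell(\omega)}$ times the number of marked BPDs of $\omega$ whose content vector is $a$, and it is nonzero precisely when some $(P,S)$ realizes $a$. Writing $b_i(P)$ and $u_i(P)$ for the numbers of blank and up-elbow tiles in row $i$ of a BPD $P$, and using that the marking is an arbitrary subset of $U(P)$, the content vectors realized by a fixed $P$ form the lattice box $\mathrm{Box}(P) := \prod_{i=1}^{n}\{\,b_i(P),\,b_i(P)+1,\,\ldots,\,b_i(P)+u_i(P)\,\}$. Thus $\mathrm{supp}(\mathfrak{G}_\omega) = \bigcup_{P\in Pipes(\omega)}\mathrm{Box}(P)$, and Conjecture~\ref{conjbetween} is exactly the assertion that this union of axis-parallel lattice boxes is \emph{box-convex}: whenever $p\le q$ componentwise and both lie in the union, the full box $\prod_i\{p_i,\ldots,q_i\}$ lies in it. (Conjecture~\ref{conjsides} is the special case in which $q$ is taken to be a top-degree point.)

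\textbf{Reduction to a one-step lemma.} I would then reduce box-convexity to the following.
\begin{quote}
\textbf{Key Lemma.} If $p,q\in\mathrm{supp}(\mathfrak{G}_\omega)$ with $p\le q$ and $p_i<q_i$ for some $i$, then $p+e_i\in\mathrm{supp}(\mathfrak{G}_\omega)$.
\end{quote}
Granting this, take $c$ with $p\le c\le q$ and $p,q\in\mathrm{supp}(\mathfrak{G}_\omega)$: starting at $p$, repeatedly increase a coordinate at which the current point is still below $c$; since $q$ dominates the current point and $c\le q$, the Key Lemma applies at each step, and after finitely many steps we reach $c$, so $c\in\mathrm{supp}(\mathfrak{G}_\omega)$. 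In BPD terms the Key Lemma asks: given $P$ with $p\in\mathrm{Box}(P)$ and $Q$ with $q\in\mathrm{Box}(Q)$, $p\le q$, $p_i<q_i$, produce a BPD $P'$ with $p+e_i\in\mathrm{Box}(P')$. This is trivial when $P$ has an unmarked up-elbow available in row $i$ (take $P'=P$), so the real content is the case $p_i = b_i(P)+u_i(P)$, where $P'$ must be a genuinely different pipe dream.

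\textbf{Proving the Key Lemma.} For this I would use that $Pipes(\omega)$ is connected under the local moves of \cite{lam2021stable, weigandt2020bumpless} — droops and their inverses, together with insertion and deletion of a redundant crossing — and travel along such a path from $Q$ back toward $P$. Each move alters the pair $(b_\bullet,u_\bullet)$ only inside the rectangle it touches; I would track this change row by row and argue that one can drive every coordinate other than $i$ down to agree with $p$ while keeping coordinate $i$ at least $p_i+1$, producing the required $P'$. As warm-ups I would first check the claim when $P$ or $Q$ is the Rothe BPD $P_R(\omega)$, and settle the vexillary case using Theorem~\ref{main} and the resulting description of $\mathrm{Box}(P)$ for $P\in Pipes(\omega)$.

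\textbf{Main obstacle.} The crux — and the reason the conjecture resists a uniform proof — is precisely this row-by-row control for non-vexillary $\omega$. A single droop in a general BPD can change $b_j$ and $u_j$ in many rows at once and with no monotonicity, so merely lying on a droop-path from $Q$ to $P$ does not by itself produce a pipe dream realizing $p+e_i$; one must show that a suitable connecting pipe dream always exists, i.e.\ that the collection $\{\mathrm{Box}(P)\}_{P\in Pipes(\omega)}$ is box-convexly closed under amalgamation along these moves. Isolating the right invariant to push this through — rather than the direct bookkeeping that suffices in the Rothe and vexillary cases — is where a complete proof of Conjecture~\ref{conjbetween} would require a genuinely new structural input about $Pipes(\omega)$.
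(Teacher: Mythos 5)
Your setup is correct and worth keeping: since $D(P)$ and $S$ are disjoint, the sign in Weigandt's formula depends only on the total degree of the monomial, so the support of $\mathfrak{G}_\omega$ is exactly $\bigcup_{P}\mathrm{Box}(P)$ with no cancellation, and the conjecture is equivalent to box-convexity of this union; your reduction of box-convexity to the Key Lemma is also valid. But the Key Lemma is the entire content of the statement, and you do not prove it: for general $\omega$ you explicitly concede that a ``genuinely new structural input'' is needed, and for the vexillary case you offer only a one-sentence gesture at Theorem \ref{main}. As written, the proposal is a correct reformulation plus an honest acknowledgment that the proof is missing, not a proof. (A smaller slip: Conjecture \ref{conjsides} is not literally the special case of your Key Lemma in which $q$ is top-degree --- to derive it you would also need that every support point is dominated by a top-degree support point, which itself requires proof; in the paper this is Theorem \ref{VexSides}(ii), deduced from Proposition \ref{upByOne}.)

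For calibration: the paper does not prove the conjecture in general either --- it establishes only the vexillary case (Theorem \ref{VexBetween}) --- and its proof of the one-step move there is exactly the bookkeeping you identify as the obstacle, made tractable by vexillarity. It works downward from $p_2$ rather than upward from $p_1$: given $(P_2,S_2)$ realizing $p_2$ and a row $i$ where $p_1$ has strictly smaller exponent, either $S_2$ contains a marked up-elbow in row $i$ (unmark it to obtain $p_2/x_i$), or there is a blank tile in row $i$ of $P_2$ absent from $P_1$, and one of an explicit list of local moves removes it; such a move yields either $p_2/x_i$ outright or $p_2\,x_{i'}/x_i$ for some other row $i'$, in which case the procedure is repeated in row $i'$ (where the exponent now exceeds that of $p_1$, so there is room to come back down). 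Termination holds because every move brings the pipes closer to their positions in $P_1$, and the vexillary hypothesis enters through the facts that all BPDs are reduced (so $S_2\neq\emptyset$ whenever $p_2$ is not of minimal degree) and that $Pipes(\omega)$ is connected by the simple local moves of Lemma \ref{localmoves}, which keeps the case analysis finite. If you want to complete your write-up in the vexillary case, this descending one-step argument is the piece you must supply; for non-vexillary $\omega$ your ``main obstacle'' paragraph correctly locates why it breaks, and the statement remains open.
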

\noindent We give proofs of these conjectures in the special case where $\omega$ is vexillary (in Proposition \ref{upByOne} and Theorem \ref{VexBetween}, respectively). \\ \\
Furthermore, we address the following question posed by Weigandt \cite{WeigandtTalk}:
\begin{quote}
``Can you find \ldots permutation codes that tell you the lex$\left[\text{icographically}\right]$ last monomials in each degree of a Grothendieck polynomial?"    
\end{quote}
Recall that the lexicographically last monomial in each homogeneous component $\mathfrak{G}_{\omega}^{(k)}(\mathbf{x})$ is the leading term in any monomial order satisfying $x_1<x_2< \cdots <x_n$.  When $\omega$ is vexillary, we prove the following formula for each such leading term.
\begin{theorem}
\label{leadingterms} 
Suppose that $\omega$ is vexillary, and fix $k$ such that $deg(\mathfrak{S}_{\omega}(x)) \leq k \leq deg(\mathfrak{G}_{\omega}(x))$.  Let $j \in \{1,\ldots,n\}$ be the smallest possible index such that $\sum_{i=j}^n(r_i-c_i) \leq k-deg(\mathfrak{S}_{\omega}(x))$.  If $e_i=r_i$ for all $i \geq j$, $e_{j-1}=c_{j-1}+k-deg(\mathfrak{S}_{\omega}(x))-\sum_{i=j}^n(r_i-c_i)$, and $e_i=c_i$ for all $i<j-1$, then the leading term of $\mathfrak{G}_{\omega}^{(k)}(\mathbf{x})$ in any term order with $x_1<x_2< \cdots <x_n$ is a scalar multiple of $x_1^{e_1} \cdots x_n^{e_n}$.
\end{theorem}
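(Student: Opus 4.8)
The plan is to leverage Theorem \ref{main} together with the monomial expansion of $\mathfrak{G}_\omega$ from bumpless pipe dreams. First, I would recall that for a vexillary $\omega$, the degree of $\mathfrak{G}_\omega$ is $\sum_i r_i$ (Theorem \ref{PSWdegree}), the degree of $\mathfrak{S}_\omega$ is $\sum_i c_i = \ell(\omega)$, and that the Schubert polynomial is exactly the lowest homogeneous component of $\mathfrak{G}_\omega$; its lexicographically last monomial (in a term order with $x_1 < \cdots < x_n$) is known to be $x_1^{c_1}\cdots x_n^{c_n}$ when $\omega$ is vexillary (this is the dominant/flagged-Schur behavior of vexillary Schubert polynomials, which I would cite or derive from the Rothe BPD being the unique minimal-degree pipe dream). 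So the claimed exponent vector $(e_1, \ldots, e_n)$ interpolates between $(c_1, \ldots, c_n)$ at $k = \deg \mathfrak{S}_\omega$ and $(r_1, \ldots, r_n)$ at $k = \deg \mathfrak{G}_\omega$, incrementing coordinates from the right (largest index first) as $k$ grows, which is precisely the greedy choice a term order with $x_1 < \cdots < x_n$ would make.

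Next, I would establish the \emph{upper bound}: every monomial $x_1^{i_1}\cdots x_n^{i_n}$ appearing in $\mathfrak{G}_\omega^{(k)}$ satisfies $\sum_{\ell \geq m} i_\ell \leq \sum_{\ell \geq m} r_\ell$ for every $m$, and more precisely, no monomial of degree $k$ is lex-larger than $x_1^{e_1}\cdots x_n^{e_n}$. For this, fix a marked BPD $(P,S)$ contributing to $\mathfrak{G}_\omega^{(k)}$, so $|D(P)| + |S| = k$. The exponent of $x_i$ in its monomial is the number of tiles in row $i$ that are blank or marked up-elbows. The key observation — which I expect to extract from the proof of Theorem \ref{main} rather than just its statement — is a row-wise (equivalently pipe-wise) bound: reading rows from the bottom up, the total number of blank-or-marked tiles in rows $m, m+1, \ldots, n$ is at most $\sum_{\ell \geq m}(\text{something} \leq r_\ell)$. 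Concretely, Theorem \ref{main} says the $i$-th pipe has at most $r_{\omega^{-1}(i)} - c_{\omega^{-1}(i)}$ up-elbows; combining this with the fact that the blank tiles are controlled pipe-by-pipe in the same manner (each pipe "owns" its blanks via the Rothe-BPD comparison, contributing $c$ to the Schubert part), one gets that the contribution to $x_m, \ldots, x_n$ from any $(P,S)$ is dominated, in the dominance order read from the right, by $(r_m, \ldots, r_n)$ augmented by the residual Schubert degrees in rows $< m$. Making this precise — i.e., turning the per-pipe bound of Theorem \ref{main} into a per-row suffix-sum bound compatible with lex order — is the main obstacle, since pipes and rows interleave, and I would handle it by choosing, for each row threshold $m$, the set of pipes whose "relevant activity" lies weakly below row $m$ and summing their individual bounds.

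Finally, I would prove \emph{attainment}: there is a marked BPD $(P, S)$ of degree $k$ whose monomial is exactly $x_1^{e_1}\cdots x_n^{e_n}$ (up to the stated scalar, which is $(-1)^{k - \ell(\omega)}$ times the coefficient count). Here I would start from $P_R(\omega)$, which realizes $x_1^{c_1}\cdots x_n^{c_n}$ as the monomial of $(P_R(\omega), \varnothing)$, and then — using the equality case of Theorem \ref{main}, which describes the BPDs where each pipe attains its maximal $r_{\omega^{-1}(i)} - c_{\omega^{-1}(i)}$ up-elbows — I would select a marking $S \subseteq U(P_R(\omega))$ that turns on exactly the extra boxes in rows $j, j+1, \ldots, n$ (full $r_i$ in those rows) plus a partial amount $e_{j-1} - c_{j-1}$ in row $j-1$. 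I need to check two things: that such up-elbows actually exist in the prescribed rows of $P_R(\omega)$ in the required quantity (this follows from the vexillary structure and the definition of the Rajchgot code via the labels $r(i,j)$ on blank tiles, which count pipes passing above-left, matching the "partial increasing subsequence removal" definition of $r_i$), and that the resulting monomial survives with nonzero coefficient — which it does, because in a vexillary Grothendieck polynomial the leading lex monomial of each degree cannot cancel (no lex-larger monomial exists to cancel against, and a standard sign/parity argument shows all contributing terms to this extremal monomial have the same sign $(-1)^{k-\ell(\omega)}$). Combining the upper bound and attainment, the leading term of $\mathfrak{G}_\omega^{(k)}$ in any term order with $x_1 < \cdots < x_n$ is a scalar multiple of $x_1^{e_1}\cdots x_n^{e_n}$, as claimed.
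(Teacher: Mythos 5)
Your plan diverges from the paper's proof in structure, and as written it has two real gaps.

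First, the attainment step is built on a false premise. You propose to ``select a marking $S \subseteq U(P_R(\omega))$'' to turn on extra boxes, but by definition the Rothe bumpless pipe dream has \emph{no} up-elbow tiles, so $U(P_R(\omega)) = \varnothing$ and there is nothing to mark. Up-elbows only appear after you droop pipes, and drooping simultaneously redistributes the blank tiles (shifting them to smaller row indices), so you cannot simply ``add one to some exponents'' of $x_1^{c_1}\cdots x_n^{c_n}$ without tracking that side effect. This is precisely why the paper works with a drooped BPD $P$ and the marking $S = U(P)$, rather than an augmented Rothe BPD.

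Second, your upper-bound step is not actually carried out: you observe that translating the per-pipe bound of Theorem \ref{main} into a per-row suffix-sum bound ``is the main obstacle'' and sketch how one might approach it, but you do not resolve it, and a suffix-sum dominance bound alone does not pin down the lexicographic leading term in any case. The paper avoids this entirely. Its argument is a two-step exchange argument: (1) if $(P,S)$ corresponds to the leading term of $\mathfrak{G}_\omega^{(k)}$ and $S \subsetneq U(P)$, pick the unmarked up-elbow $(i,j)$ with $i$ minimal; if $S$ contains some $(i',j')$ with $i'<i$, swapping it for $(i,j)$ gives a strictly larger monomial of the same degree, and otherwise un-drooping $(i,j)$ does, a contradiction either way, so $S = U(P)$. (2) Since $P$ must then have exactly $k - \ell(\omega)$ up-elbows and pipe $\omega(i)$ carries at most $r_i - c_i$ of them by Theorem \ref{main}, the leading term is realized by the greedy $P$ obtained by drooping pipe $\omega(i)$ for $i = n, n-1, \ldots$ in turn, giving it $\min\{r_i - c_i,\, \text{remaining budget}\}$ up-elbows pushed as far left as possible; this BPD corresponds to $x_1^{e_1}\cdots x_n^{e_n}$. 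Your observation that terms in a fixed degree cannot cancel (all contributing marked BPDs have sign $(-1)^{|D(P)|+|S|-\ell(\omega)} = (-1)^{k-\ell(\omega)}$) is correct and worth keeping, as it justifies the ``nonzero coefficient'' tacitly assumed in the paper.
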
 
\noindent In section 2, we give some existing results about bumpless pipe dreams as well as the proof of Theorem \ref{main}.  We also prove some consequences and conjecture a possible generalization of that result.  Section 3 includes facts about the support of vexillary Grothendieck polynomials including Theorem \ref{leadingterms} and the vexillary cases of Conjectures \ref{conjsides} and \ref{conjbetween}.  \\

\section{Vexillary Bumpless Pipe Dreams}  
We begin by recalling a method for generating BPDs.  A \textbf{droop} \cite{lam2021stable} is a move of the form \\
\begin{center}
    \scalebox{1}{\begin{tikzpicture}
\draw[step=.5cm,gray,very thin] (0,0) grid (2.5,2);
\draw[step=.5cm,gray,very thin] (2.999,0) grid (5.5,2);
\draw[-stealth, black, thick] (2.6,1) -- (2.9,1);
\draw[black, thick] (.25,0) -- (.25,1.5);
\draw[black, thick] (.25,1.5) .. controls (.25,1.75) .. (.5,1.75);
\draw[black,thick] (.5,1.75) -- (2.5,1.75);
\draw[black,thick] (3.25,0) .. controls (3.25,.25) .. (3.5,.25);
\draw[black,thick] (3.5,.25) -- (5,.25);
\draw[black,thick] (5, .25) .. controls (5.25,.25) .. (5.25,.5);
\draw[black,thick] (5.25,.5)--(5.25,1.5);
\draw[black,thick] (5.25,1.5).. controls (5.25,1.75) .. (5.5,1.75);
\end{tikzpicture}}
\end{center}
where the region shown contains no other (unpictured) elbow tiles.  Similarly, \textbf{K-theoretic droops} are moves of one of the following forms.  \\
\begin{center}
   \scalebox{1}{\begin{tikzpicture}
\draw[step=.5cm,gray,very thin] (0,0) grid (2.5,2);
\draw[step=.5cm,gray,very thin] (2.999,0) grid (5.5,2);
\draw[-stealth, black, thick] (2.6,1) -- (2.9,1);
\draw[black, thick] (.25,0) -- (.25,1.5);
\draw[black, thick] (.25,1.5) .. controls (.25,1.75) .. (.5,1.75);
\draw[black,thick] (.5,1.75) -- (2.5,1.75);
\draw[black,thick] (1.25,0) .. controls (1.25,.25) .. (1.5,.25) -- (2,.25) .. controls (2.25,.25) .. (2.25,.5) -- (2.25,2);

\draw[black,thick] (3.25,0) .. controls (3.25,.25) .. (3.5,.25);
\draw[black,thick] (3.5,.25) -- (5,.25);
\draw[black,thick] (5, .25) .. controls (5.25,.25) .. (5.25,.5);
\draw[black,thick] (5.25,.5)--(5.25,2);
\draw[black,thick] (4.25,0) -- (4.25,1.5) .. controls (4.25,1.75) .. (4.5,1.75) -- (5.5,1.75);
\end{tikzpicture}}

    \scalebox{1}{\begin{tikzpicture}
\draw[step=.5cm,gray,very thin] (0,0) grid (2.5,2);
\draw[step=.5cm,gray,very thin] (2.999,0) grid (5.5,2);
\draw[-stealth, black, thick] (2.6,1) -- (2.9,1);
\draw[black, thick] (.25,0) -- (.25,1.5);
\draw[black, thick] (.25,1.5) .. controls (.25,1.75) .. (.5,1.75);
\draw[black,thick] (.5,1.75) -- (2.5,1.75);
\draw[black,thick] (0,.25) -- (2,.25) .. controls (2.25,.25) .. (2.25,.5) .. controls (2.25,.75) .. (2.5,.75);

\draw[black,thick] (3,.25) -- (5,.25);
\draw[black,thick] (5, .25) .. controls (5.25,.25) .. (5.25,.5);
\draw[black,thick] (5.25,.5)--(5.25,1.5);
\draw[black,thick] (5.25,1.5).. controls (5.25,1.75) .. (5.5,1.75);
\draw[black,thick] (3.25,0) -- (3.25,.5) .. controls (3.25,.75) .. (3.5,.75) -- (5.5,.75);
\end{tikzpicture}}
\end{center} 
\noindent A bumpless pipe dream is called \textbf{reduced} if each pair of pipes crosses at most once, and the set of reduced BPDs is denoted $RPipes(\omega)$.  It is known that every $P \in RPipes(\omega)$ can be reached from $P_R(\omega)$ by a series of droops \cite{lam2021stable}.  Furthermore, every $P \in Pipes(\omega)$ can be reached by a series of droops and K-theoretic droops \cite{weigandt2020bumpless}.\\ \\
This model can be simplified in the case where $\omega$ is vexillary. \\
\begin{lemma}[\cite{weigandt2020bumpless}, Lemma 7.2]
A permutation is vexillary if and only if the permutation has no non-reduced bumpless pipe dreams.
\end{lemma}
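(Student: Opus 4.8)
The plan is to prove both implications through a single bridge: in a bumpless pipe dream for $\omega$, a pair of pipes that crosses twice should correspond to a $2143$ pattern in $\omega$.

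First suppose $\omega$ is not vexillary and fix a $2143$ pattern, say at positions $i_1<i_2<i_3<i_4$ with $\omega(i_2)<\omega(i_1)<\omega(i_4)<\omega(i_3)$. In the Rothe BPD $P_R(\omega)$ the four cells $(i_t,\omega(i_t))$ are down-elbows, and one checks directly from the definition of $P_R(\omega)$ that $(i_1,\omega(i_2))$ and $(i_3,\omega(i_4))$ are blank tiles. Since $i_1<i_3$ and $\omega(i_1)<\omega(i_3)$, the down-elbows at $(i_1,\omega(i_1))$ and $(i_3,\omega(i_3))$ lie in a strictly northwest--southeast position, and the only obstruction to drooping the first one down and to the right past column $\omega(i_3)$ is the presence of down-elbows inside the rectangle spanned by these two cells. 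I would therefore first apply a sequence of ordinary droops to evacuate from that rectangle every down-elbow other than the two distinguished ones, moving the southeast-most obstruction first so that successive droops do not disturb pipes $\omega(i_1)$ and $\omega(i_3)$; the result is a reduced BPD whose local picture around those two pipes matches the left side of one of the two K-theoretic droop moves recalled above. Performing that K-theoretic droop produces a BPD in which pipes $\omega(i_1)$ and $\omega(i_3)$ cross twice, hence a non-reduced BPD for $\omega$.

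For the converse, suppose $P\in Pipes(\omega)$ is non-reduced, so some pair of pipes crosses at least twice. Each pipe of a BPD is a lattice path that moves only north and east, so any such pair encloses a bigon; I would choose pipes $p$ and $q$, labelled by their bottom-edge columns with $p<q$, together with consecutive crossings $A$ (southwest) and $B$ (northeast) whose enclosed bigon is minimal, say of smallest area. Minimality should allow one to put the bigon into a normal form, controlling which pipes pass through it: it contains no smaller bigon, and a short argument limits it to at most one further ``defect'' inside, namely either the single turn of $p$ or of $q$, or a down-elbow of a third pipe which is then forced to cross both $p$ and $q$. From this normal form I would extract four indices realizing $2143$, built from the bottom-edge columns and right-edge exit rows of $p$ and $q$ together with the data of the defect, using that $p$ and $q$ exchange relative order at $A$ and exchange back at $B$.

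The step I expect to be the main obstacle is this last one: using minimality to control the interior of the bigon, and then checking, across the handful of shapes the bigon can take, that the four extracted values always have relative order exactly $2143$ (rather than, say, $3412$ or $3142$, which would be compatible with vexillarity). In the first direction, the only delicate point is verifying that the preliminary droop sequence can always be carried out, which is routine but requires some care about the order in which down-elbows are moved.
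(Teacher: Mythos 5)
This statement is Lemma 7.2 of Weigandt's paper and is \emph{cited}, not proved, in the present paper, so there is no in-paper proof to compare your argument against; I will assess it on its own terms.

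Your ``not vexillary $\Rightarrow$ non-reduced BPD exists'' direction has a concrete error, independent of the evacuation issue you flag as routine. You assert that the K-theoretic droop yields a BPD in which pipes $\omega(i_1)$ and $\omega(i_3)$ cross twice. But since $i_1<i_3$ and $\omega(i_1)<\omega(i_3)$, these two pipes do \emph{not} form an inversion of $\omega$, and one checks from the ``ignore crossings after the first'' rule that any pair of pipes crossing at least once physically must be an inversion pair of $\omega$; so pipes $\omega(i_1)$ and $\omega(i_3)$ never cross at all in any $P\in Pipes(\omega)$. Consistent with this, the two K-theoretic droop pictures both start from a configuration in which the two pipes already cross once and add a redundant crossing, i.e.\ they operate on an inversion pair. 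Concretely, for $\omega=21354$ (a $2143$ pattern at positions $1,2,4,5$, so $\omega(i_1)=2$ and $\omega(i_3)=5$), pipe $5$ is forced to occupy only $(5,5)$ and $(4,5)$ in every BPD, so pipes $2$ and $5$ cannot share a crossing tile; the non-reduced BPD of $21354$ that does exist has pipes $1$ and $2$ (the inversion pair $\omega(i_1),\omega(i_2)$) crossing twice, not $2$ and $5$. The same example also defeats the evacuation step you call routine: the rectangle $[1,4]\times[2,5]$ contains the Rothe elbow $(3,3)$, and the only blank cell southeast of it in $P_R(\omega)$ is $(4,4)$, which is still inside the rectangle, so the rectangle cannot be emptied by droops. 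So this direction needs a genuinely different choice of pipe pair (an inversion pair coming from the pattern, e.g.\ $\omega(i_1),\omega(i_2)$ or $\omega(i_3),\omega(i_4)$) and a more careful setup than ``evacuate the rectangle.''

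For the converse you have correctly identified the bigon normal-form step as the real obstacle; I'd add that the observation above (a doubly-crossing pair is automatically an inversion of $\omega$, so the two pipes bounding your minimal bigon satisfy $p<q$ and $\omega^{-1}(p)>\omega^{-1}(q)$) pins down half the data of the eventual $2143$ pattern, but extracting the remaining two indices from the bigon's interior is exactly the part you have not carried out, so this direction remains a sketch rather than a proof.
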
 
\noindent In particular, we have the following specialization of droop moves. \\
\begin{lemma}[\cite{weigandt2020bumpless}, Lemma 7.4]
\label{localmoves}
For a vexillary permutation $\omega$, $Pipes(\omega)$ is connected by the following set of local moves.  \\
\begin{center}
\scalebox{1}{\begin{tikzpicture}
\draw[step=.5cm,gray,very thin] (0,0) grid (1,1);
\draw[black, thick] (.25,.5) .. controls (.25,.75) .. (.5,.75);
\draw[black, thick] (.5,.75) -- (1,.75);
\draw[black, thick] (.25,0) -- (.25,.5);
\draw[black, thick] (1.75,0) .. controls (1.75,.25) .. (2,.25);
\draw[black, thick] (2,.25) .. controls (2.25,.25) .. (2.25,.5);
\draw[black, thick] (2.25,.5) .. controls (2.25,.75) .. (2.5,.75);
\draw[step=.5cm,gray,very thin] (1.4999,0) grid (2.5,1);
\draw[-stealth, black, thick] (1.1,.5) -- (1.4,.5);

\draw[step=.5cm,gray,very thin] (3.999,0) grid (5,1);
\draw[black, thick] (4,.25) -- (4.5,.25) .. controls (4.75,.25) .. (4.75,.5) -- (4.75,1);
\draw[black, thick] (5.5,.25) .. controls (5.75,.25) .. (5.75,.5) .. controls (5.75,.75) .. (6,.75) .. controls (6.25,.75) .. (6.25,1);
\draw[step=.5cm,gray,very thin] (5.4999,0) grid (6.5,1);
\draw[-stealth, black, thick] (5.1,.5) -- (5.4,.5);

\end{tikzpicture}}

\scalebox{1}{\begin{tikzpicture}
\draw[step=.5cm,gray,very thin] (0,0) grid (1,1);
\draw[black, thick] (.25,.5) .. controls (.25,.75) .. (.5,.75);
\draw[black, thick] (.5,.75) .. controls (.75,.75) .. (.75,1);
\draw[black, thick] (.25,0) -- (.25,.5);
\draw[black, thick] (1.75,0) .. controls (1.75,.25) .. (2,.25);
\draw[black, thick] (2,.25) .. controls (2.25,.25) .. (2.25,.5);
\draw[black, thick] (2.25,.5) -- (2.25,1);
\draw[step=.5cm,gray,very thin] (1.4999,0) grid (2.5,1);
\draw[-stealth, black, thick] (1.1,.5) -- (1.4,.5);

\draw[step=.5cm,gray,very thin] (3.999,0) grid (5,1);
\draw[black, thick] (4.25,.5) .. controls (4.25,.75) .. (4.5,.75);
\draw[black, thick] (4,.25) .. controls (4.25,.25) .. (4.25,.5);
\draw[black, thick] (4.5,.75) -- (5,.75);
\draw[black, thick] (6.25,.5) .. controls (6.25,.75) .. (6.5,.75);
\draw[black, thick] (6,.25) .. controls (6.25,.25) .. (6.25,.5);
\draw[black, thick] (5.5,.25) -- (6,.25);
\draw[step=.5cm,gray,very thin] (5.4999,0) grid (6.5,1);
\draw[-stealth, black, thick] (5.1,.5) -- (5.4,.5);

\end{tikzpicture}}
\end{center}
\end{lemma}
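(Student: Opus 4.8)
The plan is to reduce to the known connectivity of $RPipes(\omega)$ under droops and then cut the move set down to the displayed local moves using the vexillary hypothesis. The essential input is the preceding result (\cite{weigandt2020bumpless}, Lemma~7.2): since $\omega$ is vexillary it has no non-reduced bumpless pipe dreams, so $Pipes(\omega)=RPipes(\omega)$, and by Lam, Lee, and Shimozono \cite{lam2021stable} this set is connected using only ordinary droops --- no K-theoretic droops are needed. Each displayed move is a single droop or the inverse of one, so it carries one element of $Pipes(\omega)$ to another; hence it suffices to show that a single droop relating two diagrams in $Pipes(\omega)$ can be written as a finite composition of the displayed moves, with every intermediate tiling again lying in $Pipes(\omega)$. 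Feeding this factorization into the droop-connectivity of $Pipes(\omega)$ then yields the lemma.

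To carry out the factorization I would first recall the anatomy of a droop: it is specified by a pipe $p$ and a window $[a,a']\times[b,b']$ in which $p$ has a single corner, a down-elbow, at the northwest cell $(a,b)$, travels along the top row and the left column of the window, and leaves the rest of the window blank; the move reroutes $p$ so that its corner sits at the southeast cell $(a',b')$. I would then induct on $(a'-a)+(b'-b)$. In the base case $a'=a+1$, $b'=b+1$ the droop is a minimal droop, and a short case analysis on how $p$ enters the window (from the south or from the west) and how it leaves the corner region (heading east or heading north) matches it to one of the displayed pictures. For the inductive step I would perform one minimal droop to bring $p$'s corner one cell to the southeast and argue that what remains to be done is again a droop of $p$, but on a strictly smaller window; crucially, the blankness hypothesis for the original window is inherited by every sub-window encountered, so each minimal droop in the sequence is legal, and every intermediate tiling is a genuine element of $Pipes(\omega)$ because droops preserve both the associated permutation and reducedness. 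The reverse direction of each move is obtained by running the same argument in reverse.

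The main obstacle is the bookkeeping in the inductive step: after a minimal droop the pipe $p$ occupies new cells of the window and may now run along edges it did not touch before or abut other pipes, and I must verify that in a reduced diagram this never creates a crossing or a non-reduced pattern that would take us outside $Pipes(\omega)$ --- this is precisely where Lemma~7.2 is used. A related routine point is to confirm that the displayed pictures really do exhaust the local shapes a minimal droop can assume in a reduced BPD, by enumerating how $p$ can enter and leave the small window. If pushing this through the reducedness bookkeeping proves awkward, an alternative is a monovariant argument: the statistic $\sum_{(i,j)\in D(P)}(i+j)$ strictly decreases under any droop (by $2(a'-a)(b'-b)$ for a droop over the window above), so $P_R(\omega)$ is its unique maximizer on $Pipes(\omega)$; it would then remain to show that whenever $P\neq P_R(\omega)$ one of the displayed moves, read as an undroop, applies to $P$, which again reduces to a local analysis around a suitably extremal up-elbow of $P$. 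I would proceed with whichever of the two analyses is cleaner.
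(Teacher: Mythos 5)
The paper does not prove this lemma; it cites it verbatim as Lemma~7.4 of Weigandt, so there is no internal proof to compare against. Judging your proposal on its own merits: the reduction via Lemma~7.2 to droop-connectivity of $RPipes(\omega)=Pipes(\omega)$ is correct, but the inductive factorization of a general droop into the displayed moves has a genuine gap. After a $2\times 2$ droop at the northwest corner of the window $[a,a']\times[b,b']$, the pipe's single L-corner is replaced by a three-elbow zigzag (down-elbow at $(a+1,b)$, up-elbow at $(a+1,b+1)$, down-elbow at $(a,b+1)$), so what remains to reach the droop's target is \emph{not} ``again a droop of $p$ on a strictly smaller window,'' and the stated induction does not close. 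Relatedly, the assertion that ``each displayed move is a single droop or the inverse of one'' is false for the second pair of pictures: those moves preserve the number of up-elbows and their $2\times 2$ windows contain two elbows, violating the droop-window hypothesis; they are genuinely new slide moves, and they are needed precisely to propagate the zigzag produced by a $2\times 2$ droop across the larger window. A correct recursion does exist --- e.g.\ a droop over $[a,a']\times[b,b']$ with $a'-a\ge 2$ equals a droop over $[a,a'-1]\times[b,b']$, followed by a droop over $[a'-1,a']\times[b,b'-1]$, followed by one application of the fourth pictured move at $[a'-1,a']\times[b'-1,b']$ --- but this is structurally different from the recursion you describe and mixes all four move types.

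Your monovariant alternative is the cleaner route and the calculation that $\sum_{(i,j)\in D(P)}(i+j)$ drops by $2(a'-a)(b'-b)$ under a droop is correct; however, the key remaining claim --- that whenever $P\ne P_R(\omega)$ one of the displayed moves, read as an undroop or unslide, applies --- is itself most of the content of the lemma and is left as a ``local analysis.'' For any choice of extremal up-elbow $(i,j)$, every one of the four candidate moves requires $(i-1,j-1)$ to be blank and forces the tiles at $(i,j-1)$ and $(i-1,j)$ to be elbows rather than straight or crossing tiles; you would still need an argument (using reducedness and the vexillary hypothesis) that rules out the remaining configurations around a suitably chosen up-elbow. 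As written, neither branch of the proposal constitutes a complete proof.
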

\begin{lemma}
\label{VexPipesOrdering}
Let $\omega$ be a vexillary permutation on $n$ elements, and let $P \in Pipes(\omega)$.  
\begin{enumerate}
    \item [(i)] For any $i,j$ such that $\omega^{-1}(i) < \omega^{-1}(j)$ or $i < j$, the $i^{th}$ pipe will never contain any elbow tiles which are southeast of the $j^{th}$ pipe.  
    \item[(ii)] $P$ can be constructed from $P_R(\omega)$ by using droop moves to position pipe $\omega(n)$, then pipe $\omega(n-1)$, and so on through pipe $\omega(1)$.  Alternatively, we can construct $P$ by first positioning pipe $n$, then pipe $n-1$, and so on through pipe $1$.
\end{enumerate}
\end{lemma}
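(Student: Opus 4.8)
The plan is to prove (i) by induction on the length of a droop sequence taking the Rothe bumpless pipe dream $P_R(\omega)$ to $P$, and then to deduce (ii) from (i). Two standing facts are used throughout. Since $\omega$ is vexillary, every bumpless pipe dream in $Pipes(\omega)$ is reduced (Lemma 7.2 of \cite{weigandt2020bumpless}), so each pipe is a monotone lattice path from the bottom edge to the right edge that only steps up and to the right, any two pipes meet at most once, and every meeting is a transversal crossing at a cross tile. And every $P\in Pipes(\omega)$ is reached from $P_R(\omega)$ by a sequence of the local moves of Lemma \ref{localmoves}, each oriented as a droop that reroutes a single pipe inside a $2\times 2$ window, moving its corner to the southeast. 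I read ``the cell $(a,b)$ is southeast of pipe $j$'' as ``pipe $j$ occupies some cell $(a',b')$ with $a'<a$ and $b'<b$''; note that this collection of cells is closed under moving further southeast.

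For the base case $P=P_R(\omega)$: pipe $i$ is an L-shaped path consisting of the vertical run in column $i$ through rows $\ge\omega^{-1}(i)$, the single down-elbow at $(\omega^{-1}(i),i)$, and the horizontal run in row $\omega^{-1}(i)$ through columns $\ge i$ (with cross tiles where other pipes pass through). Inspecting the cells of pipe $j$, one finds that pipe $j$ has a cell strictly northwest of $(\omega^{-1}(i),i)$ precisely when $j<i$ and $\omega^{-1}(j)<\omega^{-1}(i)$, that is, precisely when the hypothesis of (i) fails; so (i) holds at $P_R(\omega)$, in fact with its converse. For the inductive step, suppose (i) holds for $P'$ and $P$ is obtained from $P'$ by one droop, rerouting pipe $m$ in a window $\{r,r+1\}\times\{c,c+1\}$ and replacing the elbows of pipe $m$ in that window by new elbows of pipe $m$ lying weakly southeast of them. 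If $i\ne m$, pipe $i$ is unchanged and the only concern is the case $j=m$; but a move-by-move check shows that every cell of pipe $m$ in $P$ is weakly southeast of a cell pipe $m$ occupied in $P'$, so the set of cells southeast of pipe $m$ does not grow, and hence no elbow of pipe $i$ newly becomes southeast of pipe $m$.

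If $m=i$, the new elbows of pipe $i$ lie in the window $\{r,r+1\}\times\{c,c+1\}$, and one must rule out that any of them is southeast of pipe $j$ when $\omega^{-1}(i)<\omega^{-1}(j)$ or $i<j$. Write $(r,c)$ for the down-elbow of pipe $i$ in the window in $P'$ that is being droop'd; by the inductive hypothesis it is not southeast of pipe $j$, so pipe $j$ has no cell with row $<r$ and column $<c$. Using that pipe $i$ enters the window by coming up column $c$ to $(r,c)$ and leaves it by going right along row $r$, one upgrades this to: pipe $j$ also has no cell in row $r$ with column $<c$ and no cell in column $c$ with row $<r$ (such a cell of pipe $j$ would be forced to bend into a cell with row $<r$ and column $<c$, or else to collide with pipe $i$'s path). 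Since the new elbows of pipe $i$ lie in $\{r,r+1\}\times\{c,c+1\}$, these three exclusions are exactly what is needed to conclude that none of them is southeast of pipe $j$; the remaining local moves are handled the same way (for some of them the exclusion already follows from the inductive hypothesis applied to the several old elbows in the window, without the upgrade). I expect this to be the main obstacle: a droop in the vexillary setting still creates an up-elbow, and ``not southeast of pipe $j$'' is not preserved automatically when an elbow slides to the southeast, so the inductive hypothesis must be used sharply — the old elbow's exclusion, combined with pipe $i$ entering and exiting the window along column $c$ and row $r$, is what forces pipe $j$ entirely out of the window rather than merely out of the cells strictly northwest of the old elbow.

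Part (ii) then follows by a reordering argument. Fix a droop sequence from $P_R(\omega)$ to $P$. For the first version I would move all droops of pipe $\omega(n)$ to the front, then all droops of pipe $\omega(n-1)$, and so on. This is legitimate because, by (i), pipe $\omega(m)$ never has an elbow southeast of pipe $\omega(l)$ for $l>m$ — the hypothesis of (i) holds for the pair $(\omega(m),\omega(l))$ since $\omega^{-1}(\omega(m))=m<l=\omega^{-1}(\omega(l))$ — so once pipes $\omega(m+1),\dots,\omega(n)$ occupy their final positions, all droops repositioning pipe $\omega(m)$ stay northwest of those pipes and never need to touch them; hence consecutive droops moving different pipes can be transposed into the stated order. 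The column-ordered version, positioning pipe $n$, then pipe $n-1$, and so on, is identical but uses the clause $i<j$ of the hypothesis: when pipe $m$ is positioned after pipes $m+1,\dots,n$, the pair $(m,l)$ with $l>m$ satisfies $m<l$, so by (i) pipe $m$ stays northwest of pipes $m+1,\dots,n$.
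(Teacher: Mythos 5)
Your proof is correct, and it is essentially the argument that the paper's one-line proof ("Follows immediately from the fact that $Pipes(\omega)$ is connected by the local moves of Lemma~\ref{localmoves}") is gesturing at: induct on a droop sequence from $P_R(\omega)$ and track which cells are southeast of each pipe. You supply the details the paper suppresses. The base case computation and the two branches of the inductive step (the $j=m$ case via monotonicity of the rerouting, and the $i=m$ case via the "upgrade" of the inductive hypothesis using the collision with pipe $i$'s own path) are both sound, and the deduction of (ii) from (i) by reordering the droop sequence matches the way Lemma~\ref{VexPipesOrdering} is actually invoked later (e.g.\ in Proposition~\ref{sameRowCol}).

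Two small imprecisions worth flagging. First, not every move in Lemma~\ref{localmoves} can literally be "oriented as a droop moving its corner to the southeast": the second move in the first row, read left to right, moves the rerouted pipe weakly \emph{northwest} while gaining an up-elbow. Your argument is nonetheless safe because a Rothe-to-$P$ path can always be taken to consist of LLS droops, and each LLS droop factors into one instance of the first move plus slides, all of which move the pipe weakly southeast — so the subset of moves you actually use does behave as you claim, but it would be cleaner to say this explicitly rather than attribute the property to all of Lemma~\ref{localmoves}'s moves. Second, your parenthetical that for the slide moves "the exclusion already follows from the inductive hypothesis... without the upgrade" is a bit too quick: for the down-slide (old elbows at $(r,c)$ and $(r,c+1)$, new at $(r+1,c)$ and $(r+1,c+1)$) one still needs the same kind of upgrade, ruling out a cell of pipe $j$ in row $r$ at a column $\le c$ via the collision with pipe $i$'s down-elbow at $(r,c)$. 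Neither of these affects the correctness of the overall argument.
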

\begin{proof}
Follows immediately from the fact that $Pipes(\omega)$ is connected by the local moves of Lemma \ref{localmoves}.  
\end{proof}
\noindent We can now prove Theorem \ref{main}, beginning with the following lemma. \\
\begin{lemma}
\label{construction}
For any vexillary $\omega \in S_n$, there exists a BPD such that each pipe $i$ has at least $r_{\omega^{-1}(i)}-c_{\omega^{-1}(i)}$ up-elbows.
\end{lemma}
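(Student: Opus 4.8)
The plan is to build the required bumpless pipe dream one pipe at a time, drooping each pipe far enough and in the order supplied by Lemma \ref{VexPipesOrdering}(ii), so that a droop performed at a later stage never alters a pipe that has already been positioned.

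First I would isolate the arithmetic behind the target count. For a position $i$, write $M_i=|\{j>i:\omega(j)>\omega(i)\}|$ and let $L_i$ be the length of a longest increasing subsequence of the word $\big(\omega(j): j>i,\ \omega(j)>\omega(i)\big)$. Any increasing sequence beginning with $\omega(i)$ must discard all $c_i$ entries to its right that are smaller than $\omega(i)$ and, among the remaining $M_i$ entries, discard all but an increasing subsequence; by the standard fact that the minimum number of deletions making a word increasing equals its length minus the length of a longest increasing subsequence, this gives $r_i=c_i+(M_i-L_i)$. So, reindexing by $k=\omega(i)$, it suffices to show pipe $\omega(i)$ can be given at least $M_i-L_i$ up-elbows.

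The heart of the argument is a single-pipe step: if the pipes $\omega(i+1),\dots,\omega(n)$ have already been positioned in any configuration reachable from $P_R(\omega)$ (in particular the one produced so far), then pipe $\omega(i)$ can be positioned, using only droops that move pipe $\omega(i)$, so as to acquire at least $M_i-L_i$ up-elbows. By Lemma \ref{VexPipesOrdering}(i) pipe $\omega(i)$ is confined to stay weakly northwest of every already-placed pipe; within the room these confinements leave open, the down-elbows of the pipes $\omega(j)$ with $j>i$ and $\omega(j)>\omega(i)$ form a ``staircase'' to the southeast that pipe $\omega(i)$ must route around, while the pipes $\omega(j)$ with $\omega(j)<\omega(i)$, having been drooped toward the southeast at an earlier stage, leave enough room below pipe $\omega(i)$'s corner that this routing is unobstructed from below. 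Fixing a longest southeast-chain among the staircase's down-elbows (of length $L_i$), I would droop pipe $\omega(i)$ repeatedly so that its path hugs this chain from the lower left while dipping below each of the remaining $M_i-L_i$ down-elbows; each dip is a maximal rightward run of the pipe followed by an upward turn and so contributes exactly one up-elbow. Verifying that each step is a legal droop -- that the rectangle it acts on contains no stray elbow at that stage -- is where Lemma \ref{localmoves} and the confinements of Lemma \ref{VexPipesOrdering} are used.

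Finally I would assemble the pieces: processing pipes in the order $\omega(n),\omega(n-1),\dots,\omega(1)$ from Lemma \ref{VexPipesOrdering}(ii), at stage $i$ apply the single-pipe step to give pipe $\omega(i)$ at least $M_i-L_i=r_i-c_i$ up-elbows; since the single-pipe step applies to \emph{any} reachable placement of the pipes already present, and later droops never change the up-elbow count of a pipe already placed, the resulting BPD $P$ has at least $r_{\omega^{-1}(k)}-c_{\omega^{-1}(k)}$ up-elbows on pipe $k$ for every $k$. The main obstacle is the single-pipe step: one must show that pipe $\omega(i)$ really can dodge the entire staircase -- both that enough room has been freed below it and that the weaving sequence of droops stays legal -- rather than getting stuck after a few moves, which calls for a careful induction on the staircase with running bookkeeping of the tiles each droop disturbs. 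The potential interaction between pipes is, by contrast, neutralized in advance by Lemma \ref{VexPipesOrdering}, since drooping a later pipe only enlarges the region available to pipe $\omega(i)$.
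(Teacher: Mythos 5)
Your proposal shares the paper's overall architecture---process pipes one at a time in an order from Lemma~\ref{VexPipesOrdering}(ii), drooping each pipe to give it the required number of up-elbows---and your arithmetic reduction $r_i-c_i=M_i-L_i$ (via Erd\H{o}s--Szekeres-style counting of deletions) is correct and is implicitly present in the paper's argument as well. However, there is a genuine gap, and you have in fact identified it yourself: the ``single-pipe step,'' that pipe $\omega(i)$ can always be routed so as to acquire $M_i-L_i$ up-elbows given the placement of the pipes already positioned, is asserted and described geometrically (``hug the chain from the lower left while dipping below each of the remaining $M_i-L_i$ down-elbows'') but never proved. This is the entire content of the lemma, and the description of ``dipping'' does not explain why the blank region available to pipe $\omega(i)$ is large enough to support exactly that many up-elbows, nor why the claimed sequence of droops remains legal.

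The paper closes this gap by a concrete combinatorial device you do not use: for each pipe $i$ it builds an auxiliary diagram $D_i$ from the blank squares beneath pipe $i$ in $P_R(\omega)$ (normalized to a Young diagram in an $m\times m$ grid, with the remaining cells above the main antidiagonal marked X), proves that a column containing $k$ X's forces an increasing subsequence of length $k+2$ starting at $\omega^{-1}(i)$ (using vexillarity at the step ``$i_1'<\cdots<i_b'$''), and concludes that the longest antidiagonal in the shaded region of $D_i$ has length at least $r_{\omega^{-1}(i)}-c_{\omega^{-1}(i)}$. The droop target for pipe $i$ is then that antidiagonal, and the iterated construction is justified by observing that each droop leaves the blank region under the next pipe a Young diagram with an at-least-as-long antidiagonal. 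To complete your proposal you would need to supply an argument of comparable precision for your single-pipe step; absent one, the proof does not go through as written.
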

\begin{proof}
Consider the Rothe BPD $P_R(\omega)$.  For each pipe $i$, we can define a diagram $D_i$ as follows.  Begin with an $m \times m$ grid where $m=n-\omega^{-1}(i)-c_{\omega^{-1}(i)}$, and shade the squares corresponding to the set of empty squares underneath pipe $i$ in $P_R(\omega)$, pushed as far to the top-left corner as possible.  Fill each remaining square above the main anti-diagonal with an X, and leave any square on or below the main anti-diagonal blank (see Figure \ref{fig:Top-Deg Construction}).
\begin{figure}
    \centering
    \scalebox{1}{
    \begin{tikzpicture}
    \draw[step=.5cm,gray,very thin] (0,0) grid (4,4);
    \node at (.25,-.25) {1};
    \node at (.75,-.25) {2};
    \node at (1.25,-.25) {3};
    \node at (1.75,-.25) {4};
    \node at (2.25,-.25) {5};
    \node at (2.75,-.25) {6};
    \node at (3.25,-.25) {7};
    \node at (3.75,-.25) {8};
    \node at (4.25,3.75) {1};
    \node at (4.25,3.25) {8};
    \node at (4.25,2.75) {2};
    \node at (4.25,2.25) {7};
    \node at (4.25,1.75) {3};
    \node at (4.25,1.25) {5};
    \node at (4.25,.75) {6};
    \node at (4.25,.25) {4};
    \draw[black,thick] (.25,0) -- (.25,3.5) .. controls (.25,3.75) .. (.5,3.75) -- (4,3.75);
    \draw[black,thick] (.75,0) -- (.75, 2.5) .. controls (.75,2.75) .. (1,2.75) -- (4,2.75);
    \draw[black,thick] (1.25,0) -- (1.25,1.5) .. controls (1.25,1.75) .. (1.5,1.75) -- (4,1.75);
    \draw[black,thick] (1.75,0) .. controls (1.75,.25) .. (2,.25) -- (4,.25);
    \draw[black,thick] (2.25,0) -- (2.25,1) .. controls (2.25,1.25) .. (2.5,1.25) -- (4,1.25);
    \draw[black,thick] (2.75,0) -- (2.75,.5) .. controls (2.75,.75) .. (3,.75) -- (4,.75);
    \draw[black,thick] (3.25,0) -- (3.25,2) .. controls (3.25,2.25) .. (3.5,2.25) -- (4,2.25);
    \draw[black,thick] (3.75,0) -- (3.75,3) .. controls (3.75,3.25) .. (4,3.25);
    \node at (5.5,3.75) {$D_1=$};
    \filldraw[draw=darkgray, color=lightgray] (6,3.75) rectangle (7.5,4);
    \filldraw[draw=darkgray, color=lightgray] (6,3.5) rectangle (7,3.75);
    \filldraw[draw=darkgray, color=lightgray] (6,3.25) rectangle (6.25,3.5);
    \filldraw[draw=darkgray, color=lightgray] (6,3) rectangle (6.25,3.25);
    \node[scale=.6] at (6.125, 2.625) {X};
    \node[scale=.6] at (6.125, 2.875) {X};
    \node[scale=.6] at (6.375, 2.875) {X};
    \node[scale=.6] at (6.375, 3.125) {X};
    \node[scale=.6] at (6.375, 3.375) {X};
    \node[scale=.6] at (6.625, 3.125) {X};
    \node[scale=.6] at (6.625, 3.375) {X};
    \node[scale=.6] at (6.875, 3.375) {X};
    \node[scale=.6] at (7.125, 3.625) {X};
    \draw[step=.25cm,darkgray,very thin] (5.99,2.249) grid (7.75,4);
    \node at (8.75,3.75) {$D_2=$};
    \filldraw[draw=darkgray, color=lightgray] (9.25,3.75) rectangle (10.25,4);
    \filldraw[draw=darkgray, color=lightgray] (9.25,3.5) rectangle (9.5,3.75);
    \filldraw[draw=darkgray, color=lightgray] (9.25,3.25) rectangle (9.5,3.5);
    \node[scale=.6] at (9.375, 3.125) {X};
    \node[scale=.6] at (9.625, 3.375) {X};
    \node[scale=.6] at (9.625, 3.625) {X};
    \node[scale=.6] at (9.875, 3.625) {X};
    \draw[step=.25cm,darkgray,very thin] (9.249,2.749) grid (10.5,4);
    \node at (5.5,1.75) {$D_3=$};
    \filldraw[draw=darkgray, color=lightgray] (6,1.5) rectangle (6.25,2);
    \node[scale=.6] at (6.375, 1.875) {X};
    \draw[step=.25cm,darkgray,very thin] (5.99,1.249) grid (6.75,2);
    \end{tikzpicture}}
    \caption{Sample construction of the diagrams $D_i$ in the proof of Lemma \ref{construction}.}
    \label{fig:Top-Deg Construction}
\end{figure}
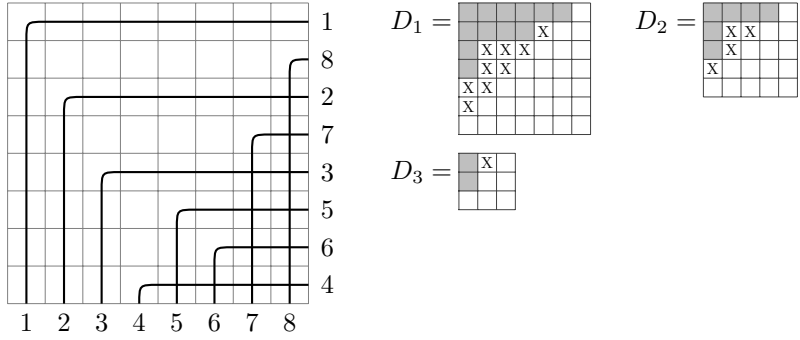 \\ \\
\noindent We will show that whenever there is a column in $D_i$ containing $k$ Xs, there must be an increasing sequence in the one-line notation of $\omega$ beginning with $i$ and having length at least $k+2$.  Suppose that column $j$ contains $k$ Xs (and by definition, $j$ blank squares).  This means that there are $k+j$ rows $l > \omega^{-1}(i)$ of $P_R(\omega)$ which contain fewer than $j$ blank squares underneath pipe $i$ but which satisfy $\omega(l) > i$.  Consider the sequence $\omega^{-1}(i_1), \omega^{-1}(i_2), \ldots, \omega^{-1}(i_{k+j})$ of all such rows, ordered from top to bottom.  Let $\omega^{-1}(i_{a})$ be the first of these rows such that $i_{a} < i_{a-1}$, and let $r$ be the number of indices $l$ in between $i$ and $i_{a-1}$ such that $\omega^{-1}(l)>\omega^{-1}(i_{a-1})$.  Note that there can be at most $j-1$ such indices since each would correspond to a blank square in position $(l,\omega^{-1}(i_{a-1}))$.  \\ \\
Now, let $i_1', \ldots, i_b'$ be the sub-sequence of $i_{a},...,i_{k+j}$ consisting of the indices $i_l$ that satisfy $i_l > i_{a-1}$, and note that since $|\{i_{a},...,i_{k+j}\}-\{i_1', \ldots, i_b'\}|$ must be smaller than $j$, we know that $b \geq k-a+2$.  Furthermore, we know that $i_1'< i_2'< \ldots < i_b'$ (otherwise, we would contradict the assumption that $\omega$ is vexillary), so $i,i_1, \ldots, i_{a-1},i_1', \ldots i_b'$ forms an increasing sequence in $\omega$ of length $k+2$.  \\ \\
For any pipe $i$, the length of the longest anti-diagonal in the shaded portion of $D_i$ is given by $m-1-k_{max}$ where $k_{max}$ is the maximal number of Xs in any column of $D_i$.  By the above argument, we conclude that this anti-diagonal has at least length $r_{\omega^{-1}(i)}-c_{\omega^{-1}(i)}$.  \\ \\
We can now construct the desired BPD.  First, consider the largest $i$ such that $D_i$ has any shaded squares, i.e. the largest $i$ such that $r_{\omega^{-1}(i)}-c_{\omega^{-1}(i)}$ is nonzero.  In $P_R(\omega)$, the set of empty squares underneath pipe $i$ will be a Young diagram matching the shaded portion of $D_i$, and we can perform droop moves so that pipe $i$ has up-elbows in each of the (at least) $r_{\omega^{-1}(i)}-c_{\omega^{-1}(i)}$ squares of the longest anti-diagonal.  Note that this moves all the blank squares that were on or above the longest anti-diagonal above pipe $i$ such that the set of blank tiles in between pipe $i$ and the closest pipe northwest of it form a new Young diagram.  In particular, if we consider a pipe $i'$ such that for every $i>i'$, the number of up-elbows in the $i^{th}$ pipe is the length of the longest diagonal in $D_i$, then the set of blank squares that are below pipe $i'$ but above every lower pipe will form a Young diagram with an anti-diagonal as long as the longest anti-diagonal of $D_{i'}$.  We can then droop pipe $i'$ so that it has up-elbows in each square of this anti-diagonal. \\ \\
Repeating this process for every pipe, thus, generates a BPD in which every pipe $i$ has at least $r_{\omega^{-1}(i)}-c_{\omega^{-1}(i)}$ up-elbows.
\end{proof}
\begin{theorem1}
Suppose that $P$ is a bumpless pipe dream corresponding to a vexillary permutation $\omega$.  Then the $i^{th}$ pipe has at most $r_{\omega^{-1}(i)}-c_{\omega^{-1}(i)}$ up-elbows.  In particular, if the marked bumpless pipe dream $(P,U(P))$ corresponds to a maximal degree term of $\mathfrak{G}_{\omega}$, the $i^{th}$ pipe will have exactly $r_{\omega^{-1}(i)}-c_{\omega^{-1}(i)}$ up-elbows.  
\end{theorem1}
\begin{proof}
Beginning with pipe $\omega(n)$ of $P$ and working upwards, we will define an top label and a side label for every down-elbow in the following way (see Figure \ref{fig:thm1labeling}).  On each pipe $i$, the first down elbow (starting from the bottom) will be given side label $i$, and the last down-elbow will be given top label $i$.  Now, for every down-elbow which does not yet have a top label, pipe $i$ will contain an up-elbow to the right of it.  This unlabeled down-elbow will take the same top label as the closest down-elbow tile beneath this up-elbow.  Similarly, every down-elbow which does not yet have a side label has an up-elbow below it, and this down-elbow will take the same side label as the closest down-elbow tile to the right of that up-elbow.
\begin{figure}
    \centering
    \begin{tikzpicture}
\draw[step=.5cm,gray,very thin] (0,0) grid (4,4);
\node at (.25,-.25) {1};
\node at (.75,-.25) {2};
\node at (1.25,-.25) {3};
\node at (1.75,-.25) {4};
\node at (2.25,-.25) {5};
\node at (2.75,-.25) {6};
\node at (3.25,-.25) {7};
\node at (3.75,-.25) {8};
\node at (4.25,3.75) {1};
\node at (4.25,3.25) {3};
\node at (4.25,2.75) {4};
\node at (4.25,2.25) {7};
\node at (4.25,1.75) {5};
\node at (4.25,1.25) {8};
\node at (4.25,.75) {2};
\node at (4.25,.25) {6};
\draw[black,thick] (.25,0) -- (.25,1) .. controls (.25,1.25) .. (.5,1.25) .. controls (.75,1.25) .. (.75,1.5) --(.75,2.5) .. controls (.75,2.75) .. (1,2.75) .. controls (1.25,2.75) .. (1.25,3) .. controls (1.25,3.25) .. (1.5,3.25) .. controls (1.75,3.25) .. (1.75,3.5) .. controls (1.75,3.75) .. (2,3.75) -- (4,3.75) ;
\draw[black,thick] (.75,0) -- (.75,.5) .. controls (.75,.75) .. (1,.75) -- (4,.75);
\draw[black,thick] (1.25,0) -- (1.25,2) .. controls (1.25,2.25) .. (1.5,2.25) .. controls (1.75,2.25) .. (1.75,2.5) .. controls (1.75,2.75) .. (2,2.75) .. controls (2.25,2.75) .. (2.25,3) .. controls (2.25,3.25) .. (2.5,3.25) -- (4,3.25);
\draw[black,thick] (1.75,0) -- (1.75,1.5) .. controls (1.75,1.75) .. (2,1.75) .. controls (2.25,1.75) .. (2.25,2) .. controls (2.25,2.25) .. (2.5,2.25) .. controls (2.75,2.25) .. (2.75,2.5) .. controls (2.75,2.75) .. (3,2.75) -- (4,2.75);
\draw[black,thick] (2.25,0) -- (2.25,1) .. controls (2.25,1.25) .. (2.5,1.25) .. controls (2.75,1.25) .. (2.75,1.5) .. controls (2.75,1.75) .. (3,1.75) -- (4,1.75);
\draw[black,thick] (2.75,0) .. controls (2.75,.25) .. (3,.25) -- (4,.25);
\draw[black,thick] (3.25,0) -- (3.25,2) .. controls (3.25,2.25) .. (3.5,2.25) -- (4,2.25);
\draw[black,thick] (3.75,0) -- (3.75, 1) .. controls (3.75,1.25) .. (4,1.25);
\node[scale=.6] at (2.6, .125) {6};
\node[scale=.6] at (2.87, .375) {6};
\node[scale=.6] at (.6, .625) {2};
\node[scale=.6] at (.87, .875) {2};
\node[scale=.6] at (3.6, 1.125) {8};
\node[scale=.6] at (3.87, 1.375) {8};
\node[scale=.7] at (2.125, 1.125) {5};
\node[scale=.6] at (2.375, 1.375) {6};
\node[scale=.6] at (2.6, 1.625) {8};
\node[scale=.7] at (2.87, 1.875) {5};
\node[scale=.6] at (3.125, 2.125) {7};
\node[scale=.6] at (3.375, 2.375) {7};
\node[scale=.6] at (1.6, 1.625) {4};
\node[scale=.6] at (1.87, 1.875) {6};
\node[scale=.6] at (2.125, 2.125) {8};
\node[scale=.7] at (2.375, 2.375) {5};
\node[scale=.6] at (2.6, 2.625) {7};
\node[scale=.6] at (2.87, 2.875) {4};
\node[scale=.6] at (1.125, 2.125) {3};
\node[scale=.6] at (1.375, 2.375) {6};
\node[scale=.6] at (1.6, 2.675) {8};
\node[scale=.7] at (1.87, 2.875) {5};
\node[scale=.6] at (2.125, 3.125) {7};
\node[scale=.6] at (2.325, 3.34) {3};
\node[scale=.6] at (.124, 1.125) {1};
\node[scale=.6] at (.375, 1.375) {2};
\node[scale=.7] at (.6, 2.675) {5};
\node[scale=.6] at (.87, 2.875) {6};
\node[scale=.6] at (1.125, 3.125) {8};
\node[scale=.7] at (1.375, 3.375) {5};
\node[scale=.6] at (1.6, 3.625) {7};
\node[scale=.6] at (1.87, 3.875) {1};
\end{tikzpicture}
    \caption{Labeling of the down-elbows in proof of Theorem \ref{main}}
    \label{fig:thm1labeling}
\end{figure}
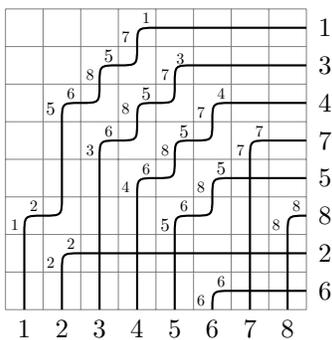 \\ \\
Associate to each up-elbow tile the ordered pair $(a,b)$ where $a$ is the side label of the down-elbow immediately above it and $b$ is the top label of the down elbow immediately to the left.  Notice that for every up-elbow in pipe $i$, the label $(a,b)$ has the properties:
\begin{itemize}
    \item [1.] $i<b<a$
    \item[2.] $\omega^{-1}(a)$, $\omega^{-1}(b) > \omega^{-1}(i)$
\end{itemize}
and that no two up-elbows from the same pipe can have labels which share the same first coordinate or the same second coordinate.  \\
\\
Let $(a_1,b_1),\ldots,(a_k,b_k)$ be the labels of all the up-elbows on pipe $i$.  By the properties above, we can sort the labels into disjoint lists of the form $(a_{j_1},b_{j_1}),\ldots,(a_{j_m},b_{j_m})$ where each $b_{j_l}=a_{j_{l+1}}$ and $a_{j_1},a_{j_2},\ldots,a_{j_m},b_{j_m}$ is a decreasing list of $m+1$ numbers which are larger than $i$ and follow $i$ in the one-line notation of the permutation.  In order to create an increasing sequence in $\omega$ beginning with $i$, we would, thus, have to remove at least $m$ elements from each such list.  This implies that we would have to remove at least $k$ such elements in total, hence, $k \leq r_{\omega^{-1}(i)}-c_{\omega^{-1}(i)}$.   \\
\\
By Lemma \ref{construction}, there exist BPDs in which every pipe has $r_{\omega^{-1}(i)}-c_{\omega^{-1}(i)}$ up-elbows, so these must be precisely the BPDs which correspond to maximal degree terms of $\mathfrak{G}_{\omega}$.
\end{proof}
\noindent Note that Theorem \ref{main} serves as an alternate proof of Theorem \ref{PSWdegree} in the special case where $\omega$ is vexillary.  Theorem \ref{main} also implies the following connection between the degree formulas given in Theorems \ref{PSWdegree} and \ref{RRWdegree}.
\begin{corollary3}
Let $\omega \in S_n$ be vexillary.  Then $\sum_{\{i : rank_{\omega}(i)=k\}}(r_{\omega^{-1}(i)}-c_{\omega^{-1}(i)})=\rho_a(\tau_k(\omega))$.
\end{corollary3}
\begin{proof}
By identifying the shaded portion of the diagrams $D_i$ with the connected components of the $\tau_k(\omega)$, this follows immediately from the proof of Lemma \ref{construction} and Theorem \ref{main}.
\end{proof}
\begin{proposition} 
\label{sameRowCol}
For a fixed vexillary permutation $\omega$, consider the set of all bumpless pipe dreams $P$ such that the marked BPD $(P,U(P))$ corresponds to a maximal degree term of $\mathfrak{G}_{\omega}$.  In the $i^{th}$ pipe, the $j^{th}$ up-elbow will either be in the same row for all maximal degree BPDs or in the same column for all maximal degree BPDs.
\end{proposition}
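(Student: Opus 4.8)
The plan is to combine the construction of maximal marked bumpless pipe dreams in the proof of Lemma~\ref{construction} with a combinatorial fact about longest antidiagonal paths in a single Young diagram. First observe that, since every pipe runs from the bottom edge of the grid to the right edge as an alternating sequence of vertical and horizontal segments, the elbow tiles met while traversing pipe~$i$ alternate down-elbow, up-elbow, down-elbow,~\dots, beginning and ending with a down-elbow. Hence two consecutive up-elbows on pipe~$i$ lie strictly to one another's northeast, so the up-elbows of pipe~$i$ form an antidiagonal path; in particular ``the $j^{\mathrm{th}}$ up-elbow of pipe~$i$'' is unambiguous, ordered along the pipe from its bottom endpoint (equivalently by decreasing row, equivalently by increasing column). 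By Theorem~\ref{main}, whenever $(P,U(P))$ is a maximal degree term of $\mathfrak{G}_\omega$, pipe~$i$ carries exactly $d_i := r_{\omega^{-1}(i)}-c_{\omega^{-1}(i)}$ up-elbows.

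Next I would reduce the statement to one fixed region. Fix pipe~$i$. By Lemma~\ref{VexPipesOrdering}(ii), each maximal $P$ is obtained from $P_R(\omega)$ by drooping the pipes in decreasing index order, and --- exactly as in the proof of Lemma~\ref{construction} --- maximality forces pipe~$i$ to be drooped so that its $d_i$ up-elbows occupy a longest antidiagonal path of the Young-diagram-shaped set $R_i$ of blank tiles lying directly below pipe~$i$ and above all lower pipes. I would show that $R_i$, \emph{as a subset of the $n\times n$ grid}, is the same for every maximal $P$: it is the shaded part of the diagram $D_i$ from the proof of Lemma~\ref{construction}, placed in a fixed location, and so depends only on $\omega$ --- this is already what makes the identification of the shaded parts of the $D_i$ with the connected components of the $\tau_k(\omega)$ (in the proof of Corollary~1.4) well posed. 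The proof would be a downward induction on the pipe index, using that a droop translates the rectangular block of blanks it affects by $(-1,-1)$ and hence preserves, cell for cell within the part of the grid strictly below any fixed pipe, the number of blank tiles on each diagonal $\{(i,j):j-i=\text{const}\}$, together with Lemma~\ref{VexPipesOrdering}(i) to pin down the ambient placement of $R_i$. It follows that the admissible grid positions for the $j^{\mathrm{th}}$ up-elbow of pipe~$i$, as $P$ ranges over all maximal BPDs, are exactly the cells occurring in position~$j$ (in decreasing-row order) of some longest antidiagonal path of one fixed Young diagram $Y_i$ in one fixed location.

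It then remains to prove the combinatorial core: for a Young diagram $Y$ all of whose longest antidiagonal paths have length $d$, and any $j$, the set $T_j$ of cells occurring in position~$j$ of a longest antidiagonal path of $Y$ lies in a single row or a single column of $Y$. For a cell $a$, let $p(a)$ (resp.\ $q(a)$) be the length of a longest antidiagonal path of $Y$ among cells strictly southwest (resp.\ northeast) of $a$; then $a\in T_j$ if and only if $p(a)=j-1$ and $q(a)=d-j$. Suppose $(r,c),(r',c')\in T_j$ differ in both coordinates; after a swap, $r>r'$. If $c<c'$ then $(r,c)$ is strictly southwest of $(r',c')$, so prepending $(r,c)$ to a longest southwest path of $(r,c)$ gives a southwest path of $(r',c')$ of length~$j$, contradicting $p(r',c')=j-1$. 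The remaining case $r>r'$, $c>c'$ is the hard one: here $(r,c')$ and $(r',c)$ both belong to $Y$, and one wants to splice a longest southwest path of $(r,c)$, then $(r,c')$, then $(r',c)$, then a longest northeast path of $(r',c')$ into an antidiagonal path of length $d+1$ --- a contradiction. Verifying that this splice can always be carried out without creating a comparable pair is where the real work lies, and I expect it to be the main obstacle; a cleaner route may be an induction on $d$ that removes an extremal row or column of $Y$ (handling separately the case in which the southwestmost cell of $Y$ must lie in every longest path) and establishes the sharper claim that each $T_j$ is a single contiguous vertical or horizontal segment. Granting the combinatorial statement for $Y=Y_i$, the cells of $T_j$ are pairwise ``in a common row or a common column'', and since any finite family of cells with that pairwise property must lie entirely in one row or entirely in one column, the proposition follows.
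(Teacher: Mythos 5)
Your proposal takes a genuinely different route from the paper and, as written, has a real gap that you yourself flag. The paper proceeds by a direct inductive argument: it fixes the lower pipes one at a time in a canonical (far down/right) position, and for each pipe it compares two BPDs where only that pipe moves, resolving a row/column disagreement at the first differing up-elbow via an explicit local-move case analysis (producing a contradiction with the maximal up-elbow count). You instead try to reduce the statement to a purely combinatorial lemma about longest antidiagonal paths in a Young diagram. That reduction is attractive and more structural, but it rests on two things neither of which is established.

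First, the reduction itself. You assert that $R_i$ (the blanks between pipe $i$ and all lower pipes, before drooping pipe $i$) is \emph{the same subset of the $n\times n$ grid} for every maximal $P$, and you sketch a justification via preservation of per-diagonal blank counts under droops. This claim is plausible and quite possibly true, but it is stronger than what the paper proves (the paper only shows that fixing the lower pipes in one canonical position still permits every maximal position of pipe $i$, which is a weaker invariance), and you would also need to check that \emph{every} longest antidiagonal of $R_i$ is realized by some maximal BPD; that achievability direction is used in your final sentence but never argued. Second, and more seriously, the combinatorial lemma you isolate --- that for a Young diagram $Y$ with longest antidiagonal length $d$, the set $T_j$ of $j$-th cells of longest antidiagonal paths lies in a single row or column --- is left unproven in exactly the case that matters: two cells $(r,c),(r',c')\in T_j$ with $r>r'$ and $c>c'$. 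You correctly dispose of the $c<c'$ case, but for the $c>c'$ case the splice you describe is not shown to produce a valid antidiagonal of length $d+1$, and you explicitly concede that ``this is where the real work lies.'' The only easy consequence there is that $f(r,c')$ and $g(r',c)$ cannot both be maximal (else concatenating through $(r,c')\prec(r',c)$ gives length $d+1$); getting from that to a contradiction requires a further recursion on $j$ whose direction can change at each step, and nothing you write controls it. Since the statement of Proposition~\ref{sameRowCol} is exactly what you need the lemma for, this gap is essential, not cosmetic. The approach is promising, and the lemma may well be true, but as submitted the argument is incomplete.
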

\begin{proof}
Let $m$ be as large as possible such that $r_{m}-c_{m}>0$.  By Lemma \ref{VexPipesOrdering}, we can construct any BPD from the Rothe BPD by first positioning pipe $\omega(m)$, then pipe $\omega(m-1)$ and so on, and we know that for any maximal degree BPD, pipe $\omega(m)$ will have precisely $r_{m}-c_{m}$ up-elbows.  Furthermore, if pipe $\omega(m)$ has $r_{m}-c_{m}$ up-elbows, the $j^{th}$ up-elbow will either be in the same row for all possible BPDs or in the same column for all possible BPDs.  To see this, consider any two BPDs $P_1$ and $P_2$ such that pipe $\omega(m)$ has $r_{m}-c_{m}$ up-elbows and (without loss of generality) no other pipes have any up-elbows.  Let $j$ be the first index such that the $j^{th}$ up-elbow of pipe $\omega(m)$ in $P_1$ is in both a different row and column from the corresponding up-elbow in $P_2$, and let $(x_1,y_1)$ and $(x_2,y_2)$ be the coordinates of this $j^{th}$ up-elbow in $P_1$ and $P_2$.  We can perform local moves on $P_1$ and $P_2$ to slide the first $j-1$ up-elbows to the lowest/leftmost position of the two without changing the remainder of the pipe (after the $j^{th}$ elbow).  Call the new BPDs $P'_1$ and $P'_2$ respectively.  Supposing without loss of generality that $x_1<x_2$, there are now two cases.
 \begin{itemize}
    \item []Case 1: $y_1>y_2$ \\ \\
    In this case, note that $P'_1$ will have an empty tile in position $(x_2,y_2)$ and the down-elbow immediately to the left of the $j^{th}$ up-elbow can be drooped into that square to form another up-elbow.  This contradicts the assumption that our pipe had the maximum number of up-elbows.  
    \item[]Case 2: $y_1<y_2$ \\ \\
    In this case, $P'_1$ will have an empty tile in positions $(x_1,y_1+1)$ and $(x_2,y_1)$.  We can use a local move to slide the $j^{th}$ up-elbow down to position $(x_2,y_1)$.  Now, if our pipe does not have an up-elbow in column $y_1+1$, we can droop the down-elbow immediately above the $j^{th}$ up-elbow into square $(x_1,y_1+1)$ giving a contradiction.  If up-elbow $j+1$ in $P'_2$ is in the same row as up-elbow $j+1$ in $P'_1$, we can up-droop the $j^{th}$ elbow of $P'_2$ to square $(x_1,y_1)$, increasing the number of up-elbows and again yielding a contradiction.  Otherwise, if none of the above cases hold, $j+1$ is now the first index where our pipe has a corresponding pair of elbows that share neither a row nor a column.  We can then repeat the above argument until one of the other cases is reached.  
 \end{itemize}
We, thus, conclude that no such up-elbow $j$ can exist. \\
\\
Now, set pipe $\omega(m)$ to the position where each up-elbow is shifted as far downward/rightward as possible and repeat the above argument for pipe $\omega(m-1)$.  Note that with pipe $\omega(m)$ positioned in this way, all positions of pipe $\omega(m-1)$ which have a maximal number of up-elbows will still be possible.  Iterating this process for all the pipes gives the result.
\end{proof}
\begin{corollary}
\label{topLocalMoves}
For a fixed vexillary permutation $\omega$, the set of all bumpless pipe dreams $P$ such that the marked BPD $(P,U(P))$ corresponds to a top degree term of $\mathfrak{G}_{\omega}$ is connected by the following local moves (and their inverses)\\
\begin{center}
\scalebox{1}{\begin{tikzpicture}
\draw[step=.5cm,gray,very thin] (0,0) grid (1,1);
\draw[black, thick] (.25,.5) .. controls (.25,.75) .. (.5,.75);
\draw[black, thick] (.5,.75) .. controls (.75,.75) .. (.75,1);
\draw[black, thick] (.25,0) -- (.25,.5);
\draw[black, thick] (1.75,0) .. controls (1.75,.25) .. (2,.25);
\draw[black, thick] (2,.25) .. controls (2.25,.25) .. (2.25,.5);
\draw[black, thick] (2.25,.5) -- (2.25,1);
\draw[step=.5cm,gray,very thin] (1.4999,0) grid (2.5,1);
\draw[-stealth, black, thick] (1.1,.5) -- (1.4,.5);

\draw[step=.5cm,gray,very thin] (3.999,0) grid (5,1);
\draw[black, thick] (4.25,.5) .. controls (4.25,.75) .. (4.5,.75);
\draw[black, thick] (4,.25) .. controls (4.25,.25) .. (4.25,.5);
\draw[black, thick] (4.5,.75) -- (5,.75);
\draw[black, thick] (6.25,.5) .. controls (6.25,.75) .. (6.5,.75);
\draw[black, thick] (6,.25) .. controls (6.25,.25) .. (6.25,.5);
\draw[black, thick] (5.5,.25) -- (6,.25);
\draw[step=.5cm,gray,very thin] (5.4999,0) grid (6.5,1);
\draw[-stealth, black, thick] (5.1,.5) -- (5.4,.5);

\end{tikzpicture}} \\
\end{center}
which do not change the total number of up-elbows.
\end{corollary}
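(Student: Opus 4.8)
The plan is to reduce connectivity of the whole top-degree locus to the single statement that every top-degree BPD can be carried, by the displayed moves alone, to one canonical top-degree BPD. First I would check directly from the two pictures that each displayed move, and its inverse, rearranges a $2\times 2$ block containing exactly one blank tile and one up-elbow tile into another such block; in particular it changes neither $|D(P)|$ nor $|U(P)|$, hence preserves the degree $|D(P)|+|U(P)|$ of the fully marked BPD $(P,U(P))$ and sends top-degree BPDs to top-degree BPDs. By contrast, the complementary family of moves in Lemma \ref{localmoves} changes $|U(P)|$ by $\pm 1$, so a path between two top-degree BPDs that uses it must leave and re-enter the top-degree locus; the goal is to avoid it altogether.

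Next I would fix the canonical top-degree BPD $P^{\max}$ produced by the pipe-by-pipe construction of Lemma \ref{VexPipesOrdering}(ii): position pipe $\omega(n)$, then $\omega(n-1)$, and so on, at each stage taking a position with the maximal number of up-elbows for the current pipe and, among all such, the one in which every up-elbow is shifted as far down and to the right as possible. As noted in the proof of Proposition \ref{sameRowCol}, fixing the earlier pipes at this extreme does not kill any maximal-up-elbow position of the current pipe, so $P^{\max}$ is well defined and is top-degree.

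The main step is then to carry an arbitrary top-degree BPD $P$ to $P^{\max}$ using only the displayed moves. Processing the pipes in the order $\omega(n),\omega(n-1),\ldots$, I would at each stage slide the current pipe's up-elbows one at a time into their $P^{\max}$ positions --- using the first displayed move to push an up-elbow down one row and the second to push it right one column; the admissibility of each slide and the preservation of the pipe's up-elbow count are exactly what the case analysis in the proof of Proposition \ref{sameRowCol} establishes. This is legitimate because, by Lemma \ref{VexPipesOrdering}(i), the current pipe stays northwest of every already-placed pipe, so the cells it slides through are the blanks of its Young-diagram region (as in Lemma \ref{construction}) and never belong to another pipe, and because each displayed move redraws only a single pipe, leaving the already-positioned pipes untouched. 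Every intermediate BPD again has maximal up-elbow count on every pipe, hence is top-degree; once $P\rightsquigarrow P^{\max}$ is known for all $P$, the top-degree locus is connected by routing through $P^{\max}$.

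The hard part is the middle of the last paragraph: verifying that ``push an up-elbow to its extreme position'' is always realizable using only the two displayed moves and their inverses, with no recourse to the complementary family (which would temporarily drop the degree). When consecutive up-elbows of a pipe lie in adjacent rows, or in adjacent columns, the pipe locally forms precisely the left-hand configuration of one of the displayed moves and the slide is immediate; in general one must first maneuver the relevant up-elbow into such a configuration, and confirming that this can be done without ever leaving the top-degree locus is the delicate point --- though the scaffolding for it is already present in the proof of Proposition \ref{sameRowCol}.
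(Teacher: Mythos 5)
Your proposal is correct and takes essentially the same approach as the paper: route every top-degree BPD through the canonical one (all up-elbows pushed far down/right), sliding one pipe at a time in the order guaranteed by Lemma \ref{VexPipesOrdering}(ii) and justifying each slide via the case analysis behind Proposition \ref{sameRowCol}. The paper's proof phrases the sliding from the canonical BPD to the target and lists pipes in the order $\omega(1),\ldots,\omega(n)$, which is just the reverse of your direction and ordering, and it leaves implicit the verification you correctly flag as the delicate point.
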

\begin{proof}
Begin with the maximal degree bumpless pipe dream $P$ in which all the up-elbows have been pushed as far downward/rightward as possible (as constructed above).  For any other maximal degree bumpless pipe dream $P'$, we can now use the appropriate local moves to slide the up-elbows of pipe $\omega(1)$ from their positions in $P$ to their positions in $P'$.  We can then repeat this for pipes $\omega(2),\ldots,\omega(n)$.
\end{proof}
\begin{remark}
Applying a move of the form above either does not change the monomial corresponding to the marked BPD $(P,U(P))$ (in the case of the first move) or decreases the exponent of $x_i$ and increases the exponent of $x_{i-1}$ by one (in the case of the second).  Accordingly, the maximal degree bumpless pipe dream $P$ in which all the up-elbows have been pushed as far downward/rightward as possible corresponds to the leading term of $\mathfrak{G}_{\omega}$ in any term order with $x_1 > x_2 > \ldots$. 
\end{remark}
\begin{example}
Let $\omega=1275463$.  The set of BPDs $P \in Pipes(\omega)$ such that $(P,U(P))$ corresponds to a maximal degree term is shown below.  A pair of BPDs are connected by an edge if and only if they differ by precisely one of the moves from Corollary \ref{topLocalMoves}.
\begin{figure}[H]
    \centering
    \scalebox{.65}{\begin{tikzpicture} 
\draw[step=.5cm,gray,very thin] (0,0) grid (3.5,3.5);
\node at (.25,-.25){1};
\node at (.75,-.25){2};
\node at (1.25,-.25){3};
\node at (1.75,-.25){4};
\node at (2.25,-.25){5};
\node at (2.75,-.25){6};
\node at (3.25,-.25){7};
\node at (3.75,3.25){1};
\node at (3.75,2.75){2};
\node at (3.75,2.25){7};
\node at (3.75,1.75){5};
\node at (3.75,1.25){4};
\node at (3.75,.75){6};
\node at (3.75,.25){3};
\draw[black, thick] (3.25,0) -- (3.25,2) .. controls (3.25,2.25) .. (3.5,2.25);
\draw[black,thick] (2.75,0) -- (2.75,.5) .. controls (2.75,.75) .. (3,.75) -- (3.5,.75);
\draw[black, thick] (2.25,0) -- (2.25, 1.5) .. controls (2.25,1.75) .. (2.5,1.75) -- (3.5,1.75);
\draw[black, thick] (1.75,0) -- (1.75,1) .. controls (1.75,1.25) .. (2,1.25) -- (3.5,1.25);
\draw[black, thick] (1.25, 0) .. controls (1.25,.25) .. (1.5,.25) -- (3.5,.25);
\draw[black, thick] (.75,0) -- (.75,.5) .. controls (.75,.75) .. (1,.75) .. controls (1.25,.75) .. (1.25,1) -- (1.25,1.5) .. controls (1.25,1.75) .. (1.5,1.75) .. controls (1.75,1.75) .. (1.75,2).. controls (1.75,2.25) .. (2,2.25) -- (2.5,2.25) .. controls (2.75,2.25) .. (2.75,2.5) .. controls (2.75,2.75) .. (3,2.75) -- (3.5,2.75);
\draw[black,thick] (.25,0) -- (.25,1) .. controls (.25,1.25) .. (.5,1.25) .. controls (.75,1.25) .. (.75,1.5) -- (.75,2) .. controls (.75,2.25) .. (1,2.25) .. controls (1.25,2.25) .. (1.25,2.5) .. controls (1.25,2.75) .. (1.5,2.75) -- (2,2.75) .. controls (2.25,2.75) .. (2.25,3) .. controls (2.25,3.25) .. (2.5,3.25) -- (3.5,3.25);
\begin{scope}[shift={(5,2)}]
\draw[step=.5cm,gray,very thin] (0,0) grid (3.5,3.5);
\node at (.25,-.25){1};
\node at (.75,-.25){2};
\node at (1.25,-.25){3};
\node at (1.75,-.25){4};
\node at (2.25,-.25){5};
\node at (2.75,-.25){6};
\node at (3.25,-.25){7};
\node at (3.75,3.25){1};
\node at (3.75,2.75){2};
\node at (3.75,2.25){7};
\node at (3.75,1.75){5};
\node at (3.75,1.25){4};
\node at (3.75,.75){6};
\node at (3.75,.25){3};
\draw[black, thick] (3.25,0) -- (3.25,2) .. controls (3.25,2.25) .. (3.5,2.25);
\draw[black,thick] (2.75,0) -- (2.75,.5) .. controls (2.75,.75) .. (3,.75) -- (3.5,.75);
\draw[black, thick] (2.25,0) -- (2.25, 1.5) .. controls (2.25,1.75) .. (2.5,1.75) -- (3.5,1.75);
\draw[black, thick] (1.75,0) -- (1.75,1) .. controls (1.75,1.25) .. (2,1.25) -- (3.5,1.25);
\draw[black, thick] (1.25, 0) .. controls (1.25,.25) .. (1.5,.25) -- (3.5,.25);
\draw[black, thick] (.75,0) -- (.75,.5) .. controls (.75,.75) .. (1,.75) .. controls (1.25,.75) .. (1.25,1) -- (1.25,1.5) .. controls (1.25,1.75) .. (1.5,1.75) .. controls (1.75,1.75) .. (1.75,2).. controls (1.75,2.25) .. (2,2.25) -- (2.5,2.25) .. controls (2.75,2.25) .. (2.75,2.5) .. controls (2.75,2.75) .. (3,2.75) -- (3.5,2.75);
\draw[black,thick] (.25,0) -- (.25,1.5) .. controls (.25,1.75) .. (.5,1.75) .. controls (.75,1.75) .. (.75,2) -- (.75,2) .. controls (.75,2.25) .. (1,2.25) .. controls (1.25,2.25) .. (1.25,2.5) .. controls (1.25,2.75) .. (1.5,2.75) -- (2,2.75) .. controls (2.25,2.75) .. (2.25,3) .. controls (2.25,3.25) .. (2.5,3.25) -- (3.5,3.25);
\end{scope}
\begin{scope}[shift={(10,5)}]
\draw[step=.5cm,gray,very thin] (0,0) grid (3.5,3.5);
\node at (.25,-.25){1};
\node at (.75,-.25){2};
\node at (1.25,-.25){3};
\node at (1.75,-.25){4};
\node at (2.25,-.25){5};
\node at (2.75,-.25){6};
\node at (3.25,-.25){7};
\node at (3.75,3.25){1};
\node at (3.75,2.75){2};
\node at (3.75,2.25){7};
\node at (3.75,1.75){5};
\node at (3.75,1.25){4};
\node at (3.75,.75){6};
\node at (3.75,.25){3};
\draw[black, thick] (3.25,0) -- (3.25,2) .. controls (3.25,2.25) .. (3.5,2.25);
\draw[black,thick] (2.75,0) -- (2.75,.5) .. controls (2.75,.75) .. (3,.75) -- (3.5,.75);
\draw[black, thick] (2.25,0) -- (2.25, 1.5) .. controls (2.25,1.75) .. (2.5,1.75) -- (3.5,1.75);
\draw[black, thick] (1.75,0) -- (1.75,1) .. controls (1.75,1.25) .. (2,1.25) -- (3.5,1.25);
\draw[black, thick] (1.25, 0) .. controls (1.25,.25) .. (1.5,.25) -- (3.5,.25);
\draw[black, thick] (.75,0) -- (.75,1) .. controls (.75,1.25) .. (1,1.25) .. controls (1.25,1.25) .. (1.25,1.5) -- (1.25,1.5) .. controls (1.25,1.75) .. (1.5,1.75) .. controls (1.75,1.75) .. (1.75,2).. controls (1.75,2.25) .. (2,2.25) -- (2.5,2.25) .. controls (2.75,2.25) .. (2.75,2.5) .. controls (2.75,2.75) .. (3,2.75) -- (3.5,2.75);
\draw[black,thick] (.25,0) -- (.25,1.5) .. controls (.25,1.75) .. (.5,1.75) .. controls (.75,1.75) .. (.75,2) -- (.75,2) .. controls (.75,2.25) .. (1,2.25) .. controls (1.25,2.25) .. (1.25,2.5) .. controls (1.25,2.75) .. (1.5,2.75) -- (2,2.75) .. controls (2.25,2.75) .. (2.25,3) .. controls (2.25,3.25) .. (2.5,3.25) -- (3.5,3.25);
\end{scope}
\begin{scope}[shift={(5,-3)}]
\draw[step=.5cm,gray,very thin] (0,0) grid (3.5,3.5);
\node at (.25,-.25){1};
\node at (.75,-.25){2};
\node at (1.25,-.25){3};
\node at (1.75,-.25){4};
\node at (2.25,-.25){5};
\node at (2.75,-.25){6};
\node at (3.25,-.25){7};
\node at (3.75,3.25){1};
\node at (3.75,2.75){2};
\node at (3.75,2.25){7};
\node at (3.75,1.75){5};
\node at (3.75,1.25){4};
\node at (3.75,.75){6};
\node at (3.75,.25){3};
\draw[black, thick] (3.25,0) -- (3.25,2) .. controls (3.25,2.25) .. (3.5,2.25);
\draw[black,thick] (2.75,0) -- (2.75,.5) .. controls (2.75,.75) .. (3,.75) -- (3.5,.75);
\draw[black, thick] (2.25,0) -- (2.25, 1.5) .. controls (2.25,1.75) .. (2.5,1.75) -- (3.5,1.75);
\draw[black, thick] (1.75,0) -- (1.75,1) .. controls (1.75,1.25) .. (2,1.25) -- (3.5,1.25);
\draw[black, thick] (1.25, 0) .. controls (1.25,.25) .. (1.5,.25) -- (3.5,.25);
\draw[black, thick] (.75,0) -- (.75,.5) .. controls (.75,.75) .. (1,.75) .. controls (1.25,.75) .. (1.25,1) -- (1.25,1.5) .. controls (1.25,1.75) .. (1.5,1.75) .. controls (1.75,1.75) .. (1.75,2).. controls (1.75,2.25) .. (2,2.25) -- (2.5,2.25) .. controls (2.75,2.25) .. (2.75,2.5) .. controls (2.75,2.75) .. (3,2.75) -- (3.5,2.75);
\draw[black,thick] (.25,0) -- (.25,1) .. controls (.25,1.25) .. (.5,1.25) .. controls (.75,1.25) .. (.75,1.5) -- (.75,2) .. controls (.75,2.25) .. (1,2.25) .. controls (1.25,2.25) .. (1.25,2.5) .. controls (1.25,2.75) .. (1.5,2.75) -- (1.5,2.75) .. controls (1.75,2.75) .. (1.75,3) .. controls (1.75,3.25) .. (2,3.25) -- (3.5,3.25);
\end{scope}
\begin{scope}[shift={(10,-5)}]
\draw[step=.5cm,gray,very thin] (0,0) grid (3.5,3.5);
\node at (.25,-.25){1};
\node at (.75,-.25){2};
\node at (1.25,-.25){3};
\node at (1.75,-.25){4};
\node at (2.25,-.25){5};
\node at (2.75,-.25){6};
\node at (3.25,-.25){7};
\node at (3.75,3.25){1};
\node at (3.75,2.75){2};
\node at (3.75,2.25){7};
\node at (3.75,1.75){5};
\node at (3.75,1.25){4};
\node at (3.75,.75){6};
\node at (3.75,.25){3};
\draw[black, thick] (3.25,0) -- (3.25,2) .. controls (3.25,2.25) .. (3.5,2.25);
\draw[black,thick] (2.75,0) -- (2.75,.5) .. controls (2.75,.75) .. (3,.75) -- (3.5,.75);
\draw[black, thick] (2.25,0) -- (2.25, 1.5) .. controls (2.25,1.75) .. (2.5,1.75) -- (3.5,1.75);
\draw[black, thick] (1.75,0) -- (1.75,1) .. controls (1.75,1.25) .. (2,1.25) -- (3.5,1.25);
\draw[black, thick] (1.25, 0) .. controls (1.25,.25) .. (1.5,.25) -- (3.5,.25);
\draw[black, thick] (.75,0) -- (.75,.5) .. controls (.75,.75) .. (1,.75) .. controls (1.25,.75) .. (1.25,1) -- (1.25,1.5) .. controls (1.25,1.75) .. (1.5,1.75) .. controls (1.75,1.75) .. (1.75,2).. controls (1.75,2.25) .. (2,2.25) -- (2,2.25) .. controls (2.25,2.25) .. (2.25,2.5) .. controls (2.25,2.75) .. (2.5,2.75) -- (3.5,2.75);
\draw[black,thick] (.25,0) -- (.25,1) .. controls (.25,1.25) .. (.5,1.25) .. controls (.75,1.25) .. (.75,1.5) -- (.75,2) .. controls (.75,2.25) .. (1,2.25) .. controls (1.25,2.25) .. (1.25,2.5) .. controls (1.25,2.75) .. (1.5,2.75) -- (1.5,2.75) .. controls (1.75,2.75) .. (1.75,3) .. controls (1.75,3.25) .. (2,3.25) -- (3.5,3.25);
\end{scope}
\begin{scope}[shift={(10,0)}]
\draw[step=.5cm,gray,very thin] (0,0) grid (3.5,3.5);
\node at (.25,-.25){1};
\node at (.75,-.25){2};
\node at (1.25,-.25){3};
\node at (1.75,-.25){4};
\node at (2.25,-.25){5};
\node at (2.75,-.25){6};
\node at (3.25,-.25){7};
\node at (3.75,3.25){1};
\node at (3.75,2.75){2};
\node at (3.75,2.25){7};
\node at (3.75,1.75){5};
\node at (3.75,1.25){4};
\node at (3.75,.75){6};
\node at (3.75,.25){3};
\draw[black, thick] (3.25,0) -- (3.25,2) .. controls (3.25,2.25) .. (3.5,2.25);
\draw[black,thick] (2.75,0) -- (2.75,.5) .. controls (2.75,.75) .. (3,.75) -- (3.5,.75);
\draw[black, thick] (2.25,0) -- (2.25, 1.5) .. controls (2.25,1.75) .. (2.5,1.75) -- (3.5,1.75);
\draw[black, thick] (1.75,0) -- (1.75,1) .. controls (1.75,1.25) .. (2,1.25) -- (3.5,1.25);
\draw[black, thick] (1.25, 0) .. controls (1.25,.25) .. (1.5,.25) -- (3.5,.25);
\draw[black, thick] (.75,0) -- (.75,.5) .. controls (.75,.75) .. (1,.75) .. controls (1.25,.75) .. (1.25,1) -- (1.25,1.5) .. controls (1.25,1.75) .. (1.5,1.75) .. controls (1.75,1.75) .. (1.75,2).. controls (1.75,2.25) .. (2,2.25) -- (2.5,2.25) .. controls (2.75,2.25) .. (2.75,2.5) .. controls (2.75,2.75) .. (3,2.75) -- (3.5,2.75);
\draw[black,thick] (.25,0) -- (.25,1.5) .. controls (.25,1.75) .. (.5,1.75) .. controls (.75,1.75) .. (.75,2) -- (.75,2) .. controls (.75,2.25) .. (1,2.25) .. controls (1.25,2.25) .. (1.25,2.5) .. controls (1.25,2.75) .. (1.5,2.75) -- (1.5,2.75) .. controls (1.75,2.75) .. (1.75,3) .. controls (1.75,3.25) .. (2,3.25) -- (3.5,3.25);
\end{scope}
\begin{scope}[shift={(15,2)}]
\draw[step=.5cm,gray,very thin] (0,0) grid (3.5,3.5);
\node at (.25,-.25){1};
\node at (.75,-.25){2};
\node at (1.25,-.25){3};
\node at (1.75,-.25){4};
\node at (2.25,-.25){5};
\node at (2.75,-.25){6};
\node at (3.25,-.25){7};
\node at (3.75,3.25){1};
\node at (3.75,2.75){2};
\node at (3.75,2.25){7};
\node at (3.75,1.75){5};
\node at (3.75,1.25){4};
\node at (3.75,.75){6};
\node at (3.75,.25){3};
\draw[black, thick] (3.25,0) -- (3.25,2) .. controls (3.25,2.25) .. (3.5,2.25);
\draw[black,thick] (2.75,0) -- (2.75,.5) .. controls (2.75,.75) .. (3,.75) -- (3.5,.75);
\draw[black, thick] (2.25,0) -- (2.25, 1.5) .. controls (2.25,1.75) .. (2.5,1.75) -- (3.5,1.75);
\draw[black, thick] (1.75,0) -- (1.75,1) .. controls (1.75,1.25) .. (2,1.25) -- (3.5,1.25);
\draw[black, thick] (1.25, 0) .. controls (1.25,.25) .. (1.5,.25) -- (3.5,.25);
\draw[black, thick] (.75,0) -- (.75,1) .. controls (.75,1.25) .. (1,1.25) .. controls (1.25,1.25) .. (1.25,1.5) -- (1.25,1.5) .. controls (1.25,1.75) .. (1.5,1.75) .. controls (1.75,1.75) .. (1.75,2).. controls (1.75,2.25) .. (2,2.25) -- (2.5,2.25) .. controls (2.75,2.25) .. (2.75,2.5) .. controls (2.75,2.75) .. (3,2.75) -- (3.5,2.75);
\draw[black,thick] (.25,0) -- (.25,1.5) .. controls (.25,1.75) .. (.5,1.75) .. controls (.75,1.75) .. (.75,2) -- (.75,2) .. controls (.75,2.25) .. (1,2.25) .. controls (1.25,2.25) .. (1.25,2.5) .. controls (1.25,2.75) .. (1.5,2.75) -- (1.5,2.75) .. controls (1.75,2.75) .. (1.75,3) .. controls (1.75,3.25) .. (2,3.25) -- (3.5,3.25);
\end{scope}
\begin{scope}[shift={(15,-3)}]
\draw[step=.5cm,gray,very thin] (0,0) grid (3.5,3.5);
\node at (.25,-.25){1};
\node at (.75,-.25){2};
\node at (1.25,-.25){3};
\node at (1.75,-.25){4};
\node at (2.25,-.25){5};
\node at (2.75,-.25){6};
\node at (3.25,-.25){7};
\node at (3.75,3.25){1};
\node at (3.75,2.75){2};
\node at (3.75,2.25){7};
\node at (3.75,1.75){5};
\node at (3.75,1.25){4};
\node at (3.75,.75){6};
\node at (3.75,.25){3};
\draw[black, thick] (3.25,0) -- (3.25,2) .. controls (3.25,2.25) .. (3.5,2.25);
\draw[black,thick] (2.75,0) -- (2.75,.5) .. controls (2.75,.75) .. (3,.75) -- (3.5,.75);
\draw[black, thick] (2.25,0) -- (2.25, 1.5) .. controls (2.25,1.75) .. (2.5,1.75) -- (3.5,1.75);
\draw[black, thick] (1.75,0) -- (1.75,1) .. controls (1.75,1.25) .. (2,1.25) -- (3.5,1.25);
\draw[black, thick] (1.25, 0) .. controls (1.25,.25) .. (1.5,.25) -- (3.5,.25);
\draw[black, thick] (.75,0) -- (.75,.5) .. controls (.75,.75) .. (1,.75) .. controls (1.25,.75) .. (1.25,1) -- (1.25,1.5) .. controls (1.25,1.75) .. (1.5,1.75) .. controls (1.75,1.75) .. (1.75,2).. controls (1.75,2.25) .. (2,2.25) -- (2,2.25) .. controls (2.25,2.25) .. (2.25,2.5) .. controls (2.25,2.75) .. (2.5,2.75) -- (3.5,2.75);
\draw[black,thick] (.25,0) -- (.25,1.5) .. controls (.25,1.75) .. (.5,1.75) .. controls (.75,1.75) .. (.75,2) -- (.75,2) .. controls (.75,2.25) .. (1,2.25) .. controls (1.25,2.25) .. (1.25,2.5) .. controls (1.25,2.75) .. (1.5,2.75) -- (1.5,2.75) .. controls (1.75,2.75) .. (1.75,3) .. controls (1.75,3.25) .. (2,3.25) -- (3.5,3.25);
\end{scope}
\begin{scope}[shift={(20,0)}]
\draw[step=.5cm,gray,very thin] (0,0) grid (3.5,3.5);
\node at (.25,-.25){1};
\node at (.75,-.25){2};
\node at (1.25,-.25){3};
\node at (1.75,-.25){4};
\node at (2.25,-.25){5};
\node at (2.75,-.25){6};
\node at (3.25,-.25){7};
\node at (3.75,3.25){1};
\node at (3.75,2.75){2};
\node at (3.75,2.25){7};
\node at (3.75,1.75){5};
\node at (3.75,1.25){4};
\node at (3.75,.75){6};
\node at (3.75,.25){3};
\draw[black, thick] (3.25,0) -- (3.25,2) .. controls (3.25,2.25) .. (3.5,2.25);
\draw[black,thick] (2.75,0) -- (2.75,.5) .. controls (2.75,.75) .. (3,.75) -- (3.5,.75);
\draw[black, thick] (2.25,0) -- (2.25, 1.5) .. controls (2.25,1.75) .. (2.5,1.75) -- (3.5,1.75);
\draw[black, thick] (1.75,0) -- (1.75,1) .. controls (1.75,1.25) .. (2,1.25) -- (3.5,1.25);
\draw[black, thick] (1.25, 0) .. controls (1.25,.25) .. (1.5,.25) -- (3.5,.25);
\draw[black, thick] (.75,0) -- (.75,1) .. controls (.75,1.25) .. (1,1.25) .. controls (1.25,1.25) .. (1.25,1.5) -- (1.25,1.5) .. controls (1.25,1.75) .. (1.5,1.75) .. controls (1.75,1.75) .. (1.75,2).. controls (1.75,2.25) .. (2,2.25) -- (2,2.25) .. controls (2.25,2.25) .. (2.25,2.5) .. controls (2.25,2.75) .. (2.5,2.75) -- (3.5,2.75);
\draw[black,thick] (.25,0) -- (.25,1.5) .. controls (.25,1.75) .. (.5,1.75) .. controls (.75,1.75) .. (.75,2) -- (.75,2) .. controls (.75,2.25) .. (1,2.25) .. controls (1.25,2.25) .. (1.25,2.5) .. controls (1.25,2.75) .. (1.5,2.75) -- (1.5,2.75) .. controls (1.75,2.75) .. (1.75,3) .. controls (1.75,3.25) .. (2,3.25) -- (3.5,3.25);
\end{scope}
\draw[black,thick] (4,2) -- (4.75,3.75);
\draw[black,thick] (4,1.5) -- (4.75,-.75);
\draw[black,thick] (9,4.25) -- (9.75,6.75);
\draw[black,thick] (9,3.5) -- (9.75,1.5);
\draw[black,thick] (9,-1.5) -- (9.75,1.25);
\draw[black,thick] (9,-1.75) -- (9.75,-4);
\draw[black,thick] (14,2) -- (14.75,3.75);
\draw[black,thick] (14,1.5) -- (14.75,-.75);
\draw[black,thick] (14,6.75) -- (14.75,4.25);
\draw[black,thick] (14,-4) -- (14.75,-1.75);
\draw[black,thick] (19,3.5) -- (19.75,1.5);
\draw[black,thick] (19,-1.5) -- (19.75,1.25);
\end{tikzpicture}}
    \label{fig:connectedexample}
\end{figure} 
\end{example}
\noindent We conjecture the following generalization of Theorem \ref{main} for all permutations.  \\ 
\begin{conjecture}
\label{conjElbows}
Let $\omega$ be any permutation, and let $P \in Pipes(\omega)$.  If $P$ is non-reduced, replace the redundant crossings with bump tiles (as shown in Figure \ref{fig:nonreduced}).  Pipe $i$ can have at most $r_{\omega^{-1}(i)}-c_{\omega^{-1}(i)}$ up-elbows, including the up-elbow portions of any bump tiles.
\end{conjecture}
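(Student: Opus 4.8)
The plan is to extend the down-elbow labeling from the proof of Theorem~\ref{main} to an arbitrary $P \in Pipes(\omega)$. After replacing each redundant crossing of $P$ by a bump tile as in Figure~\ref{fig:nonreduced}, every pipe is a monotone lattice path from the bottom edge to the right edge, and its turns, read along the pipe, alternate between \emph{down-turns} (a genuine down-elbow, or the bottom-to-right arc of a bump) and \emph{up-turns} (a genuine up-elbow, or the left-to-top arc of a bump), beginning and ending with a down-turn; hence pipe $i$ has exactly one more down-turn than up-turn, and in particular at least one down-turn. This alternation is the structural backbone of the labeling used in Theorem~\ref{main}, so I would run the same construction: processing pipes from the bottom up (pipe $\omega(n)$ first), give pipe $i$ the side label $i$ at its first down-turn and the top label $i$ at its last down-turn, propagate top labels leftward and side labels downward across up-turns exactly as before, and attach to each up-turn of pipe $i$ the pair $(a,b)$, where $a$ is the side label of the first down-turn met going up from it and $b$ is the top label of the first down-turn met going left from it.

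Granting that this assignment keeps the two properties from the proof of Theorem~\ref{main} --- each up-turn of pipe $i$ gets a pair with $i < b < a$ and $\omega^{-1}(a), \omega^{-1}(b) > \omega^{-1}(i)$, and no two up-turns of pipe $i$ share a first coordinate or share a second coordinate --- the rest goes through exactly as in the proof of Theorem~\ref{main}. The pairs on pipe $i$ split into chains $(a_{j_1},b_{j_1}), \dots, (a_{j_m},b_{j_m})$ with $b_{j_\ell} = a_{j_{\ell+1}}$, each giving a decreasing subsequence $a_{j_1} > \dots > a_{j_m} > b_{j_m}$ of length $m+1$ among the entries of the one-line notation of $\omega$ that exceed $i$ and follow $i$; the chains are pairwise disjoint, so the number of up-turns of pipe $i$ is at most the minimum number of deletions making those entries increasing, which is $r_{\omega^{-1}(i)} - c_{\omega^{-1}(i)}$. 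Since the up-turns of pipe $i$ are exactly its up-elbows together with the up-elbow portions of its bump tiles, this is the asserted bound; note that, unlike for Theorem~\ref{main}, no analogue of Lemma~\ref{construction} is needed, since the conjecture claims only an upper bound.

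The main obstacle is establishing the two properties --- above all distinctness --- once bump tiles are present. In a non-reduced BPD two pipes may interact more than once (cross and bump, bump twice, and so on), so one can no longer argue that pipe $i$ meets another given pipe at most once; checking that the label propagation is well defined, never repeats a coordinate along a fixed pipe, and yields pairs whose entries really do follow $i$ will take a careful local analysis around each redundant crossing, using planarity of the pipe network. A secondary difficulty is that the clean local-move connectivity of Lemma~\ref{localmoves} is specific to vexillary permutations, and K-theoretic droops do not change per-pipe up-turn counts in a controlled way, so an induction on the number of redundant crossings does not obviously close and the verification must be carried out directly on an arbitrary $P$. One should also confirm the boundary cases where the first or last tile of pipe $i$ is itself a redundant crossing, so that ``first down-turn'' and ``last down-turn'' remain well defined.
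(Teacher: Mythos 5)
This statement is Conjecture~\ref{conjElbows}; the paper does not prove it, and in the sentence immediately following, the paper flags exactly the approach you take as a dead end: ``the technique from the proof of Theorem~\ref{main} does not apply, even for reduced BPDs corresponding to non-vexillary permutations.'' Your proposal is therefore not a proof, and you acknowledge as much by ``granting'' the two key properties of the labeling instead of establishing them; the whole burden of the argument lies in those properties, so the proposal leaves the conjecture open.

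More importantly, your diagnosis of where the difficulty lies is misplaced. You locate the obstacle in the bump tiles, i.e.\ in non-reducedness, and suggest the problem is that two pipes may interact more than once. But the paper's remark makes clear the trouble already arises for \emph{reduced} BPDs of non-vexillary permutations, where every pair of pipes still meets at most once. The structural input you are implicitly using is Lemma~\ref{VexPipesOrdering}(i): for vexillary $\omega$, pipe $i$ never has elbows southeast of pipe $j$ when $\omega^{-1}(i)<\omega^{-1}(j)$ or $i<j$. That nesting property is what guarantees that when you process pipes from $\omega(n)$ up to $\omega(1)$, the down-elbow you reach by walking down from (or right from) an up-elbow of pipe $i$ belongs to a pipe that has \emph{already} been labeled and that genuinely appears after position $\omega^{-1}(i)$ in one-line notation (property~2). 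For non-vexillary $\omega$ this fails: a pipe with smaller $\omega^{-1}$ can sit southeast of pipe $i$, so the labeling procedure can hit an unlabeled down-elbow and is not even well-defined, and even if patched, there is no reason for $\omega^{-1}(a),\omega^{-1}(b)>\omega^{-1}(i)$ or for the coordinate-distinctness claim to hold. Bump tiles are a further complication, but they are not the root of the problem, so a ``careful local analysis around each redundant crossing'' would not rescue the argument. A genuinely new idea --- not an extension of the Theorem~\ref{main} labeling --- appears to be needed; this is precisely why the statement is left as a conjecture, supported only by verification through $S_6$.
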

\noindent This has been verified by computer for all permutations in $S_6$; however, the technique from the proof of Theorem \ref{main} does not apply, even for reduced BPDs corresponding to non-vexillary permutations.  \\ \\

\section{Support of Vexillary Grothendieck Polynomials} 
\noindent In this section, we address some properties of vexillary Grothendieck polynomials which can be proved using bumpless pipe dreams.  We begin with the vexillary case of Conjecture \ref{conjsides}. \\  
\begin{proposition}
\label{upByOne}
Let $\omega \in S_n$ be a vexillary permutation, and let $x_1^{i_1}x_2^{i_2}\ldots x_n^{i_n}$ be any monomial with a nonzero coefficient and non-maximal degree in $\mathfrak{G}_{\omega}(x_1,\ldots,x_n)$.  There exists a monomial $x_1^{j_1}x_2^{j_2}\ldots x_n^{j_n}$ with nonzero coefficient in $\mathfrak{G}_{\omega}(x_1,\ldots,x_n)$ such that $j_k=i_k+1$ for some $1 \leq k \leq n$ and $j_l=i_l$ for all other indices $l$.
\end{proposition}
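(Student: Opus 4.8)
The plan is to argue entirely with marked bumpless pipe dreams. I would first record a general principle: for any $\omega$, a monomial has nonzero coefficient in $\mathfrak{G}_{\omega}$ exactly when it occurs as the weight $\prod_{(i,j)\in D(P)\cup S}x_i$ of some $(P,S)\in MPipes(\omega)$. Indeed, the sign $(-1)^{|D(P)|+|S|-\ell(\omega)}$ attached to $(P,S)$ depends only on $|D(P)|+|S|$, which is the total degree of the weight monomial; so all marked BPDs producing a fixed monomial carry the same sign and nothing cancels. Next, when $\omega$ is vexillary every $P\in Pipes(\omega)$ is reduced, and every reduced BPD of $\omega$ has exactly $\ell(\omega)$ blank tiles (by the Lam--Lee--Shimozono bumpless pipe dream formula for the Schubert polynomial $\mathfrak{S}_{\omega}$ \cite{lam2021stable}, which is homogeneous of degree $\ell(\omega)$); hence $|D(P)|=\ell(\omega)$ for all $P\in Pipes(\omega)$, and together with Theorem \ref{main} this gives $\deg\mathfrak{G}_{\omega}=\ell(\omega)+\max_{P}|U(P)|=\ell(\omega)+\sum_i\bigl(r_{\omega^{-1}(i)}-c_{\omega^{-1}(i)}\bigr)$.

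Now take $(P,S)\in MPipes(\omega)$ with weight $x_1^{i_1}\cdots x_n^{i_n}$ of non-maximal degree. If $S\neq U(P)$, choose any up-elbow $(a,b)\in U(P)\setminus S$; then $(P,\,S\cup\{(a,b)\})$ has weight $x_1^{i_1}\cdots x_n^{i_n}\cdot x_a$, so that monomial has nonzero coefficient and is the required one, with $j_a=i_a+1$ and $j_l=i_l$ otherwise.

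The remaining case is $S=U(P)$. Then $\ell(\omega)+|U(P)|$ is the non-maximal degree of the weight, so by the degree formula above $P$ does not have the maximal number of up-elbows, and by the per-pipe bound of Theorem \ref{main} some pipe $i$ has strictly fewer than $r_{\omega^{-1}(i)}-c_{\omega^{-1}(i)}$ of them. I would fix the last such pipe $i$ in the ordering of Lemma \ref{VexPipesOrdering}(ii): then every later-positioned pipe lies weakly southeast of pipe $i$ and is already at its maximum, and the blank tiles trapped between pipe $i$ and the pipe just northwest of it form a Young-diagram region $Y$ whose longest antidiagonal -- by the computation in the proof of Lemma \ref{construction} -- is longer than the current number of up-elbows on pipe $i$. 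The goal is to perform one droop of pipe $i$ inside $Y$ that raises its up-elbow count by exactly one and does not touch any other pipe, carrying the full marking along. A short inspection of the four local moves of Lemma \ref{localmoves} shows that, with the marking $S=U(P)$ maintained, exactly two of them increase the total number of up-elbows, and each of those multiplies the weight monomial by a single variable $x_a$ (a blank tile slides up one row while a new up-elbow appears in the vacated lower row, so the two lower-row contributions cancel); the other two moves either fix the weight or move one unit of exponent from a larger variable to a smaller one. Hence it is enough to exhibit -- after at most some weight-fixing rearrangement of pipe $i$ -- a single up-elbow-raising move applicable at pipe $i$; this yields $P'$ with $(P',U(P'))\in MPipes(\omega)$ and weight $x_1^{i_1}\cdots x_n^{i_n}\cdot x_a$, finishing the proof.

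The step I expect to be the main obstacle is this last one: turning ``pipe $i$ is not maximally drooped inside $Y$'' into ``a specific up-elbow-raising local move of Lemma \ref{localmoves} is available at pipe $i$,'' i.e.\ locating a down-elbow of pipe $i$ with a blank tile immediately to its southeast and with exactly the neighbouring tiles (horizontal, vertical, blank) required by the left side of one of the productive moves, while ensuring no weight-altering move is forced along the way. The nesting of pipes from Lemma \ref{VexPipesOrdering} and the Young-diagram shape of $Y$ should make this a finite check, but it is where the real combinatorial work lies. (One might alternatively hope to deduce this case from a bare degree-increasing statement together with Theorem \ref{VexBetween}, but that would invert the intended order of the two results.)
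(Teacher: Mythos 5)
Your preliminary sign observation and your handling of the case $S\neq U(P)$ are both correct and match the paper's argument. The issue is the case $S=U(P)$, where you yourself flag the crux and leave it open: ``turning `pipe $i$ is not maximally drooped' into `a specific up-elbow-raising local move is available at pipe $i$' ... is where the real combinatorial work lies.'' That flagged step is not a finite check to be waved at --- it is essentially the entire content of the paper's proof of this case, and it is not resolved by the Young-diagram shape of the region $Y$ alone.

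Concretely, even when pipe $i$ (the paper's pipe $k$, chosen as the largest index with a deficit of up-elbows) has room to droop \emph{in principle}, a pipe $k'$ positioned northwest of pipe $k$ may have up-elbows occupying exactly the cells needed for the droop, so no local move of Lemma \ref{localmoves} at pipe $k$ is directly applicable. The paper handles this by first forming an auxiliary BPD $P'$ (resetting pipes $1,\dots,k-1$ to their Rothe positions so the droop of pipe $k$ is certainly available there), transporting the move back to $P$, and, whenever it is blocked, sliding the obstructing pipe $k'$ out of the way with further local moves --- possibly recursively, if $k'$ is itself blocked. Crucially, the paper then tracks which of these auxiliary moves preserve the weight monomial and which raise exactly one exponent, and finishes by \emph{undoing} all but one droop so that the final marked BPD $(P_2,U(P_2))$ has weight exactly $x_1^{i_1}\cdots x_n^{i_n}\cdot x_a$. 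None of this cascading obstruction-clearing nor the final undoing step appears in your sketch, and without it the claim ``perform one droop of pipe $i$ inside $Y$ ... that does not touch any other pipe'' is simply unjustified. So this is a genuine gap rather than an alternate route: you correctly set up the case split, pick the right pipe, and even record the right bookkeeping about how each local move affects the weight, but the argument stops short precisely where the difficulty begins.
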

\begin{proof}
Let $(P,S) \in MPipes(\omega)$ be a marked bumpless pipe dream corresponding to the monomial $x_1^{i_1}x_2^{i_2}\ldots x_n^{i_n}$.  If $S \neq U(P)$, then there is some up-elbow tile $(i,j)$ in $P$ which is not in $S$, and the marked BPD $(P,S \cup \{(i,j)\})$ will correspond to a monomial $x_1^{j_1}x_2^{j_2}\ldots x_n^{j_n}$ with the desired properties.\\
\\
Otherwise, if $x_1^{i_1}x_2^{i_2}\ldots x_n^{i_n}$ corresponds to a marked BPD $(P,U(P))$, there must be at least one pipe in $P$ which has fewer than $r_{\omega^{-1}(k)}-c_{\omega^{-1}(k)}$ up-elbows (since the monomial has non-maximal degree).  Select the largest $k$ such that pipe $k$ has fewer than $r_{\omega^{-1}(k)}-c_{\omega^{-1}(k)}$ up-elbows.  Note that by Theorem \ref{main}, there must be some BPD in $Pipes(\omega)$ in which pipe $k$ has more up-elbows than in $P$.  \\ \\
Define a BPD $P' \in Pipes(\omega)$ by leaving pipes $k$ through $n$ in their positions from $P$ and resetting pipes $1$ through $k-1$ to their positions from the Rothe BPD for $\omega$.  By Lemma \ref{VexPipesOrdering}, every pipe which contains any elbow tiles southeast of pipe $k$ must have the maximal number of up-elbows, so since pipe $k$ does not have the maximal number of elbow tiles, we can perform a move of one the following forms on pipe $k$ in the BPD $P'$:\\
\begin{center}
    \scalebox{1}{\begin{tikzpicture}
\draw[step=.5cm,gray,very thin] (0,0) grid (1,1);
\draw[black,thick] (.25,0) -- (.25,.5) .. controls (.25,.75) .. (.5,.75) -- (1,.75);
\draw[-stealth, black, thick] (1.25,.5) -- (1.75,.5);
\draw[step=.5cm,gray,very thin] (1.999,0) grid (3,1);
\draw[black,thick] (2.25,0) .. controls (2.25,.25) .. (2.5,.25) .. controls (2.75,.25) .. (2.75,.5) .. controls (2.75,.75) .. (3,.75);
\end{tikzpicture}}

\scalebox{1}{\begin{tikzpicture}
\draw[step=.5cm,gray,very thin] (0,0) grid (1,1);
\draw[black,thick] (0,0.25) -- (0.5,0.25).. controls (0.75,0.25) .. (0.75,0.5) -- (.75, 1);
\draw[-stealth, black, thick] (1.25,.5) -- (1.75,.5);
\draw[step=.5cm,gray,very thin] (1.999,0) grid (3,1);
\draw[black,thick] (2,.25) .. controls (2.25,.25) .. (2.25,.5) .. controls (2.25,.75) .. (2.5,.75) .. controls (2.75,.75) .. (2.75,1);
\end{tikzpicture}}

\scalebox{1}{\begin{tikzpicture}
\draw[step=.5cm,gray,very thin] (0,0) grid (3,3);
\draw[black,thick] (.25,0) -- (.25,.5) .. controls (.25,.75) .. (.5,.75) .. controls (.75,.75) .. (.75,1) .. controls (.75,1.25) ..  (1,1.25) .. controls (1.25,1.25) .. (1.25,1.5) .. controls (1.25,1.75) .. (1.5,1.75) .. controls (1.75,1.75) .. (1.75,2) .. controls (1.75,2.25) .. (2,2.25) .. controls (2.25,2.25) .. (2.25,2.5) .. controls (2.25,2.75) .. (2.5,2.75) -- (3,2.75);
\draw[-stealth, black, thick] (3.25,1.5) -- (3.75,1.5);
\draw[step=.5cm,gray,very thin] (3.999,0) grid (7,3);
\draw[black,thick] (4.25,0) .. controls (4.25,.25) .. (4.5,.25) .. controls (4.75,.25) .. (4.75,.5) .. controls (4.75,.25) .. (4.75,.5) .. controls (4.75,.75) .. (5,.75) .. controls (5.25,.75) .. (5.25, 1) .. controls (5.25,1.25) .. (5.5,1.25) .. controls (5.75,1.25) .. (5.75,1.5) .. controls (5.75,1.75) .. (6.,1.75) .. controls (6.25,1.75) .. (6.25,2) .. controls (6.25,2.25) .. (6.5,2.25).. controls (6.75,2.25) .. (6.75,2.5) .. controls (6.75,2.75) .. (7,2.75);
\end{tikzpicture}}

\scalebox{1}{\begin{tikzpicture}
\draw[step=.5cm,gray,very thin] (0,0) grid (3,3);
\draw[black,thick] (.25,0) .. controls (.25,.25) .. (.5,.25) -- (1,.25) .. controls (1.25,.25) .. (1.25,.5) .. controls (1.25,.75) .. (1.5,.75) .. controls (1.75,.75) .. (1.75, 1) .. controls (1.75,1.25) .. (2,1.25) .. controls (2.25,1.25) .. (2.25,1.5) .. controls (2.25,1.75) .. (2.5,1.75) .. controls (2.75,1.75) .. (2.75,2) -- (2.75,2.5) .. controls (2.75,2.75) .. (3,2.75);
\draw[-stealth, black, thick] (3.25,1.5) -- (3.75,1.5);
\draw[step=.5cm,gray,very thin] (3.999,0) grid (7,3);
\draw[black,thick] (4.25,0) .. controls (4.25,.25) .. (4.5,.25) .. controls (4.75,.25) .. (4.75,.5) .. controls (4.75,.25) .. (4.75,.5) .. controls (4.75,.75) .. (5,.75) .. controls (5.25,.75) .. (5.25, 1) .. controls (5.25,1.25) .. (5.5,1.25) .. controls (5.75,1.25) .. (5.75,1.5) .. controls (5.75,1.75) .. (6.,1.75) .. controls (6.25,1.75) .. (6.25,2) .. controls (6.25,2.25) .. (6.5,2.25).. controls (6.75,2.25) .. (6.75,2.5) .. controls (6.75,2.75) .. (7,2.75);

\end{tikzpicture}}
\\
\end{center}
i.e. a local droop move, a reverse local droop move, or several repetitions of one of the following local moves \\
\begin{center}
        \scalebox{1}{\begin{tikzpicture}
\draw[step=.5cm,gray,very thin] (0,0) grid (1,1);
\draw[black, thick] (.25,.5) .. controls (.25,.75) .. (.5,.75);
\draw[black, thick] (.5,.75) .. controls (.75,.75) .. (.75,1);
\draw[black, thick] (.25,0) -- (.25,.5);
\draw[black, thick] (2.25,0) .. controls (2.25,.25) .. (2.5,.25);
\draw[black, thick] (2.5,.25) .. controls (2.75,.25) .. (2.75,.5);
\draw[black, thick] (2.75,.5) -- (2.75,1);
\draw[step=.5cm,gray,very thin] (1.9999,0) grid (3,1);
\draw[-stealth, black, thick] (1.25,.5) -- (1.75,.5);
\end{tikzpicture}}

\scalebox{1}{\begin{tikzpicture}
\draw[step=.5cm,gray,very thin] (0,0) grid (1,1);
\draw[black, thick] (.25,0) .. controls (.25,.25) .. (.5,.25) .. controls (.75,.25) .. (.75,.5) -- (.75,1);
\draw[black,thick] (2.25,0) -- (2.25,.5) .. controls (2.25,.75) .. (2.5,.75) .. controls (2.75,.75) .. (2.75,1);
\draw[step=.5cm,gray,very thin] (1.9999,0) grid (3,1);
\draw[-stealth, black, thick] (1.25,.5) -- (1.75,.5);
\end{tikzpicture}}
\end{center}
followed by a local droop or a reverse local droop.  Now, we perform the equivalent move on pipe $k$ in our original BPD $P$.  If the output is a valid BPD, we will denote the new BPD as $P_1$.  Otherwise, we must have performed one of the following two types of moves:\\
\begin{center}
    \scalebox{1}{\begin{tikzpicture}
\filldraw[draw=gray, color=lightgray] (0,.5) rectangle (.5,1);
\draw[step=.5cm,gray,very thin] (0,0) grid (1,1);
\draw[black,thick] (0,0.25) -- (0.5,0.25).. controls (0.75,0.25) .. (0.75,0.5) -- (.75, 1);
\draw[-stealth, black, thick] (1.25,.5) -- (1.75,.5);
\draw[step=.5cm,gray,very thin] (1.999,0) grid (3,1);
\draw[black,thick] (2,.25) .. controls (2.25,.25) .. (2.25,.5) .. controls (2.25,.75) .. (2.5,.75) .. controls (2.75,.75) .. (2.75,1);
\end{tikzpicture}}

\scalebox{1}{\begin{tikzpicture}
\filldraw[draw=gray, color=lightgray] (.5,.5) rectangle (1,1);
\filldraw[draw=gray, color=lightgray] (1,1) rectangle (1.5,1.5);
\filldraw[draw=gray, color=lightgray] (1.5,1.5) rectangle (2,2);
\filldraw[draw=gray, color=lightgray] (2,2) rectangle (2.5,2.5);
\draw[step=.5cm,gray,very thin] (0,0) grid (3,3);
\draw[black,thick] (.25,0) .. controls (.25,.25) .. (.5,.25) -- (1,.25) .. controls (1.25,.25) .. (1.25,.5) .. controls (1.25,.75) .. (1.5,.75) .. controls (1.75,.75) .. (1.75, 1) .. controls (1.75,1.25) .. (2,1.25) .. controls (2.25,1.25) .. (2.25,1.5) .. controls (2.25,1.75) .. (2.5,1.75) .. controls (2.75,1.75) .. (2.75,2) -- (2.75,2.5) .. controls (2.75,2.75) .. (3,2.75);
\draw[-stealth, black, thick] (3.25,1.5) -- (3.75,1.5);
\draw[step=.5cm,gray,very thin] (3.999,0) grid (7,3);
\draw[black,thick] (4.25,0) .. controls (4.25,.25) .. (4.5,.25) .. controls (4.75,.25) .. (4.75,.5) .. controls (4.75,.25) .. (4.75,.5) .. controls (4.75,.75) .. (5,.75) .. controls (5.25,.75) .. (5.25, 1) .. controls (5.25,1.25) .. (5.5,1.25) .. controls (5.75,1.25) .. (5.75,1.5) .. controls (5.75,1.75) .. (6.,1.75) .. controls (6.25,1.75) .. (6.25,2) .. controls (6.25,2.25) .. (6.5,2.25).. controls (6.75,2.25) .. (6.75,2.5) .. controls (6.75,2.75) .. (7,2.75);

\end{tikzpicture}} \\
\end{center}
where at least one of the gray squares must contain an up-elbow from some other pipe $k'$.  Furthermore, since $\omega$ is vexillary (and, thus, $Pipes(\omega)$ is connected by local moves), all up-elbows which are in the gray squares must belong to a single pipe.  Consider the rightmost grey square containing one of these up-elbows from pipe $k'$, and note that the tile immediately above it must be a vertical pipe.  We can, therefore, perform one of the following local moves \\
\begin{center}
    \scalebox{1}{\begin{tikzpicture}
    \filldraw[draw=gray, color=lightgray] (0,.5) rectangle (.5,1);
\draw[step=.5cm,gray,very thin] (0,0) grid (1,1);
\draw[black, thick] (.25,0) .. controls (.25,.25) .. (.5,.25) .. controls (.75,.25) .. (.75,.5) -- (.75,1);
\draw[black,thick] (2.25,0) -- (2.25,.5) .. controls (2.25,.75) .. (2.5,.75) .. controls (2.75,.75) .. (2.75,1);
\draw[step=.5cm,gray,very thin] (1.9999,0) grid (3,1);
\draw[-stealth, black, thick] (1.25,.5) -- (1.75,.5);
\end{tikzpicture}}

\scalebox{1}{\begin{tikzpicture}
\filldraw[draw=gray, color=lightgray] (0,.5) rectangle (.5,1);
\draw[step=.5cm,gray,very thin] (0,0) grid (1,1);
\draw[black,thick] (0,0.25) -- (0.5,0.25).. controls (0.75,0.25) .. (0.75,0.5) -- (.75, 1);
\draw[-stealth, black, thick] (1.25,.5) -- (1.75,.5);
\draw[step=.5cm,gray,very thin] (1.999,0) grid (3,1);
\draw[black,thick] (2,.25) .. controls (2.25,.25) .. (2.25,.5) .. controls (2.25,.75) .. (2.5,.75) .. controls (2.75,.75) .. (2.75,1);
\end{tikzpicture}} \\
\end{center}
where the shaded square may or may not contain a tile from another pipe, in order to move our rightmost shaded up-elbow out of the way.  We can similarly perform one of the above two moves on pipe $k'$ to move the next shaded up-elbow from the right and repeat this for each shaded up-elbow in pipe $k'$.\\
\\
Again, if the output is a valid BPD, we will denote it $P_1$.  Otherwise, at least one of the moves we performed on pipe $k'$ must have moved it into a tile which was already occupied by an up-elbow from a different pipe, and we repeat the process above until our output is indeed a BPD $P_1$.\\
\\
Note that moves of the forms \\
\begin{center}
    \scalebox{1}{\begin{tikzpicture}
\draw[step=.5cm,gray,very thin] (0,0) grid (1,1);
\draw[black, thick] (.25,.5) .. controls (.25,.75) .. (.5,.75);
\draw[black, thick] (.5,.75) .. controls (.75,.75) .. (.75,1);
\draw[black, thick] (.25,0) -- (.25,.5);
\draw[black, thick] (2.25,0) .. controls (2.25,.25) .. (2.5,.25);
\draw[black, thick] (2.5,.25) .. controls (2.75,.25) .. (2.75,.5);
\draw[black, thick] (2.75,.5) -- (2.75,1);
\draw[step=.5cm,gray,very thin] (1.9999,0) grid (3,1);
\draw[-stealth, black, thick] (1.25,.5) -- (1.75,.5);
\end{tikzpicture}}

\scalebox{1}{\begin{tikzpicture}
\draw[step=.5cm,gray,very thin] (0,0) grid (1,1);
\draw[black, thick] (.25,0) .. controls (.25,.25) .. (.5,.25) .. controls (.75,.25) .. (.75,.5) -- (.75,1);
\draw[black,thick] (2.25,0) -- (2.25,.5) .. controls (2.25,.75) .. (2.5,.75) .. controls (2.75,.75) .. (2.75,1);
\draw[step=.5cm,gray,very thin] (1.9999,0) grid (3,1);
\draw[-stealth, black, thick] (1.25,.5) -- (1.75,.5);
\end{tikzpicture}}

\scalebox{1}{\begin{tikzpicture}
\draw[step=.5cm,gray,very thin] (0,0) grid (1,1);
\draw[black,thick] (.25,0) -- (.25,.5) .. controls (.25,.75) .. (.5,.75) -- (1,.75);
\draw[-stealth, black, thick] (1.25,.5) -- (1.75,.5);
\draw[step=.5cm,gray,very thin] (1.999,0) grid (3,1);
\draw[black,thick] (2.25,0) .. controls (2.25,.25) .. (2.5,.25) .. controls (2.75,.25) .. (2.75,.5) .. controls (2.75,.75) .. (3,.75);
\end{tikzpicture}}

\scalebox{1}{\begin{tikzpicture}
\draw[step=.5cm,gray,very thin] (0,0) grid (1,1);
\draw[black,thick] (0,0.25) -- (0.5,0.25).. controls (0.75,0.25) .. (0.75,0.5) -- (.75, 1);
\draw[-stealth, black, thick] (1.25,.5) -- (1.75,.5);
\draw[step=.5cm,gray,very thin] (1.999,0) grid (3,1);
\draw[black,thick] (2,.25) .. controls (2.25,.25) .. (2.25,.5) .. controls (2.25,.75) .. (2.5,.75) .. controls (2.75,.75) .. (2.75,1);
\end{tikzpicture}} \\
\end{center}
either do not change the associated monomial in $\mathfrak{G}_{\omega}(x_1,\ldots,x_n)$ or add 1 to exactly one of the exponents without changing the others.  The monomial associated to $(P_1, U(P_1))$ is, thus, divisible by the original monomial.\\
\\
Now, suppose the $l^{th}$ pipe is the last pipe on which we performed either a local droop or reverse local droop move in the procedure above.  We can construct another BPD $P_2$ by starting with $P_1$ and undoing all moves performed on pipes $l' > l$ as well as all but one droop or reverse droop from pipe $l$.  $P_2$, thus, differs from $P$ by exactly one droop / reverse droop along with moves which did not affect the corresponding monomial, and $(P_2, U(P_2))$ corresponds to the desired monomial $x_1^{j_1}x_2^{j_2}\ldots x_n^{j_n}$.
\end{proof}
\noindent The following similar result is well known for Grothendieck polynomials corresponding to all permutations.  For reference, we include a new proof using BPDs.\\
\begin{lemma}
\label{downbyone}
Let $\omega \in S_n$ be \textit{any} permutation, and let $x_1^{i_1}x_2^{i_2}\ldots x_n^{i_n}$ be any monomial with a nonzero coefficient and non-minimal degree in $\mathfrak{G}_{\omega}(x_1,\ldots ,x_n)$.  There exists a monomial $x_1^{j_1}x_2^{j_2}\ldots x_n^{j_n}$ with nonzero coefficient in $\mathfrak{G}_{\omega}(x_1,\ldots,x_n)$ such that $j_k=i_k-1$ for some $1 \leq k \leq n$ and $j_l=i_l$ for all other $l$.
\end{lemma}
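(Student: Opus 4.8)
The plan starts from Weigandt's bumpless pipe dream formula for $\mathfrak{G}_\omega$ together with the observation that it has no cancellation within a fixed degree: if $(P,S)$ and $(P',S')$ in $MPipes(\omega)$ both have weight $\prod_{(i,j)\in D(P)\cup S}x_i=\prod_{(i,j)\in D(P')\cup S'}x_i=x_1^{i_1}\cdots x_n^{i_n}$, then $|D(P)|+|S|=|D(P')|+|S'|=i_1+\cdots+i_n$, so both terms carry the sign $(-1)^{(i_1+\cdots+i_n)-\ell(\omega)}$. Hence a monomial has nonzero coefficient in $\mathfrak{G}_\omega$ exactly when it is the weight of some marked BPD, and it suffices to produce, given $(P,S)\in MPipes(\omega)$ of weight $x_1^{i_1}\cdots x_n^{i_n}$ with $i_1+\cdots+i_n>\ell(\omega)$, a marked BPD of weight $x_1^{i_1}\cdots x_n^{i_n}/x_k$ for some $k$.

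If $S\neq\emptyset$, choose any up-elbow $t\in S$, say in row $k$; then $(P,S\setminus\{t\})$ is again a legal marked BPD whose weight is $x_1^{i_1}\cdots x_n^{i_n}/x_k$ (and $i_k\ge 1$ since row $k$ contains $t$). So assume $S=\emptyset$. Since $\mathfrak{S}_\omega=\sum_{P\in RPipes(\omega)}\prod_{(i,j)\in D(P)}x_i$ is homogeneous of degree $\ell(\omega)$, every reduced BPD for $\omega$ has exactly $\ell(\omega)$ blank tiles, so $|D(P)|=i_1+\cdots+i_n>\ell(\omega)$ forces $P$ to be non-reduced. The whole problem thus reduces to the following claim: if $P\in Pipes(\omega)$ is non-reduced, there is $Q\in Pipes(\omega)$ with $|D(Q)|=|D(P)|-1$ whose multiset of blank-tile rows is that of $P$ with one entry deleted; then $(Q,\emptyset)$ has the required weight.

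To build such a $Q$ I would undo one K-theoretic droop. Since $P$ is non-reduced, it is obtained from $P_R(\omega)$ by a sequence of droops and K-theoretic droops, at least one of which is K-theoretic; letting the last K-theoretic droop in the sequence send $Q_0$ to $Q_1$, we have $|D(Q_0)|=|D(Q_1)|-1=|D(P)|-1$, and $P$ is obtained from $Q_1$ by ordinary droops alone. The idea is to reverse that K-theoretic droop, landing at $Q_0$, and then re-run a suitable modification of the ordinary droops $Q_1\to\cdots\to P$ starting from $Q_0$ to obtain $Q$.

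The main obstacle — and the reason this is less immediate than the $S\neq\emptyset$ case — is the weight bookkeeping. A forward K-theoretic droop does not merely insert one blank tile: exactly like an ordinary droop, it also slides a whole rectangular block of pre-existing blank tiles downward and enlarges it by one. So reversing an arbitrary K-theoretic droop changes the monomial by a factor of the shape $x_r^{w}/x_s^{w+1}$, not by a single variable, and the ordinary droops that follow it in the word displace further blank tiles. The crux is to choose the K-theoretic droop to reverse so that this displaced block degenerates: I expect the correct choice to be the one whose redundant crossing is extremal in $P$ (say, lowest and then rightmost), which should force the enclosing rectangle to contain no other pipes or blank tiles, and then to pin down exactly which ordinary (reverse) droops return the remaining blank tiles to their original rows. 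The classical, non-bumpless pipe dream model — where a non-reduced diagram always contains a redundant crossing that can simply be turned into an elbow tile, deleting the single variable in its row — is the heuristic that this bumpless argument has to reproduce.
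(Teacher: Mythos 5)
Your reduction to the case $S = \emptyset$ with $P$ non-reduced is correct and agrees with the paper's setup, as does your explicit remark that Weigandt's formula is cancellation-free within a fixed degree (the paper uses this implicitly). The gap, which you flag yourself, is in the non-reduced case. Rewinding the droop word $P_R(\omega) \to \cdots \to P$ to the last K-theoretic droop does produce a BPD in $Pipes(\omega)$ with one fewer blank tile, but (as you correctly observe) its blank-tile multiset differs from $D(P)$ across several rows, and the proposed repair is not established: the heuristic that an extremal choice of redundant crossing forces the K-droop's rectangle to contain no further pipes or blanks is not true in general, and ``re-running a suitable modification of the ordinary droops'' from the rewound BPD is not a well-defined operation, since the intermediate tilings along the word no longer match what is present after the reversal. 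So the argument does not close.

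The paper's proof avoids droop-word bookkeeping entirely and operates locally on $P$ itself. It replaces the topmost redundant crossing with a bump tile, which leaves $D(P)$ unchanged but makes the tiling momentarily illegal, and then reverse-droops the up-elbow half of that bump into the nearest column to its left that offers a blank or up-elbow target. If the target is blank, the result is a legal BPD in $Pipes(\omega)$ whose blank-tile multiset is that of $P$ with exactly one entry (the target's row) deleted; if the target is an up-elbow, a new bump appears and the reverse-droop step is iterated, still terminating with a single-row deletion. This chaining local reverse droop is precisely the ingredient your proposal is missing: it produces the required one-variable decrement directly, without having to reconcile the blanks shifted across multiple rows by reversing a full K-theoretic droop and its downstream ordinary droops.
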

\begin{proof}
Let $(P,S) \in MPipes (\omega)$ be a marked bumpless pipe dream corresponding to the monomial $x_1^{i_1}x_2^{i_2}\ldots x_n^{i_n}$.  If $S$ is non-empty, then $(P,S - \{(a,b)\})$ corresponds to a monomial with the desired property for any $(a,b) \in S$.  We will, therefore, assume that $S = \emptyset$.  Note that if $P$ is reduced, then $(P, \emptyset)$ will correspond to a monomial with minimal degree, so we can also assume that $P$ is non-reduced.\\
\\
By definition, this means that there is some pair of pipes in $P$ which cross each other more than once.  Select some such pair, and replace the topmost crossing between them with a bump tile.  Let $(i_1,j_1)$ be the position of this bump tile, and let $j_2$ be the closest column to the left of $j_1$ such that the up-elbow portion of the bump tile could be reverse drooped into some blank or up-elbow tile in column $j_2$.  If the pipe can be reverse drooped into a blank square $(i_2,j_2)$, then the resulting BPD $P_2$ will have the same number of empty squares as $P$ in every row besides $i_2$ and one fewer in row $i_2$.  Otherwise, if we reverse droop the pipe into an up-elbow tile, then the above process can be repeated with the new bump tile until a valid bumpless pipe dream is reached.  Again, denote the resulting BPD $P_2$, and note that $P_2$ will still have one empty square fewer than $P$ in some row $k$ and the same number of empty squares as $P$ in all other rows.\\
\\
We thus have a marked BPD $(P_2, \emptyset) \in MPipes(\omega)$ which corresponds to a monomial with the desired property.
\end{proof}
\begin{theorem}
\label{VexSides}
Let $f(x_1,\ldots,x_n)$ be any monomial in $\mathfrak{G}_{\omega}(x_1,\ldots,x_n)$ for some vexillary permutation $\omega$.  
\begin{itemize}
    \item [(i)] There exists a monomial $f_1$ in $\mathfrak{G}_{\omega}$ such that $f_1$ has minimal degree and $f_1 | f$.
    \item [(ii)] There exists a monomial $f_2$ in $\mathfrak{G}_{\omega}$ such that $f_2$ has maximal degree and $f | f_2$.
\end{itemize}
\end{theorem}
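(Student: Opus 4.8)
The plan is to prove both statements by induction on degree, simply iterating the one-step moves already established. For part (i), I would fix a marked bumpless pipe dream realizing $f$ and repeatedly invoke Lemma \ref{downbyone}: so long as the current monomial $g$ has non-minimal degree in $\mathfrak{G}_{\omega}$, the lemma produces a monomial $g'$ with nonzero coefficient whose exponent vector agrees with that of $g$ in every coordinate except one, where it is smaller by exactly $1$; hence $g' \mid g$ and $\deg g' = \deg g - 1$. Since the degrees involved are nonnegative integers, this process terminates after finitely many steps at a monomial $f_1$ of minimal degree $\deg \mathfrak{S}_{\omega} = \ell(\omega)$ (the lowest homogeneous component of $\mathfrak{G}_{\omega}$ being $\mathfrak{S}_{\omega}$). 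Transitivity of divisibility then yields $f_1 \mid f$.

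For part (ii), the argument is completely dual, using Proposition \ref{upByOne} in place of Lemma \ref{downbyone}. Starting from $f$, as long as the current monomial $h$ has non-maximal degree, Proposition \ref{upByOne} yields a monomial $h'$ with nonzero coefficient such that $h \mid h'$ and $\deg h' = \deg h + 1$. The degrees are bounded above by $\deg \mathfrak{G}_{\omega}$ (computed, for instance, via Theorem \ref{RRWdegree}, or equivalently via Theorem \ref{main}), so after finitely many steps one reaches a monomial $f_2$ of maximal degree with $f \mid f_2$, again by transitivity of divisibility.

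I expect there to be essentially no obstacle here: the substantive work was already carried out in Proposition \ref{upByOne} and Lemma \ref{downbyone}, with the vexillary hypothesis entering only through the former. The only points worth noting explicitly are that divisibility of monomials is transitive (immediate) and that the hypotheses of the two cited results are met at every step of the iteration, which holds because those results require only that the monomial have non-minimal (respectively, non-maximal) degree — and the iteration halts precisely when that condition first fails. In the write-up I would phrase (i) and (ii) as two short paragraphs mirroring the above, perhaps remarking that together with Theorem \ref{VexBetween} this pins down the divisibility structure of the support of $\mathfrak{G}_{\omega}$ between its minimal and maximal monomials.
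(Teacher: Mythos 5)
Your proposal is correct and matches the paper's proof, which likewise cites Proposition \ref{upByOne} and Lemma \ref{downbyone} and concludes by induction; you have simply written out the straightforward iteration and termination argument that the paper leaves implicit.
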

\begin{proof}
Follows by induction from Proposition \ref{upByOne} and Lemma \ref{downbyone}.
\end{proof}
\noindent We also resolve Conjecture \ref{conjbetween} in the case of vexillary permutations.  \\
\begin{theorem}
\label{VexBetween}
Let $\omega$ be a vexillary permutation, and suppose that $p_1(x_1, \ldots, x_n)$ and $p_2(x_1, \ldots, x_n)$ are monomials with nonzero coeffiecient in $\mathfrak{G}_{\omega}$ such that $p_1 | p_2$.  Then any monomial $q(x_1, \ldots, x_n)$ satisfying $p_1 | q $ and $q | p_2$ must also have nonzero coefficient in $\mathfrak{G}_{\omega}$.
\end{theorem}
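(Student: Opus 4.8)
The plan is to first remove all sign cancellation. Since a vexillary permutation admits only reduced bumpless pipe dreams (\cite{weigandt2020bumpless}, Lemma 7.2), every $P\in Pipes(\omega)$ has $|D(P)|=\ell(\omega)$, so the sign attached to a marked BPD $(P,S)$ is $(-1)^{|S|}$, and its monomial has degree $\ell(\omega)+|S|$. Hence all marked BPDs contributing to a fixed monomial carry the same sign, and a monomial has nonzero coefficient in $\mathfrak{G}_\omega$ if and only if it is the monomial of some marked BPD. So Theorem \ref{VexBetween} is equivalent to the purely combinatorial statement: if $p_1$ and $p_2$ are monomials of marked BPDs for $\omega$ with $p_1\mid p_2$, then every $q$ with $p_1\mid q\mid p_2$ is the monomial of a marked BPD.

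I would reduce this to a single-step statement. \emph{Key Lemma:} if $x^{a}$ and $x^{b}$ are monomials of marked BPDs for a vexillary $\omega$ and $a_k<b_k$ for some index $k$, then $x^{a}x_k$ is also the monomial of a marked BPD. Granting this, Theorem \ref{VexBetween} follows by induction on $\deg q-\deg p_1$. If this number is $0$ then $q=p_1$ and we are done. Otherwise $p_1\mid q$ properly, so there is an index $k$ with $(\exp p_1)_k<(\exp q)_k$; put $r=q/x_k$. Then $p_1\mid r\mid p_2$ and $\deg r-\deg p_1$ is smaller, so by induction $r$ is the monomial of a marked BPD. Applying the Key Lemma to $x^{\exp r}$ and $x^{\exp p_2}$ with this $k$ — valid because $(\exp r)_k=(\exp q)_k-1<(\exp q)_k\le(\exp p_2)_k$ — shows $q=r\,x_k$ is the monomial of a marked BPD.

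For the Key Lemma, let $(P,S)$ realize $x^{a}$. If $P$ has an up-elbow in row $k$ that is not in $S$, then adjoining it to $S$ realizes $x^{a}x_k$ and we are done. Otherwise every up-elbow in row $k$ of $P$ already lies in $S$, so, writing $N(R,k)$ for the number of blank tiles plus up-elbow tiles in row $k$ of a BPD $R$, we have $N(P,k)=a_k$, while the marked BPD for $x^{b}$ gives $\max_R N(R,k)\ge b_k>a_k$. Thus $N(\cdot,k)$ is not maximized at $P$. The plan is then to use the connectivity of $Pipes(\omega)$ under the local moves of Lemma \ref{localmoves}, processed pipe by pipe from the bottom-right as in Lemma \ref{VexPipesOrdering} and in the proof of Proposition \ref{upByOne}, to transport to $P$ a droop that raises $N(\cdot,k)$. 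This should produce a BPD $P'$ with $N(P',k)\ge a_k+1$, with $N(P',i)\ge a_i$ for every $i$, and with the blank-count of no row exceeding $a_i$ (a condition $P$ itself already satisfies, since $a_i$ counts blanks plus marked up-elbows in row $i$). One then marks $P'$ by choosing in each row exactly enough up-elbows to match the exponent of $x^{a}x_k$ there, which the three inequalities above make possible.

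The main obstacle is this transport step. In Proposition \ref{upByOne} it was enough to increase the up-elbow count of \emph{some} conveniently chosen pipe, whereas here the extra capacity must be installed in a \emph{prescribed} row $k$ while every other row's exponent is preserved. I expect this to require essentially the case analysis from the proof of Proposition \ref{upByOne} — when the droop transported to $P$ would collide with an up-elbow of another pipe, first slide that pipe aside with monomial-neutral local moves — combined with the observation that the elementary moves of Lemma \ref{localmoves} disturb the row blank-and-up-elbow counts only locally (each move alters the counts of at most two consecutive rows, by a unit, or not at all, in the spirit of the Remark following Corollary \ref{topLocalMoves}), which is what keeps the other exponents under control throughout.
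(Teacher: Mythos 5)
Your reduction to a single\hyphenation{single}-step ``Key Lemma'' and your sign observation (no cancellation for vexillary $\omega$, since every BPD is reduced) are both correct, and the reduction-by-induction argument is clean. The issue is that the Key Lemma itself is not proved: the hard case is exactly what you flag as the ``main obstacle,'' and you explicitly defer it (``The plan is then to use\ldots,'' ``This should produce\ldots,'' ``I expect this to require\ldots''). That transport step is essentially the entire substance of the theorem; without it, what you have is a reformulation, not a proof.

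Two more specific concerns. First, you state the Key Lemma in an unnecessarily strong form, requiring only $a_k<b_k$ rather than $a\le b$ coordinatewise. Your induction only ever invokes it with $a\le b$ (you apply it to $r\mid p_2$), and it is only this restricted form that is actually equivalent to the theorem; the unrestricted version is a genuinely stronger assertion about the support that you have no argument for and that does not obviously follow from your sketch (which uses only that $\max_R N(R,k)>a_k$, ignoring $b$ entirely). Second, and more importantly, your direction is the opposite of the paper's and the termination issue is harder on your side. The paper starts from $(P_2,S_2)$ and strips off an $x_i$; if a marked up-elbow in row $i$ exists it is simply removed, and otherwise one pushes a blank out of row $i$ via local moves \emph{toward the fixed target $P_1$}, so that ``distance to $P_1$'' strictly decreases and $S_2\neq\emptyset$ supplies the needed slack. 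In your direction you must manufacture a new blank or up-elbow in a \emph{prescribed} row $k$ of $P$, with no canonical target BPD to move toward, while simultaneously keeping $\mathrm{blank}_i(P')\le a_i\le N(P',i)$ for every other row $i$; the local moves do shift blanks between adjacent rows, so it is not automatic that some path to a BPD with $N(\cdot,k)>a_k$ can avoid overshooting the blank-count constraint somewhere else along the way, and you give no argument that it can. Until that is filled in, the proposal contains a genuine gap.
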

\begin{proof}
Suppose that $(P_1,S_1)$ and $(P_2,S_2)$ are marked bumpless pipe dreams in $MPipes(\omega)$ corresponding, respectively, to the monomials $p_1$ and $p_2$.  Note that if $p_1$ and $p_2$ have equal degree, then $p_1=p_2$, and the result becomes trivial.  We can, thus, assume without loss of generality that $p_1$ has strictly lower degree than $p_2$, and in particular, $p_2$ does not have minimal degree in $\mathfrak{G}_{\omega}$.  Since vexillary permutations have only reduced BPDs, this implies that $S_2$ must be non-empty.  \\ \\
Consider any $i$ such that $x_i$ has a lower exponent in $p_1$ than in $p_2$.  If $S_2$ contains a square $(i,j)$ in row $i$, then the marked BPD $(P_2,S_2 - \{(i,j)\}) \in MPipes (\omega)$ corresponds to the monomial $p_2 / x_i$.  Otherwise, there must be some square in row $i$ which is blank in $P_2$, but not in $P_1$.  It must, thus, be possible to perform one of the following types of moves on the BPD $P_2$ (where shaded squares represent squares in the set $S_2$). \\ \\
\begin{center}
    \scalebox{1}{\begin{tikzpicture}
    \filldraw[draw=gray, color=lightgray] (.5,0) rectangle (1,.5);
\draw[step=.5cm,gray,very thin] (0,0) grid (1,2);
\node at (1.25,1.75) {$i$};
\node at (1.25,.25) {$i'$};
\draw[black, thick] (.25,0) .. controls (.25,.25) .. (.5,.25) .. controls (.75,.25) .. (.75,.5) -- (.75,2);
\draw[-stealth, black, thick] (1.25,1) -- (1.75,1);
\draw[step=.5cm,gray,very thin] (1.999,0) grid (3,2);
\node at (3.25,1.75) {$i$};
\node at (3.25,.25) {$i'$};
\draw[black,thick] (2.25,0) -- (2.25,1.5) .. controls (2.25,1.75) .. (2.5,1.75) .. controls (2.75,1.75) .. (2.75,2);
\end{tikzpicture}} \hspace{1 cm}            
\scalebox{1}{\begin{tikzpicture}
\filldraw[draw=gray, color=lightgray] (.5,0) rectangle (1,.5);
\draw[step=.5cm,gray,very thin] (0,0) grid (1,2);
\node at (1.25,1.75) {$i$};
\node at (1.25,.25) {$i'$};
\draw[black, thick] (0,.25) -- (.5,.25) .. controls (.75,.25) .. (.75,.5) -- (.75,2);
\draw[-stealth, black, thick] (1.25,1) -- (1.75,1);
\draw[step=.5cm,gray,very thin] (1.999,0) grid (3,2);
\node at (3.25,1.75) {$i$};
\node at (3.25,.25) {$i'$};
\draw[black,thick] (2,.25).. controls (2.25,.25) .. (2.25,.5) -- (2.25,1.5) .. controls (2.25,1.75) .. (2.5,1.75) .. controls (2.75,1.75) .. (2.75,2);
\end{tikzpicture}} \hspace{1 cm}                
\scalebox{1}{\begin{tikzpicture}
\filldraw[draw=gray, color=lightgray] (.5,0) rectangle (1,.5);
\draw[step=.5cm,gray,very thin] (0,0) grid (1,2);
\node at (1.25,1.75) {$i$};
\node at (1.25,.25) {$i'$};
\draw[black, thick] (.25,0) .. controls (.25,.25) .. (.5,.25) .. controls (.75,.25) .. (.75,.5) -- (.75,1.5) .. controls (.75,1.75) .. (1,1.75);
\draw[-stealth, black, thick] (1.25,1) -- (1.75,1);
\draw[step=.5cm,gray,very thin] (1.999,0) grid (3,2);
\node at (3.25,1.75) {$i$};
\node at (3.25,.25) {$i'$};
\draw[black,thick] (2.25,0) -- (2.25,1.5) .. controls (2.25,1.75) .. (2.5,1.75) -- (3,1.75);
\end{tikzpicture}} 

\scalebox{1}{\begin{tikzpicture}
\filldraw[draw=gray, color=lightgray] (.5,0) rectangle (1,.5);
\draw[step=.5cm,gray,very thin] (0,0) grid (1,2);
\node at (1.25,1.75) {$i$};
\node at (1.25,.25) {$i'$};
\draw[black, thick] (0,.25) -- (.5,.25) .. controls (.75,.25) .. (.75,.5) -- (.75,1.5) .. controls (.75,1.75) .. (1,1.75);
\draw[-stealth, black, thick] (1.25,1) -- (1.75,1);
\draw[step=.5cm,gray,very thin] (1.999,0) grid (3,2);
\node at (3.25,1.75) {$i$};
\node at (3.25,.25) {$i'$};
\draw[black,thick] (2,.25).. controls (2.25,.25) .. (2.25,.5) -- (2.25,1.5) .. controls (2.25,1.75) .. (2.5,1.75) -- (3,1.75);
\end{tikzpicture}} \hspace{1 cm}  
\scalebox{1}{\begin{tikzpicture}
\filldraw[draw=gray, color=lightgray] (.5,1.5) rectangle (1,2);
\draw[step=.5cm,gray,very thin] (0,0) grid (1,2);
\node at (1.25,1.75) {$i'$};
\node at (1.25,.25) {$i$};
\draw[black,thick] (.25,0) -- (.25,1.5) .. controls (.25,1.75) .. (.5,1.75) .. controls (.75,1.75) .. (.75,2);
\draw[-stealth, black, thick] (1.25,1) -- (1.75,1);
\draw[step=.5cm,gray,very thin] (1.999,0) grid (3,2);
\node at (3.25,1.75) {$i'$};
\node at (3.25,.25) {$i$};
\draw[black,thick] (2.25,0) .. controls (2.25,.25) .. (2.5,.25) .. controls (2.75,.25) .. (2.75,.5) -- (2.75,2);
\end{tikzpicture}} \hspace{1 cm}  
\scalebox{1}{\begin{tikzpicture}
\filldraw[draw=gray, color=lightgray] (.5,1.5) rectangle (1,2);
\draw[step=.5cm,gray,very thin] (0,0) grid (1,2);
\node at (1.25,1.75) {$i'$};
\node at (1.25,.25) {$i$};
\draw[black,thick] (0,.25) .. controls (.25,.25) .. (.25,.5) -- (.25,1.5) .. controls (.25,1.75) .. (.5,1.75) .. controls (.75,1.75) .. (.75,2);
\draw[-stealth, black, thick] (1.25,1) -- (1.75,1);
\draw[step=.5cm,gray,very thin] (1.999,0) grid (3,2);
\node at (3.25,1.75) {$i'$};
\node at (3.25,.25) {$i$};
\draw[black,thick] (2,.25) -- (2.5,.25) .. controls (2.75,.25) .. (2.75,.5) -- (2.75,2);
\end{tikzpicture}} 

    \scalebox{1}{\begin{tikzpicture}
\draw[step=.5cm,gray,very thin] (0,0) grid (1,2);
\node at (1.25,1.75) {$i$};
\node at (1.25,.25) {$i'$};
\draw[black, thick] (.25,0) .. controls (.25,.25) .. (.5,.25) .. controls (.75,.25) .. (.75,.5) -- (.75,2);
\draw[-stealth, black, thick] (1.25,1) -- (1.75,1);
\draw[step=.5cm,gray,very thin] (1.999,0) grid (3,2);
\node at (3.25,1.75) {$i$};
\node at (3.25,.25) {$i'$};
\draw[black,thick] (2.25,0) -- (2.25,1.5) .. controls (2.25,1.75) .. (2.5,1.75) .. controls (2.75,1.75) .. (2.75,2);
\end{tikzpicture}} \hspace{1 cm}   
\scalebox{1}{\begin{tikzpicture}
\draw[step=.5cm,gray,very thin] (0,0) grid (1,2);
\node at (1.25,1.75) {$i$};
\node at (1.25,.25) {$i'$};
\draw[black, thick] (0,.25) -- (.5,.25) .. controls (.75,.25) .. (.75,.5) -- (.75,2);
\draw[-stealth, black, thick] (1.25,1) -- (1.75,1);
\draw[step=.5cm,gray,very thin] (1.999,0) grid (3,2);
\node at (3.25,1.75) {$i$};
\node at (3.25,.25) {$i'$};
\draw[black,thick] (2,.25).. controls (2.25,.25) .. (2.25,.5) -- (2.25,1.5) .. controls (2.25,1.75) .. (2.5,1.75) .. controls (2.75,1.75) .. (2.75,2);
\end{tikzpicture}} \hspace{1 cm}  
\scalebox{1}{\begin{tikzpicture}
\draw[step=.5cm,gray,very thin] (0,0) grid (1,2);
\node at (1.25,1.75) {$i$};
\node at (1.25,.25) {$i'$};
\draw[black, thick] (.25,0) .. controls (.25,.25) .. (.5,.25) .. controls (.75,.25) .. (.75,.5) -- (.75,1.5) .. controls (.75,1.75) .. (1,1.75);
\draw[-stealth, black, thick] (1.25,1) -- (1.75,1);
\draw[step=.5cm,gray,very thin] (1.999,0) grid (3,2);
\node at (3.25,1.75) {$i$};
\node at (3.25,.25) {$i'$};
\draw[black,thick] (2.25,0) -- (2.25,1.5) .. controls (2.25,1.75) .. (2.5,1.75) -- (3,1.75);
\end{tikzpicture}} 

\scalebox{1}{\begin{tikzpicture}
\draw[step=.5cm,gray,very thin] (0,0) grid (1,2);
\node at (1.25,1.75) {$i$};
\node at (1.25,.25) {$i'$};
\draw[black, thick] (0,.25) -- (.5,.25) .. controls (.75,.25) .. (.75,.5) -- (.75,1.5) .. controls (.75,1.75) .. (1,1.75);
\draw[-stealth, black, thick] (1.25,1) -- (1.75,1);
\draw[step=.5cm,gray,very thin] (1.999,0) grid (3,2);
\node at (3.25,1.75) {$i$};
\node at (3.25,.25) {$i'$};
\draw[black,thick] (2,.25).. controls (2.25,.25) .. (2.25,.5) -- (2.25,1.5) .. controls (2.25,1.75) .. (2.5,1.75) -- (3,1.75);
\end{tikzpicture}} \hspace{1 cm}  
\scalebox{1}{\begin{tikzpicture}
\draw[step=.5cm,gray,very thin] (0,0) grid (1,2);
\node at (1.25,1.75) {$i'$};
\node at (1.25,.25) {$i$};
\draw[black,thick] (.25,0) -- (.25,1.5) .. controls (.25,1.75) .. (.5,1.75) .. controls (.75,1.75) .. (.75,2);
\draw[-stealth, black, thick] (1.25,1) -- (1.75,1);
\draw[step=.5cm,gray,very thin] (1.999,0) grid (3,2);
\node at (3.25,1.75) {$i'$};
\node at (3.25,.25) {$i$};
\draw[black,thick] (2.25,0) .. controls (2.25,.25) .. (2.5,.25) .. controls (2.75,.25) .. (2.75,.5) -- (2.75,2);
\end{tikzpicture}} \hspace{1 cm}  
\scalebox{1}{\begin{tikzpicture}
\draw[step=.5cm,gray,very thin] (0,0) grid (1,2);
\node at (1.25,1.75) {$i'$};
\node at (1.25,.25) {$i$};
\draw[black,thick] (0,.25) .. controls (.25,.25) .. (.25,.5) -- (.25,1.5) .. controls (.25,1.75) .. (.5,1.75) .. controls (.75,1.75) .. (.75,2);
\draw[-stealth, black, thick] (1.25,1) -- (1.75,1);
\draw[step=.5cm,gray,very thin] (1.999,0) grid (3,2);
\node at (3.25,1.75) {$i'$};
\node at (3.25,.25) {$i$};
\draw[black,thick] (2,.25) -- (2.5,.25) .. controls (2.75,.25) .. (2.75,.5) -- (2.75,2);
\end{tikzpicture}} 

\scalebox{1}{\begin{tikzpicture}
\draw[step=.5cm,gray,very thin] (0,0) grid (1,2);
\node at (1.25,1.75) {$i'$};
\node at (1.25,.25) {$i$};
\draw[black,thick] (.25,0) -- (.25,1.5) .. controls (.25,1.75) .. (.5,1.75) -- (1,1.75);
\draw[-stealth, black, thick] (1.25,1) -- (1.75,1);
\draw[step=.5cm,gray,very thin] (1.999,0) grid (3,2);
\node at (3.25,1.75) {$i'$};
\node at (3.25,.25) {$i$};
\draw[black,thick] (2.25,0) .. controls (2.25,.25) .. (2.5,.25) .. controls (2.75,.25) .. (2.75,.5) -- (2.75,1.5) .. controls (2.75,1.75) .. (3,1.75);
\end{tikzpicture}} \hspace{1 cm}  
\scalebox{1}{\begin{tikzpicture}
\draw[step=.5cm,gray,very thin] (0,0) grid (1,2);
\node at (1.25,1.75) {$i'$};
\node at (1.25,.25) {$i$};
\draw[black,thick] (0,.25) .. controls (.25,.25) .. (.25,.5) -- (.25,1.5) .. controls (.25,1.75) .. (.5,1.75) -- (1,1.75);
\draw[-stealth, black, thick] (1.25,1) -- (1.75,1);
\draw[step=.5cm,gray,very thin] (1.999,0) grid (3,2);
\node at (3.25,1.75) {$i'$};
\node at (3.25,.25) {$i$};
\draw[black,thick] (2,.25) -- (2.5,.25) .. controls (2.75,.25) .. (2.75,.5) -- (2.75,1.5) .. controls (2.75,1.75) .. (3,1.75);
\end{tikzpicture}}
\end{center}
Select a move of one of these forms such that the resulting pipes will be closer to their positions in $P_1$.  Note that the first six of these possibilities result in a new marked BPD corresponding to the monomial $p_2/x_i$.  For the remaining options, the new marked BPD corresponds to $p_2 \frac{x_{i'}}{x_i}$.  In the latter case, the exponent of $x_{i'}$ is now greater than in $p_1$, so we can either remove a row $i'$ square from $S_2$ to give a marked BPD corresponding to $p_2/x_i$, or we can again perform one of the above moves to remove a blank square from row $i'$. \\ \\  
Since every step moves the pipes closer to their positions in $P_1$ and $S_2$ is non-empty, this process will eventually end with a marked BPD corresponding to $p_2/x_i$.  The result then follows by induction. 
\end{proof} 
\noindent We now address Weigandt's question regarding the leading terms in each degree of the Grothendieck polynomial \cite{WeigandtTalk}.  Let $\mathfrak{G}_{\omega}^{(k)}(\mathbf{x})$ be the homogeneous component of $\mathfrak{G}_{\omega}(\mathbf{x})$ with degree k, and consider a term order with $x_1<x_2< \cdots <x_n$.  The following results are known.  \\
\begin{proposition}[\cite{RCgraphs}, Corollary 3.9]
\label{SLeading}
The leading term of the Schubert polynomial $\mathfrak{S}_{\omega}(\mathbf{x})$ is given by $x_1^{c_1} \cdots x_n^{c_n}$.
\end{proposition}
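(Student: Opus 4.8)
The plan is to deduce this from the bumpless pipe dream formula for $\mathfrak{S}_{\omega}$ together with the droop structure on $RPipes(\omega)$. Restricting the formula for $\mathfrak{G}_{\omega}$ to its lowest degree part --- equivalently, to the marked BPDs $(P,\emptyset)$ with $P$ reduced, which are exactly the terms of degree $\ell(\omega)$ and all carry sign $+1$ --- recovers the Lam--Lee--Shimozono formula $\mathfrak{S}_{\omega}(x)=\sum_{P\in RPipes(\omega)}\prod_{(i,j)\in D(P)}x_i=\sum_{P\in RPipes(\omega)}x^{d(P)}$, where $d(P)_i$ denotes the number of blank tiles in row $i$ of $P$ \cite{lam2021stable}. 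In particular $\mathfrak{S}_{\omega}$ is a sum of monomials each with coefficient $+1$, so no cancellation occurs and a monomial appears iff it equals $d(P)$ for some reduced $P$. The blank tiles of the Rothe BPD $P_R(\omega)$ are exactly the cells of the Rothe diagram of $\omega$, whose row sums are the Lehmer code; hence $d(P_R(\omega))=(c_1,\dots,c_n)$ and $x_1^{c_1}\cdots x_n^{c_n}$ occurs in $\mathfrak{S}_{\omega}$. It then remains to show this monomial is $\ge$ every monomial of $\mathfrak{S}_{\omega}$ in any term order with $x_1<x_2<\cdots<x_n$.

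For that I would analyze the effect of a single droop on the vector $d(P)$. Every $P\in RPipes(\omega)$ is reachable from $P_R(\omega)$ by a sequence of droops \cite{lam2021stable}, and a droop moves the corner of one pipe from a position $(r_1,c_1)$ to a position $(r_2,c_2)$ with $r_1<r_2$ and $c_1<c_2$. Tracking which tiles of the rectangle $[r_1,r_2]\times[c_1,c_2]$ change --- and using that the global blank count is preserved, since every reduced BPD has exactly $\ell(\omega)$ blanks --- one checks that the only affected rows are $r_1$, whose blank count increases by some $t\ge 1$, and $r_2$, whose blank count decreases by the same $t$. Consequently a droop never increases any bottom partial sum $\sigma_k(P):=d(P)_{n-k+1}+\cdots+d(P)_n$, so $\sigma_k(P)\le\sigma_k(P_R(\omega))=c_{n-k+1}+\cdots+c_n$ for all $P\in RPipes(\omega)$ and all $k$.

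To finish, fix a term order with $x_1<\cdots<x_n$ and suppose $x^{d(P)}>x^{c}$ for some monomial $x^{d(P)}$ of $\mathfrak{S}_{\omega}$, where $c=(c_1,\dots,c_n)$. Since $\sum_i d(P)_i=\ell(\omega)=\sum_i c_i$, the inequalities $\sigma_k(P)\le\sigma_k(P_R(\omega))$ say exactly that $d(P)$ lies weakly below $c$ in the dominance order, i.e.\ $d(P)$ is obtained from $c$ by repeatedly transferring one unit from a coordinate of larger index to a coordinate of smaller index. Each such transfer replaces a monomial $x^{a}x_i$ by $x^{a}x_j$ with $j<i$, which is strictly smaller in the order because $x_j<x_i$; hence $x^{d(P)}\le x^{c}$, a contradiction. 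Therefore $x_1^{c_1}\cdots x_n^{c_n}$ is the leading term, as claimed.

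The main obstacle is the local droop analysis in the second paragraph: one must carefully account for every tile in the droop rectangle --- in particular crossings, which toggle differently from blank tiles --- to confirm that blanks only ever migrate from a lower row to a higher row and that no intermediate row is touched. The reduction to the dominance order in the last step, and the fact that reduced BPDs have $\ell(\omega)$ blank tiles, are standard.
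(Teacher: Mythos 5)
Your argument is correct. The paper does not prove this proposition --- it cites it as Corollary 3.9 of Bergeron--Billey \cite{RCgraphs}, whose original proof lives in the RC-graph (ordinary pipe dream) model and uses chute/ladder moves. You reprove it instead through bumpless pipe dreams, following the same architecture: identify the one distinguished object with exponent vector $c$ (for you $P_R(\omega)$, for them the ``bottom'' RC-graph), and check that the connecting local moves (droops versus chute moves) only push weight from a higher-indexed row to a lower-indexed one, so that every other monomial is strictly smaller under an order with $x_1<\cdots<x_n$. The step you flag as the main obstacle does check out: in the droop rectangle, for each intermediate row $r$ with $r_1<r<r_2$, the cell $(r,c_1)$ changes from vertical to blank ($+1$) exactly when $(r,c_2)$ changes from blank to vertical ($-1$), and from cross to horizontal ($0$) exactly when $(r,c_2)$ changes from horizontal to cross ($0$), so intermediate rows net to zero, and the $+1$ at $(r_1,c_1)$ together with the columns without a crossing pipe give the $+t$ in row $r_1$ matching the $-t$ in row $r_2$. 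Since $t\ge 1$ the decrease in some bottom partial sum is strict, which in fact shows more than you state: $P_R(\omega)$ is the \emph{unique} reduced BPD with exponent vector $c$, so the leading coefficient is $1$ --- worth saying explicitly, since the proposition asserts the leading term and not merely the leading monomial.
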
 
\begin{theorem}[\cite{Speyerslides}, Theorem 1.1]
\label{GLeading}
The leading term of the Grothendieck polynomial $\mathfrak{G}_{\omega}(\mathbf{x})$ is a scalar multiple of $x_1^{r_1} \cdots x_n^{r_n}$.
\end{theorem}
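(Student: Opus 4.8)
The plan is to deduce the statement from the marked bumpless pipe dream model together with the degree formula (Theorem~\ref{PSWdegree}); the argument is cleanest when $\omega$ is vexillary, where all the structural results of this section apply, and I indicate at the end what extra input the general case requires. Write $d:=\deg\mathfrak{G}_\omega=\sum_{i}r_i$ (Theorem~\ref{PSWdegree}), so that $x_1^{r_1}\cdots x_n^{r_n}$ is a monomial of top degree. The preliminary observation is a sign computation: in $\mathfrak{G}_\omega=\sum_{(P,S)\in MPipes(\omega)}(-1)^{|D(P)|+|S|-\ell(\omega)}\prod_{(i,j)\in D(P)\cup S}x_i$, the degree of the monomial of $(P,S)$ is $|D(P)|+|S|$, and every $(P,S)$ with $|D(P)|+|S|=d$ contributes with the same sign $(-1)^{d-\ell(\omega)}$. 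Hence for any top-degree exponent vector $e$, the coefficient of $x^e$ in $\mathfrak{G}_\omega$ is $(-1)^{d-\ell(\omega)}$ times the number of marked BPDs realizing $e$; in particular nothing cancels in top degree, $x^e$ occurs exactly when such a BPD exists, and this already accounts for the ``scalar multiple'' in the statement (the coefficient of $x^r$ need not be $\pm1$).

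Granting this, the proof splits into maximality and realization. For \emph{maximality}: term orders are multiplicative, so $m\mid m'$ implies $m\preceq m'$; combined with the fact that every monomial of $\mathfrak{G}_\omega$ divides a top-degree monomial of $\mathfrak{G}_\omega$ --- which is Theorem~\ref{VexSides}(ii) in the vexillary case --- it suffices to compare $x^r$ against top-degree monomials only. Among those, I claim $r$ is maximal in \emph{every} term order with $x_1\prec\cdots\prec x_n$ because it dominates every top-degree exponent vector $e$ ``from the bottom'': $\sum_{i\ge j}e_i\le\sum_{i\ge j}r_i$ for all $j$, with equality at $j=1$. Such a domination lets one pass from $r$ to $e$ by repeatedly transferring one unit of exponent from a coordinate $a$ to a strictly smaller coordinate $b$, and each transfer strictly decreases the monomial when $x_1\prec\cdots\prec x_n$ (writing the depleted vector as $e'+\mathbf 1_a$, one compares $x^{e'}x_b\prec x^{e'}x_a$ since $b<a$). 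For \emph{realization}: the top-degree exponent vectors form a set connected by unit moves between adjacent rows --- in the vexillary case this is Corollary~\ref{topLocalMoves} and the remark after it, where the neutral move fixes the monomial and the other transfers one unit between rows $i-1$ and $i$ --- so the dominant vector $r$ is the top of that interval and is therefore realized by a top-degree marked BPD, namely the maximal-degree BPD with every up-elbow pushed as far up and to the left as possible (the opposite extreme from the configuration in the remark following Corollary~\ref{topLocalMoves}).

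Thus the whole theorem reduces to the \emph{domination inequality} $\sum_{i\ge j}e_i\le\sum_{i\ge j}r_i$ for the exponent vector $e$ of an arbitrary top-degree marked BPD; one might hope it holds for every exponent vector of $\mathfrak{G}_\omega$, which would bypass Theorem~\ref{VexSides}(ii), but the top-degree case is all that is needed. The natural route is to reinterpret $\sum_{i\ge j}r_i$ as the maximum, over $P\in Pipes(\omega)$, of the number of blank tiles plus up-elbow tiles of $(P,U(P))$ lying in rows $\ge j$, and then bound this quantity. In the vexillary case one can attempt this using Lemma~\ref{VexPipesOrdering} and the pipe-by-pipe count of Theorem~\ref{main}: build $P$ from $P_R(\omega)$ bottom pipe first; pipe $i$ then contributes exactly $r_{\omega^{-1}(i)}-c_{\omega^{-1}(i)}$ up-elbows, and one tracks how far up each up-elbow and each blank it displaces can travel before meeting a pipe that, by Lemma~\ref{VexPipesOrdering}(i), lies to its northwest. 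Matching this extremal distribution of tiles to the Rajchgot code --- defined purely through longest increasing subsequences of the one-line notation of $\omega$ --- is where I expect the real work to be; it is essentially the bookkeeping already carried out via the diagrams $D_i$ in Lemma~\ref{construction} and the constructions in Corollary~\ref{topLocalMoves}. For non-vexillary $\omega$ these reductions break down (BPDs may be non-reduced, and ``every monomial divides a top-degree monomial'' is only conjectural), so one must argue directly with K-theoretic droops, as Pechenik, Speyer, and Weigandt do.
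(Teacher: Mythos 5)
This statement is not proved in the paper at all: it is imported verbatim from Pechenik--Speyer--Weigandt (\cite{Speyerslides}) and used as known background. The closest the paper comes to proving it is Theorem \ref{leadingterms} specialized to $k=\deg(\mathfrak{G}_{\omega})$, which yields the vexillary case by a direct exchange argument on marked BPDs (swap unmarked up-elbows into $S$, un-droop, then build the extremal BPD pipe by pipe from the bottom with up-elbows pushed as far left as possible), combined implicitly with Theorem \ref{VexSides}(ii) to pass from the top homogeneous component to the whole polynomial. Your outer reduction is sound and is essentially the same skeleton: the sign computation showing no cancellation within a fixed degree is correct, multiplicativity of term orders correctly reduces the comparison to top-degree monomials once every monomial divides a top-degree one, and the ``transfer one unit of exponent downward'' argument correctly converts a partial-sum domination into maximality in every term order with $x_1\prec\cdots\prec x_n$.

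The problem is that the two load-bearing claims are exactly the content of the theorem, and you defer both. (i) \emph{Realization}: you assert that $r$ is the exponent vector of the maximal-degree BPD with up-elbows pushed to one extreme, but identifying that extreme's row-by-row tile count with the Rajchgot code is precisely the nontrivial combinatorial step (note also that the Remark after Corollary \ref{topLocalMoves} and the construction in Theorem \ref{leadingterms} disagree with your ``up and to the left'' description; the relevant extreme is built bottom pipe first with up-elbows pushed left, and which geometric extreme maximizes which term order is itself part of what must be checked). (ii) \emph{Domination}: the inequality $\sum_{i\ge j}e_i\le\sum_{i\ge j}r_i$ for every top-degree exponent vector is stated as ``where I expect the real work to be'' and never established. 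A proof that openly postpones its central inequality is an outline, not a proof. Finally, the theorem as stated holds for all $\omega\in S_n$, and your route (which needs Theorem \ref{VexSides}(ii), Lemma \ref{VexPipesOrdering}, and Corollary \ref{topLocalMoves}) is confined to the vexillary case by your own admission; for general $\omega$ one must work with non-reduced BPDs and K-theoretic droops, where the saturation property you invoke is only Conjecture \ref{conjsides}.
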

\noindent In particular, we know the leading terms of the lowest and highest degree homogeneous components of $\mathfrak{G}_{\omega}(\mathbf{x})$.  Furthermore, for the component of second lowest degree, the leading term is given as follows.
\begin{proposition}
\label{SPlusOneLeading}
For any permutation $\omega$, the leading term of $\mathfrak{G}_{\omega}^{(\ell(\omega)+1)}(\mathbf{x})$ is given by a scalar multiple of $(x_1^{c_1} \cdots x_n^{c_n}) x_i$ where $i$ is the largest possible index such that $c_i < r_i$.
\end{proposition}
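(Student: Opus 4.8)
The plan is to determine the lex‑leading monomial of $\mathfrak{G}_{\omega}^{(k)}(\mathbf{x})$ for $k=\ell(\omega)+1$ by sandwiching it. Write $c=(c_1,\dots,c_n)$ and $r=(r_1,\dots,r_n)$ for the Lehmer and Rajchgot codes, set $x^c=x_1^{c_1}\cdots x_n^{c_n}$, and let $i$ be the largest index with $c_i<r_i$; since $i$ is maximal, $r_l=c_l$ for all $l>i$, and $c+e_i$ (with $e_i$ the $i$th standard basis vector) is again a legitimate exponent vector because $c_i+1\le r_i\le n-i$. The bottom homogeneous component is $\mathfrak{G}_{\omega}^{(\ell(\omega))}=\mathfrak{S}_{\omega}$, whose lex‑leading monomial is $x^c$ by Proposition~\ref{SLeading}; the assertion is that the next component has lex‑leading monomial $x^{c+e_i}=x^c x_i$. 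First I would prove the upper bound — every monomial of $\mathfrak{G}_{\omega}^{(\ell(\omega)+1)}$ is $\le_{\mathrm{lex}}x^c x_i$ — and then that $x^c x_i$ genuinely occurs.

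For the upper bound, let $x^a$ be a monomial of $\mathfrak{G}_{\omega}^{(\ell(\omega)+1)}$. By Lemma~\ref{downbyone} we may write $x^a=x^b x_k$ with $x^b$ a monomial of $\mathfrak{S}_{\omega}$. I would invoke the standard strengthening of Proposition~\ref{SLeading} that the monomials of $\mathfrak{S}_{\omega}$ are reverse‑dominated by the code, i.e.\ $\sum_{l\ge p}b_l\le\sum_{l\ge p}c_l$ for every $p$; this is visible from the ladder–move (equivalently droop) structure, under which $P_R(\omega)$ maximizes, for each $p$, the number of blank tiles in rows $\ge p$. Hence $\sum_{l\ge p}a_l\le\sum_{l\ge p}c_l+1$, with equality forcing $k\ge p$. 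In addition Theorem~\ref{GLeading} gives $x^a\le_{\mathrm{lex}}x^r$ where $x^r=x_1^{r_1}\cdots x_n^{r_n}$. Now by the lexicographic criterion it suffices, assuming $a_l=(c+e_i)_l$ for all $l>p$, to show $a_p\le(c+e_i)_p$. If $p\le i$ then $\sum_{l>p}a_l=\sum_{l>p}c_l+[p<i]$, so the reverse‑dominance inequality gives $a_p\le c_p+[p=i]=(c+e_i)_p$. If $p>i$ then $(c+e_i)_l=c_l=r_l$ for all $l\ge p$, so $a_p>c_p$ together with $a_l=r_l$ for $l>p$ would give $x^a>_{\mathrm{lex}}x^r$, contradicting Theorem~\ref{GLeading}; thus $a_p\le c_p=(c+e_i)_p$. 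In all cases $x^a\le_{\mathrm{lex}}x^c x_i$.

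For the lower bound I would exhibit a marked bumpless pipe dream of $\omega$ whose monomial is $x^c x_i$, in the spirit of the construction in Lemma~\ref{construction}. Starting from $P_R(\omega)$ one performs a suitable single droop involving the down‑elbow at $(i,\omega(i))$ (the smallest admissible one, which creates exactly one up‑elbow, in some row $a$, and moves a single blank tile from row $a$ to row $i$; for a $2\times 2$ droop $a=i+1$). Then $(P^{*},U(P^{*}))\in MPipes(\omega)$ has monomial $x^{D(P^{*})}x_a=x^c x_i$ and contributes with sign $(-1)^{|D(P^{*})|+|S|-\ell(\omega)}=-1$. When $\omega$ is vexillary this already finishes the argument: every BPD of $\omega$ is reduced \cite{weigandt2020bumpless}, so each degree‑$(\ell(\omega)+1)$ term of $\mathfrak{G}_{\omega}$ comes from a reduced BPD carrying a single mark and hence has sign $-1$, so no cancellation can occur and the coefficient of $x^c x_i$ is negative.

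The step I expect to be the main obstacle is controlling cancellation for a general (non‑vexillary) $\omega$, where non‑reduced BPDs contribute with the opposite sign $+1$. I would attack it in one of two ways. One option is to show that a non‑reduced $P$ with $x^{D(P)}=x^c x_i$ and $|D(P)|=\ell(\omega)+1$, after the reverse $K$‑theoretic droop used in the proof of Lemma~\ref{downbyone}, yields a reduced BPD whose blank monomial would violate the reverse‑dominance bound for $\mathfrak{S}_{\omega}$ — so that no such $P$ exists and the previous paragraph applies verbatim. The other option is to invoke positivity of the Schubert expansion $\mathfrak{G}_{\omega}=\sum_v(-1)^{\ell(v)-\ell(\omega)}g_{\omega v}\mathfrak{S}_v$ with $g_{\omega v}\ge 0$: then $\mathfrak{G}_{\omega}^{(\ell(\omega)+1)}=-\sum_{\ell(v)=\ell(\omega)+1}g_{\omega v}\mathfrak{S}_v$ has no sign cancellation among leading monomials, so its lex‑leading monomial is $x^{c(v^{*})}$ for the unique appearing $v^{*}$ of lex‑maximal code, and it remains only to check that the permutation with code $c+e_i$ actually appears — the upper bound already identifies $c+e_i$ as the maximal possibility, so one just needs non‑vanishing of that single coefficient.
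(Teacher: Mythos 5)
Your argument takes a genuinely different route from the paper's, and it is not complete for general $\omega$, which is the scope of Proposition~\ref{SPlusOneLeading}.

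\textbf{What the paper does.} The paper does not split into an ``upper bound plus lower bound'' comparison; it directly classifies the marked BPD $(P,S)$ realizing the lex-leading monomial. Any $(P,S)$ of degree $\ell(\omega)+1$ either has $P$ reduced and $|S|=1$, or has $P$ obtained from $P_R(\omega)$ using exactly one $K$-theoretic droop and $|S|=0$. In the second case the paper observes that if you refrain from taking that final $K$-theoretic droop and instead mark the up-elbow in the lower-right corner of the droop region, you obtain a reduced marked BPD whose monomial is strictly lex-larger; this rules out non-reduced contributors to the leading term for \emph{every} permutation, and since the remaining contributors are reduced with $|S|=1$ they all carry the sign $-1$, so the paper never has to confront the cancellation problem you flag. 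It then argues (mirroring the vexillary Theorem~\ref{leadingterms} argument, which carries over once $P$ is reduced) that $P$ has a single up-elbow, hence $P$ is $P_R(\omega)$ followed by one droop, and that drooping the lowest droopable pipe gives the largest monomial, $x^c x_i$.

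\textbf{Where your proposal stands.} Your upper bound is an attractive alternative: reverse dominance of Schubert monomials by the Lehmer code, combined with Theorem~\ref{GLeading}, pins down the answer without any BPD analysis beyond Lemma~\ref{downbyone}. But two points need care. First, the reverse-dominance inequality $\sum_{l\ge p}b_l\le\sum_{l\ge p}c_l$ is strictly stronger than Proposition~\ref{SLeading} (the latter is only a lex statement); you would need to prove it from the droop structure, or cite it, rather than assert it in passing. Second, and more seriously, the non-vexillary case is left open, and that case is precisely what Proposition~\ref{SPlusOneLeading} claims. Of the two repair strategies you sketch, the first one (show no non-reduced $P$ with $|D(P)|=\ell(\omega)+1$ achieves the monomial $x^c x_i$) is essentially the paper's $K$-theoretic-droop swap; carried out, it would finish your argument. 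The second (via Lenart's alternating Schubert expansion) would need the nontrivial input that the specific permutation with code $c+e_i$ actually occurs with $g_{\omega v}>0$, which is not supplied. Finally, in the lower bound you should make explicit why a droop based at $(i,\omega(i))$ exists exactly when $c_i<r_i$; you implicitly lean on Theorem~\ref{main} (or, for general $\omega$, on Conjecture~\ref{conjElbows}), so the dependence should be stated.
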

\begin{proof}
Let $(P,S)$ be a marked BPD corresponding to the leading term of $\mathfrak{G}_{\omega}^{(\ell(\omega)+1)}(\mathbf{x})$, and recall that $P$ can be reached from $P_{R}(\omega)$ for $\omega$ by a series of droops and K-theoretic droops.  In particular, since the corresponding monomial has degree $\ell(\omega)+1$, it must be true that either this sequence contains no K-theoretic droop moves and $|S|=1$ or this sequence contains exactly one K-theoretic droop and $|S|=0$.  \\ \\
Consider first the case where the sequence contains a K-theoretic droop.  Note that performing a droop move on a BPD will always generate a monomial of the same degree which is smaller in our term order, so the K-theoretic droop must be the last move of our sequence.  Now, suppose that instead of performing one of the following K-theoretic droops, \\
\begin{center}
   \scalebox{1}{\begin{tikzpicture}
\draw[step=.5cm,gray,very thin] (0,0) grid (2.5,2);
\draw[step=.5cm,gray,very thin] (2.999,0) grid (5.5,2);
\draw[-stealth, black, thick] (2.6,1) -- (2.9,1);
\draw[black, thick] (.25,0) -- (.25,1.5);
\draw[black, thick] (.25,1.5) .. controls (.25,1.75) .. (.5,1.75);
\draw[black,thick] (.5,1.75) -- (2.5,1.75);
\draw[black,thick] (1.25,0) .. controls (1.25,.25) .. (1.5,.25) -- (2,.25) .. controls (2.25,.25) .. (2.25,.5) -- (2.25,2);

\draw[black,thick] (3.25,0) .. controls (3.25,.25) .. (3.5,.25);
\draw[black,thick] (3.5,.25) -- (5,.25);
\draw[black,thick] (5, .25) .. controls (5.25,.25) .. (5.25,.5);
\draw[black,thick] (5.25,.5)--(5.25,2);
\draw[black,thick] (4.25,0) -- (4.25,1.5) .. controls (4.25,1.75) .. (4.5,1.75) -- (5.5,1.75);
\end{tikzpicture}}

    \scalebox{1}{\begin{tikzpicture}
\draw[step=.5cm,gray,very thin] (0,0) grid (2.5,2);
\draw[step=.5cm,gray,very thin] (2.999,0) grid (5.5,2);
\draw[-stealth, black, thick] (2.6,1) -- (2.9,1);
\draw[black, thick] (.25,0) -- (.25,1.5);
\draw[black, thick] (.25,1.5) .. controls (.25,1.75) .. (.5,1.75);
\draw[black,thick] (.5,1.75) -- (2.5,1.75);
\draw[black,thick] (0,.25) -- (2,.25) .. controls (2.25,.25) .. (2.25,.5) .. controls (2.25,.75) .. (2.5,.75);

\draw[black,thick] (3,.25) -- (5,.25);
\draw[black,thick] (5, .25) .. controls (5.25,.25) .. (5.25,.5);
\draw[black,thick] (5.25,.5)--(5.25,1.5);
\draw[black,thick] (5.25,1.5).. controls (5.25,1.75) .. (5.5,1.75);
\draw[black,thick] (3.25,0) -- (3.25,.5) .. controls (3.25,.75) .. (3.5,.75) -- (5.5,.75);
\end{tikzpicture}}
\end{center} 
we add the up-elbow in the lower right corner to $S$.  This new BPD would correspond to a larger monomial in our term order, contradicting the assumption that $(P,S)$ corresponded to the leading term.  \\ \\
We can, thus, conclude that $P$ must be reduced and $S$ must have size $1$.  By the same argument as in the proof of Theorem \ref{leadingterms}, we can see that $P$ will have no up-elbows besides the one which is in $S$.  To get such a BPD corresponding to the largest possible monomial, we take the lowest pipe in $P_R$ which can be drooped and droop it to the closest open square.  This lowest pipe will be pipe $\omega(i)$ where $i$ is the largest index with $c_i < r_i$, so $(P,S)$ will correspond to $(x_1^{c_1} \cdots x_n^{c_n}) x_i$.
\end{proof} 
\noindent We conjecture the following description for the leading term of each intermediate degree and give a proof of this description in the vexillary case.
\begin{conjecture}
\label{conjleading}
For any permutation $\omega$, fix $k$ such that $deg(\mathfrak{S}_{\omega}(x)) \leq k \leq deg(\mathfrak{G}_{\omega}(x))$.  Let $j \in \{1,\ldots,n\}$ be the smallest possible index such that $\sum_{i=j}^n(r_i-c_i) \leq k-deg(\mathfrak{S}_{\omega}(x))$.  If we set $e_i=r_i$ for all $i \geq j$, $e_{j-1}=c_{j-1}+k-deg(\mathfrak{S}_{\omega}(x))-\sum_{i=j}^n(r_i-c_i)$, and $e_i=c_i$ for all $i<j-1$, then the leading term of $\mathfrak{G}_{\omega}^{(k)}(\mathbf{x})$ is a scalar multiple of $x_1^{e_1} \cdots x_n^{e_n}$.
\end{conjecture}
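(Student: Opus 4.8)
The argument splits into showing (1) that $x^{\mathbf{e}}:=x_1^{e_1}\cdots x_n^{e_n}$ genuinely appears in $\mathfrak{G}_\omega^{(k)}(\mathbf{x})$, and (2) that it is maximal among the monomials of $\mathfrak{G}_\omega^{(k)}(\mathbf{x})$ in the order comparing exponent vectors first by the exponent of $x_n$, then of $x_{n-1}$, and so on (this is what the ``leading term for $x_1<\cdots<x_n$'' means inside a fixed degree). Throughout I use that a vexillary $\omega$ has no non-reduced BPDs \cite{weigandt2020bumpless}, so $|D(P)|=\ell(\omega)$ for every $P\in Pipes(\omega)$; consequently a marked BPD contributes to $\mathfrak{G}_\omega^{(k)}$ exactly when $|S|=k-\ell(\omega)$, all such contributions carry the same sign $(-1)^{k-\ell(\omega)}$, there is no cancellation inside this homogeneous component, and its set of monomials is precisely $\{\prod_{(i,j)\in D(P)\cup S}x_i : S\subseteq U(P),\ |S|=k-\ell(\omega)\}$.

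Part (1) I would deduce from Theorem \ref{VexBetween}. Its two ``endpoints'' $x_1^{c_1}\cdots x_n^{c_n}$ (the leading term of $\mathfrak{S}_\omega=\mathfrak{G}_\omega^{(\ell(\omega))}$, Proposition \ref{SLeading}) and $x_1^{r_1}\cdots x_n^{r_n}$ (the leading term of $\mathfrak{G}_\omega$, Theorem \ref{GLeading}) both lie in the support of $\mathfrak{G}_\omega$. From the definition of $j$ one checks $c_i\le e_i\le r_i$ for all $i$: for $i\ge j$ this is $e_i=r_i$; for $i<j-1$ it is $e_i=c_i$; for $i=j-1$, $e_{j-1}\ge c_{j-1}$ is nonnegativity of $k-\deg(\mathfrak{S}_\omega)-\sum_{i\ge j}(r_i-c_i)$ while $e_{j-1}<r_{j-1}$ is exactly the minimality of $j$, namely $\sum_{i\ge j-1}(r_i-c_i)>k-\deg(\mathfrak{S}_\omega)$ (the degenerate case $j=1$, which forces $\mathbf{e}=\mathbf{r}$ and $k=\deg(\mathfrak{G}_\omega)$, is immediate from Theorem \ref{GLeading}). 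Hence $x_1^{c_1}\cdots x_n^{c_n}\mid x^{\mathbf{e}}\mid x_1^{r_1}\cdots x_n^{r_n}$, and $\sum_i e_i=k$ by a one-line sum, so Theorem \ref{VexBetween} puts $x^{\mathbf{e}}$ in the support of $\mathfrak{G}_\omega^{(k)}$.

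For part (2), take $x^{\mathbf{f}}$ in the support of $\mathfrak{G}_\omega^{(k)}$ with $f_i=e_i$ for all $i>t$ and $f_t\ne e_t$; I must show $f_t<e_t$. Two inequalities do the work. The easy one (a ``lower bound''): for any $P\in Pipes(\omega)$ and any $s$, the number of blank tiles of $P$ in rows $\le s$ is at least $\sum_{i\le s}c_i$, because it equals $\sum_{i\le s}c_i$ in $P_R(\omega)$ and an ordinary droop relocates a blank tile only to a strictly lower-indexed row, so this count cannot decrease along the droop sequence producing $P$; hence $\sum_{i\le s}f_i\ge\sum_{i\le s}c_i$ for every support monomial. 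The decisive one (an ``upper bound''): $\sum_{i\ge s}f_i\le\sum_{i\ge s}r_i$ for every $s$ and every support monomial. Granting both, all three cases follow by arithmetic using $\sum_i f_i=k=\sum_i e_i$: if $t\ge j$, the upper bound at $s=t$ together with $f_i=r_i$ for $i>t$ gives $f_t\le r_t=e_t$; if $t=j-1$, the lower bound at $s=j-2$ gives $f_{j-1}=k-\sum_{i\ge j}r_i-\sum_{i<j-1}f_i\le k-\sum_{i\ge j}r_i-\sum_{i<j-1}c_i=e_{j-1}$; if $t<j-1$, then $f_i=e_i$ for $i>t$ forces $\sum_{i\le t}f_i=\sum_{i\le t}c_i$, and subtracting the lower bound at $s=t-1$ gives $f_t\le c_t=e_t$. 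In every case the strict inequality follows from $f_t\ne e_t$.

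The remaining task, and the one I expect to be hardest, is the upper bound $\sum_{i\ge s}f_i\le\sum_{i\ge s}r_i$. I would first invoke Theorem \ref{VexSides}(ii) to reduce to the case that $x^{\mathbf{f}}$ has maximal degree, since every support monomial divides a maximal-degree support monomial. For maximal-degree marked BPDs, Theorem \ref{main}, Proposition \ref{sameRowCol}, and Corollary \ref{topLocalMoves} with the remark following it describe the whole family: they are connected by moves that either fix the monomial or transfer one unit of exponent between $x_{i-1}$ and $x_i$, and the extreme configuration, with the up-elbows pushed as far up and to the left as possible, realizes $x_1^{r_1}\cdots x_n^{r_n}$. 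The crux is to upgrade ``$x^{\mathbf{r}}$ is the lexicographic extreme'' to ``$x^{\mathbf{r}}$ simultaneously maximizes every partial sum $\sum_{i\ge s}(\cdot)$'', by checking that each connecting move moves the partial sums $\sum_{i\ge s}(\cdot)$ monotonically toward their values at $x^{\mathbf{r}}$. An alternative I would pursue in parallel, possibly cleaner, is to realize the bottom $n-t+1$ rows of any marked BPD for $\omega$ as a marked BPD for the flattening of the word $\omega(t)\,\omega(t+1)\cdots\omega(n)$ --- again vexillary, with $\sum_{i\ge t}r_i$ as the degree of its Grothendieck polynomial --- which yields $\sum_{i\ge t}f_i\le\sum_{i\ge t}r_i$ at once, the only point to verify being that this row-restriction is well defined.
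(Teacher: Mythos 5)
Your overall plan is sensible, and part (1) is correct and is a genuinely different route from the paper: you sandwich $x^{\mathbf e}$ between $x^{\mathbf c}$ and $x^{\mathbf r}$ and invoke Theorem \ref{VexBetween} together with the easy verification $c_i\le e_i\le r_i$ and $\sum_i e_i=k$, whereas the paper produces an explicit marked BPD by a greedy droop construction. Your arithmetic reductions in part (2) are also correct as written (only the case $t\ge j$ actually needs the upper bound; the cases $t=j-1$ and $t<j-1$ follow from the lower bound alone), and your lower bound $\sum_{i\le s}f_i\ge\sum_{i\le s}c_i$ is a clean observation about droops moving blanks to strictly lower rows.

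The gap is exactly where you say you expect it to be: the inequality $\sum_{i\ge s}f_i\le\sum_{i\ge s}r_i$ is stated but never proved, and neither of the two directions you sketch is routine. For the first (monotonicity of the partial sums along the connecting moves of Corollary \ref{topLocalMoves}): those moves transfer a unit of exponent between adjacent rows $i-1$ and $i$, so a single move can either increase or decrease a fixed partial sum $\sum_{i\ge s}(\cdot)$. To make the monotonicity argument work you would have to show that on the graph of top-degree BPDs there is, from every vertex to the one realizing $x^{\mathbf r}$, a path along which each partial sum is weakly increasing simultaneously for every $s$; connectivity alone does not give this, and Proposition \ref{sameRowCol} and the Remark after Corollary \ref{topLocalMoves} only tell you where the lexicographic extremes for $x_1>\cdots>x_n$ sit, not that $x^{\mathbf r}$ dominates every partial sum. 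For the second (row restriction): deleting the top $s-1$ rows of a BPD for $\omega$ does not obviously produce a BPD for the flattening of $\omega(s)\cdots\omega(n)$, because pipes that exit in rows $<s$ still pass through rows $\ge s$ and would have to exit through the top edge of the truncated grid, which is not allowed in the BPD model; some nontrivial surgery or a different restriction is needed before the degree bound for the smaller permutation can be applied. I also note that a single droop can move several blanks and change the partial sums by more than one, and that a pipe $\omega(m)$ with $m<s$ can acquire up-elbows in rows $\ge s$ after its first droop, so a naive droop-counting argument does not immediately close the gap either. The paper sidesteps this issue entirely: it first shows that the leading marked BPD must be fully marked, then constructs the extremal BPD greedily from the bottom pipe up, reading off $\mathbf{e}$ directly from that construction. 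So while your existence argument is a nice and arguably cleaner alternative, part (2) is incomplete until the upper bound is actually established.
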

\begin{theorem2}
Suppose that $\omega$ is vexillary, and fix $k$ such that $deg(\mathfrak{S}_{\omega}(x)) \leq k \leq deg(\mathfrak{G}_{\omega}(x))$.  Let $j \in \{1,\ldots,n\}$ be the smallest possible index such that $\sum_{i=j}^n(r_i-c_i) \leq k-deg(\mathfrak{S}_{\omega}(x))$.  If we set $e_i=r_i$ for all $i \geq j$, $e_{j-1}=c_{j-1}+k-deg(\mathfrak{S}_{\omega}(x))-\sum_{i=j}^n(r_i-c_i)$, and $e_i=c_i$ for all $i<j-1$, then the leading term of $\mathfrak{G}_{\omega}^{(k)}(\mathbf{x})$ is a scalar multiple of $x_1^{e_1} \cdots x_n^{e_n}$.
\end{theorem2}
\begin{proof}
Let $(P,S)$ be a marked BPD corresponding to the leading term of $\mathfrak{G}_{\omega}^{(k)}(\mathbf{x})$, and note that $S$ will have size $k-\ell(\omega)$ since $\ell(\omega)=\sum_i c_i$ is the degree of $\mathfrak{S}_{\omega}(\mathbf{x})$.  First, suppose that $S$ is a proper subset of $U(P)$ and choose $(i,j) \in U(P)-S$ such that $i$ is as small as possible.  If $S$ contains some $(i',j')$ such that $i' < i$, then $(P,(S-(i',j')) \cup \{(i,j)\})$ corresponds to a larger monomial in $\mathfrak{G}_{\omega}^{(k)}$ giving a contradiction.  Similarly, if $P$ contains no up-elbows above row $i$, then un-drooping the elbow in $(i,j)$ gives a marked BPD corresponding to a larger monomial, providing our contradiction.  We, therefore, know that the leading monomial must correspond to a marked BPD of the form $(P,U(P))$.  \\ \\
We now know that $P$ must have exactly $k-\ell(\omega)$ up-elbows and that pipe $\omega(i)$ can have at most $r_i-c_i$ up-elbows, so we can see that for $(P,U(P))$ to correspond to the leading term, $P$ must be constructed as follows.  For the largest $i$ such that $r_i > c_i$, we droop pipe $\omega(i)$ so that it has $min\{r_i-c_i,k-\ell(\omega)\}$ up-elbows pushed as far left as possible.  Now, set $r$ to be the difference between $k-\ell(\omega)$ and the total number of up-elbows in our BPD.  If $r=0$, we are done.  Otherwise, consider the next largest $i'$ such that $r_{i'} > c_{i'}$, and add $min\{r_{i'}-c_{i'},r\}$ up-elbows to pipe $\omega(i')$ (again, pushed as far left as possible), repeating this process until $r=0$.  By construction, the resulting BPD will correspond to $x_1^{e_1} \cdots x_n^{e_n}$, as described above, so we can conclude that this is the leading monomial of $\mathfrak{G}_{\omega}^{(k)}(\mathbf{x})$ (up to multiplication by a scalar).
\end{proof}
\section*{Acknowledgements}
The author would like to thank Karola M\'esz\'aros, Avery St.~Dizier, Allen Knutson, Ed Swartz, and Colleen Robichaux for helpful discussions and feedback on earlier drafts of this work.  
\bibliographystyle{amsplain}
\bibliography{Vexillary_BPDs}
\end{document}